\def\ocirc#1{\ifmmode\setbox0=\hbox{$#1$}\dimen0=\ht0
    \advance\dimen0 by1pt\rlap{\hbox to\wd0{\hss\raise\dimen0
    \hbox{\hskip.2em$\scriptscriptstyle\circ$}\hss}}#1\else
    {\accent"17 #1}\fi}
\numberwithin{equation}{section}
\newtheorem{theorem}{Theorem}[section]
\newtheorem{lemma}[theorem]{Lemma}
\newtheorem{definition}[theorem]{Definition}
\newtheorem{remark}[theorem]{Remark}
\newcommand{\R}{\mathbb{R}}
\newcommand{\Z}{\mathbb{Z}}
\newcommand{\N}{\mathbb{N}}
\newcommand{\dt}{\,dt}
\renewcommand{\dt}{\partial_t}
\renewcommand{\O}{\Omega}
\newcommand{\Oi}{\Omega_i^{\varepsilon}}
\newcommand{\Oe}{\Omega_e^{\varepsilon}}
\newcommand{\Oj}{\Omega_j^{\varepsilon}}
\newcommand{\Ge}{\Gamma^{\varepsilon}}
\newcommand{\e}{\varepsilon}
\newcommand{\p}{\varphi}
\newcommand{\G}{\Gamma}
\newcommand{\Tau}{\mathcal{T}}
\newcommand{\weakstar}{\overset{\star}\rightharpoonup}
\newcommand{\weak}{\rightharpoonup}
\newcommand{\seq}[1]{\left\{#1\right\}}
\newcommand{\abs}[1]{\left|#1\right|}
\newcommand{\norm}[1]{\left\|#1\right\|}
\DeclareMathOperator*{\Div}{div}
\title[Two-scale homogenization of the bidomain model]
{The cardiac bidomain model and homogenization}
\author[E. Grandelius]{E. Grandelius}
\address[Erik Grandelius]
{\newline Department of mathematics
\newline University of Oslo
\newline P.O. Box 1053, Blindern
\newline N--0316 Oslo, Norway} 
\email[]{erikgran@math.uio.no}
\author[K. H. Karlsen]{K. H. Karlsen}
\address[Kenneth Hvistendahl Karlsen]
{\newline Department of mathematics
\newline University of Oslo
\newline P.O. Box 1053,  Blindern
\newline N--0316 Oslo, Norway} 
\email[]{kennethkarlsen@me.com}
\subjclass[2010]{Primary: 35K57, 35B27; Secondary: 35K65, 92C30}
\keywords{Bidomain equations, cardiac electric field, reaction-diffusion system, 
degenerate, weak solution, homogenization, two-scale convergence, unfolding operator}
\thanks{This work was supported by the Research Council 
of Norway (project 250674/F20).}
\date{\today}
\begin{document}

\begin{abstract} 
We provide a rather simple proof of a homogenization 
result for the bidomain model of cardiac electrophysiology. 
Departing from a microscopic cellular model, we 
apply the theory of two-scale convergence 
to derive the bidomain model. To allow for some relevant 
nonlinear membrane models, we make essential 
use of the boundary unfolding operator.
There are several complications preventing 
the application of standard homogenization results, including the 
degenerate temporal structure of the bidomain equations 
and a nonlinear dynamic boundary condition 
on an oscillating surface. 
\end{abstract}

\maketitle

\tableofcontents

\section{Introduction}\label{sec:Introduction}

The bidomain model \cite{Tung78,ColliBook,Sundnes} is widely used 
as a quantitative description of the electric activity in cardiac tissue. 
The relevant unknowns are the intracellular ($u_i$) and extracellular ($u_e$) 
potentials, along with the so-called transmembrane potential ($v:=u_i-u_e$). 
In this model, the intra- and extracellular spaces are 
considered as two separate homogeneous domains superimposed 
on the cardiac domain. The two domains are separated by 
the cell membrane creating a discontinuity surface for 
the cardiac potential. Conduction of electrical signals in cardiac tissue relies 
on the flow of ions through channels in the cell membrane. 
In the bidomain model, the celebrated Hodgkin-Huxley \cite{Hodgkin} 
framework is used to dynamically couple the intra- and extracellular potentials 
through voltage gated ionic channels. 

The bidomain model can be viewed as a PDE system consisting of two 
degenerate reaction-diffusion equations involving the unknowns $u_i,u_e,v$ and 
two conductivity tensors $\sigma_i,\sigma_e$. These equations are supplemented 
by a nonlinear ODE system for the dynamics of the ion channels. 
The bidomain model is often derived heuristically by interpreting 
$\sigma_i,\sigma_e$ as some sort of ``average" conductivities,  applying 
Ohm's electrical conduction law and the continuity equation (conservation 
of electrical charge) to the intracellular and extracellular 
domains \cite{ColliBook,Sundnes}.

Starting from a more accurate microscopic (cell-level) model of 
cardiac tissue, with the heterogeneity of the underlying cellular 
geometry represented in great detail, it is possible to heuristically 
derive the bidomain model (tissue-level) using the 
multiple scales method of homogenization. 
This derivation was first carried out in \cite{Neu:1993aa}. 
It should be noted that the microscopic model is in 
general too complex to allow for full organ simulations, although 
there have been some very recent efforts in that direction \cite{Tveito17}. 
The complexity of cell-level models, which themselves 
can be heuristically derived from the Poisson-Nernst-Planck equations 
\cite{Richardson:2009aa}, motivates the search for 
simpler homogenized (macroscopic) models. 
The work \cite{Neu:1993aa} assumes, as we do herein, that 
cardiac tissue can be viewed as a uniformly oriented periodic 
assembly of cells (see also \cite{Colli,Henriquez}). 
There have been some attempts to remove this 
assumption. We refer to \cite{Keener:1998ab,Keener:1996aa,Richardson:2011aa} 
for extensions to  somewhat more realistic tissue geometries.

Despite the widespread use of the  bidomain model, there are 
few mathematical rigorous derivations of the model from a microscopic 
description of cardiac tissue. From a mathematical point of view, 
rigorous homogenization is often linked to the study of the 
asymptotic behavior (convergence) of solutions to PDEs with oscillating 
coefficients. In the literature several approaches have 
been developed to handle this type of problem, like Tartar's method 
of oscillating test functions, $\Gamma$-convergence, 
two-scale convergence, and the unfolding method. We refer to \cite{Donato} 
for an accessible introduction to the mathematics of homogenization 
and for an overview of the different homogenization methods. 

We are aware of two earlier works \cite{Amar2013,Pennacchio2005} 
containing rigorous homogenization results for the 
bidomain model (but see \cite{Donato:2015aa,Donato:2011aa,Yang:2014aa}
for examples of elliptic and parabolic equations on``two-component" domains). 
With a fairly advanced proof involving $\Gamma$-convergence, the 
De Giorgi ``minimizing movement" approach, time-discretization, 
variational problems, and two limit procedures, the homogenization 
result in \cite{Pennacchio2005} covers the generalized 
FitzHugh-Nagumo ionic model \cite{FitzHugh1955}. 
The proof of the result in \cite{Amar2013} is more basic in the 
sense that it employs only two-scale convergence arguments, but 
it handles only a restricted class of ionic models.  

We mention that there are several complications preventing 
the application of standard homogenization results 
(for elliptic/parabolic equations) to the bidomain equations, including 
its degenerate structure (seen at the tissue-level), 
resulting from differing anisotropies of the intra- 
and extracellular spaces, 
and the highly nonlinear, oscillating dynamic 
boundary condition (seen at the cell-level).

The main contribution of our paper is to provide a simple homogenization 
proof that can handle some relevant nonlinear 
membrane models (the generalized FitzHugh-Nagumo model), 
relying only on basic two-scale convergence techniques. 
We now explain our contribution in more detail. 
The point of departure is the following microscopic 
model \cite{ColliBook,Colli,Henriquez,Veneroni} for the 
electric activity in cardiac tissue:
\begin{equation}\label{micro}
	\begin{split}
		& -\Div \left( \sigma_i^\e \nabla u_i^{\e}\right) = s_i^{\e}  
		\quad  \mbox{in} \; (0,T)\times \Oi,
		\\ & 
		-\Div \left( \sigma_e^\e \nabla u_e^{\e}\right) = s_e^{\e}  
		\quad \mbox{in} \; (0,T)\times \Oe,
		\\ & 
		\e \left( \partial_t {v^{\e}} + I(v^{\e},w^{\e})\right)
		=-\nu \cdot \sigma_{i}^{\e} \nabla u_i^{\e}
		\quad \mbox{on} \; (0,T)\times \Ge,
		\\ & 
		\e \left( \partial_t {v^{\e}} + I(v^{\e},w^{\e})\right)
		=-\nu \cdot \sigma_{e}^{\e} \nabla u_e^{\e}
		\quad \mbox{on} \; (0,T)\times \Ge,
		\\ & 
		\dt w^{\e} = H(v^{\e},w^{\e}) 
		\quad \text{on}  \; (0,T)\times\Ge,
	\end{split}	
\end{equation}
where $\nu$ denotes the unit normal pointing out 
of $\Oi$ (and into $\Oe$). Cardiac tissue consists 
of an assembly of elongated cylindrical-shaped cells 
coupled together (end-to-end and side-to-side) to 
provide intercellular communication. The entire cardiac 
domain $\Omega\subset \R^3$ is viewed as a ``two-component" domain 
and split into two $\e$-periodic open sets $\Oi$, $\Oe$ 
corresponding to the intra- and extracellular spaces. 
The sets $\Oi,\Oe$, which are assumed to be disjoint and connected, are 
separated by an $\e$-periodic surface $\Ge$ representing the cell membrane, 
so that $\Omega=\Oi\cup \Oe\cup \Ge$. The main geometrical assumption 
is that the intra- and extracellular domains are $\e$-dilations of some reference 
cells $Y_i,Y_e\subset Y:=[0,1]^3$, periodically 
repeated over $\R^3$. Although our results are valid 
for general Lipschitz domains, for simplicity of presentation, we 
assume that the entire cardiac domain $\O$ is the open cube
\begin{equation}\label{def:domain}
	\O=(0,1)\times(0,1)\times (0,1).
\end{equation}

In \eqref{micro}, $\sigma_j^{\e}$ is the conductivity tensor 
and $s_j^{\e}$ is the stimulation current, relative to $\Oj$ for $j=i,e$.
The functions $s_i^\e,s_e^\e$ are assumed to be at least bounded in $L^2$, 
independently of $\e$. As usual in homogenization theory, the
conductivity tensors $\sigma_i^{\e}, \sigma_e^{\e}$ are assumed to have the form 
$$
\sigma_j^\e(x)=\sigma_j\left(x,\frac{x}{\e}\right), \qquad j=i,e,
$$
where $\sigma_j=\sigma_j(x,y)$ satisfies the usual conditions 
of uniform ellipticity and periodicity (in $y$).  
Despite the fact that the inhomogeneities of the domains impose 
$\e$-oscillations in the conductivity tensors (via gap junctions), 
the main source of inhomogeneity in the microscopic 
model is not the conductivities $\sigma_i^\e$ and $\sigma_e^\e$, but 
the domains $\Oi$ and $\Oe$ themselves. We allow for 
inhomogeneous and oscillating conductivities for the sake of generality.

We denote by $u_j^{\e}$ the electric potential in $\Oj$ ($j=i,e$).
On $\Ge$, $v^{\e} := u_i^{\e}-u_e^{\e}$ is the transmembrane 
potential and $I(v^{\e},w^{\e})$ is the ionic current depending 
on $v^{\e}$ and a gating variable $w^{\e}$. 
The left-hand side of the third and fourth equations in \eqref{micro}  
describes the current across the membrane as having a capacitive 
component, depending on the time derivative of the transmembrane potential, 
and a nonlinear ionic component $I$ corresponding to the 
chosen membrane model. In this article we consider 
the generalized FitzHugh-Nagumo model \cite{FitzHugh1955}. 
We choose to focus on this membrane model for definiteness, but our 
arguments can be adapted to many other models satisfying 
reasonable technical assumptions 
\cite{Boulakia2008,Bourgault,ColliBook,Sundnes,Veneroni,Veneroni:2009aa}.

For each fixed $\e>0$, the functions $\sigma_j^{\e}, s_j^{\e}, I,H$ in \eqref{micro} 
are given and we wish to solve for $(u_i^\e,u_e^\e,v^\e,w^\e)$. 
To this end, we must augment the system \eqref{micro} 
with initial conditions for $v^\e, w^\e$ and Neumann-type boundary 
conditions for $u_i^\e, u_e^\e$ (ensuring 
no current flow out of the heart):
\begin{equation}\label{eq:ib-cond}
	\begin{split}
		&v^{\e}|_{t=0}=v_0^\e \;\;  \text{in $\O$}, \quad
		w^{\e}|_{t=0}=w_0^\e\;\;  \text{in $\O$}, 
		\\ &
		n \cdot\sigma_j  \nabla u_j^{\e}  = 0  
		\;\; 
		\text{on $(0,T)\times \left(\partial \O 
		\cap \partial \Oj\right)$}, \; j=i,e,
	\end{split}
\end{equation}
where $n$ is the outward unit normal to $\O$. 
It is proved in \cite{Colli,Veneroni} that the microscopic 
bidomain model \eqref{micro},  \eqref{eq:ib-cond} possesses 
a unique weak solution. This solution satisfies a series of a priori estimates.
For us it is essential to know how these estimates 
depend on the parameter $\e$. We will therefore outline 
a proof of these estimates.

The dimensionless number $\e$ is a small positive number 
representing the ratio of the microscopic and macroscopic 
scales, that is, considering $\O$ as fixed, it is proportional to 
the cell diameter. The goal of homogenization is to investigate 
the limit of a sequence of solutions 
$\seq{\left(u^{\e}_i,u_e^{\e},v^{\e},w^{\e}\right)}_{\e>0}$ 
to \eqref{micro}, \eqref{eq:ib-cond}. By the 
multiple scales method \cite{Donato,Colli,Henriquez}, 
the electric potentials $u^{\e}_i,u_e^{\e},v^{\e}$ and 
the state variable $w^\e$ exhibit the following asymptotic 
expansions in powers of the parameter $\e$:
\begin{align*}
	u_j^\e(t,x,y) & =u_j(t,x,y) + \e u_j^{(1)}(t,x,y)
	+\e^2 u_j^{(2)}(t,x,y)
	+\cdots \quad (j=i,e),
	\\ 
	v^\e(t,x,y) & =v(t,x,y) + \e v^{(1)}(t,x,y)
	+\e^2 v^{(2)}(t,x,y)
	+\cdots,
	\\  
	w^\e(t,x,y) & =w(t,x,y) + \e w^{(1)}(t,x,y)
	+\e^2 w^{(2)}(t,x,y)
	+\cdots,
\end{align*}
where $y= x/\e$ denotes the microscopic variable, and each term in 
the expansions is a function of both the slow (macroscopic) 
variable $x$ and the fast (microscopic) variable $y$, periodic 
in $y$. Substituting the above expansions into \eqref{micro}, 
and equating all terms of the same orders in powers of $\e$, 
we obtain after some routine arguments that 
the zero order terms $u_i,u_e,v,w$ are independent of the 
fast variable $y$ and satisfy the (macroscopic) 
bidomain model \cite{Colli,Henriquez,Pennacchio2005}
\begin{equation}\label{macro}
	\begin{cases}
		|\G|\dt v -\Div \left( M_i \nabla u_i \right)+ |\G|I(v,w) 
		= |Y_i|s_{i}, & \quad \text{in $(0,T)\times \O$}, \\
		|\G|\dt v + \Div  \left( M_e \nabla u_e \right)  + |\G|I(v,w) 
		= - |Y_e|s_{e}, & \quad \text{in $(0,T)\times \O$},\\ 
		\dt w = H(v,w), & \quad \text{in $(0,T)\times \O$},
	\end{cases}
\end{equation}
where the homogenized conductivity tensors 
$M_i(x), M_e(x)$ are given by 
\begin{equation}\label{Mj}
	M_j(x)=\int_{Y_j} \sigma_j(x,y)\left( I
	+ \nabla_y \chi_j(x,y) \right) \, dy, 
	\qquad j=i,e,
\end{equation}
and the $y$-periodic (vector-valued) 
function $\chi_j=\chi_j(x,y)$ solves the cell problem
\begin{equation}\label{chi}
	\begin{cases}
		-\Div_y \left( \sigma_j \nabla_y \chi_j \right) 
		= -\Div_y \sigma_j, \quad &\mbox{in } \O \times Y_j, \\
		\nu \cdot \sigma_j \nabla_y \chi_j = \nu \cdot \sigma_j,  
		\quad &\mbox{on } \O\times \Gamma.
	\end{cases}
\end{equation}
Note that the effective potentials $u_i, u_e$ in \eqref{macro} are 
defined at every point of $\O$, while in the microscopic model they live 
on disjoint sets $\Oi, \Oe$. In \eqref{macro}, \eqref{Mj}, \eqref{chi} 
the sets $Y_i,Y_e$ are the intra and extracellular 
spaces within the reference unit cell $Y$, separated by the cell 
membrane $\G$ (see Section \ref{sec:preliminaries} for details). 
It is worth noting that the bidomain model is often stated in terms of 
the ``geometric" parameter $\chi=\frac{\abs{\Gamma}}{\abs{Y}}=\abs{\Gamma}$ 
representing the surface-to-volume ratio of the cardiac cells. 

Regarding the existence and uniqueness of properly 
defined solutions, standard theory for parabolic-elliptic 
systems does not apply naturally to the bidomain model \eqref{macro}.
A number of works \cite{Karlsen,Boulakia2008,Bourgault,Colli,Veneroni:2009aa} 
have recently provided well-posedness results for \eqref{macro}, applying differing 
solution concepts and technical frameworks. 

As alluded to earlier, we will provide a rigorous derivation of the homogenized 
system \eqref{macro}, \eqref{Mj}, \eqref{chi} based on the theory of two-scale 
convergence (see \cite{Nguetseng} and \cite{Allaire,Allaire2,Lukkassen:2002aa}).
This result is not covered by standard parabolic homogenization theory. 
A complication is the nonlinear dynamic boundary condition (posed on an underlying 
oscillating surface), which makes it difficult to pass to the limit in 
\eqref{micro} as $\e\to 0$.  The aim is to prove that a sequence  
$\seq{u_i^{\e},u_e^{\e},v^{\e},w^{\e}}_{\e>0}$ of solutions to the 
microscopic problem two-scale converges to the solution of 
the bidomain model \eqref{macro}. However, two-scale convergence is 
not ``strong enough" to justify passing to the limit in the 
nonlinear boundary condition. To handle this difficulty we 
use the boundary unfolding operator \cite{Cioranescu}, establishing 
strong convergence of $\Tau_{\e}^b(v^{\e})$ in $L^2((0,T)\times\O\times \G)$, 
where $\Tau_{\e}^b$ denotes the boundary unfolding operator. 
The boundary unfolding operator makes our proof flexible 
enough to handle a range of membrane models, 
exemplified by the generalized FitzHugh-Nagumo model.

Unfolding operators, presented 
and carefully analyzed in \cite{Cioranescu:2008aa,Cioranescu}, 
facilitate elementary proofs of classical homogenization 
results on fixed as well as perforated domains/surfaces. 
An unfolding operator $\Tau_\e$ maps a function $v(x)$ defined on 
an oscillating domain/surface to a higher dimensional function $\Tau_{\e}(v)(x,y)$ 
on a fixed domain, to which one can apply standard convergence 
theorems in fixed $L^p$ spaces. Reflecting the ``two-component" nature of 
the cardiac domain, it makes sense to use two 
unfolding operators $\Tau_\e^{i,e}$, linked to the intra- and extracellular 
domains $\O^\e_{i,e}$. In this paper, however, we mainly unfold functions 
defined on the cell membrane, utilizing the boundary unfolding operator $\Tau_\e^b$. 

For somewhat similar unfolding of ``two-component" domains separated by a 
periodic boundary, see \cite{Donato:2015aa,Donato:2011aa,Yang:2014aa}. 
For other relevant works that combine two-scale convergence and unfolding 
methods, we refer to \cite{Neuss,Gahn:2016aa,Graf:2014aa,Marciniak-Czochra:2008aa,Neuss-Radu:2007aa}. 
Among these, our work borrows ideas mostly 
from \cite{Neuss,Gahn:2016aa,Neuss-Radu:2007aa}. 
      
The remaining part of the paper is organized as follows: 
In Section \ref{sec:preliminaries}, we collect relevant 
functional spaces and analysis results. Moreover, we gather 
definitions and tools linked to two-scale convergence and unfolding operators. 
In Section \ref{sec:wellposedness}, we define precisely what is meant by a 
weak solution of the microscopic problem \eqref{micro}, state a 
well-posedness result, and establish several ``$\e$-independent" a priori estimates.  
The main homogenization result is stated and proved in Section \ref{sec:convergence}.

\section{Preliminaries}\label{sec:preliminaries}

\subsection{Some functional spaces and tools}\label{sec:notation}
For a general review of integer and fractional order 
Sobolev spaces (on Lipschitz domains) and relevant analysis tools, 
see \cite[Chaps.~2 \& 3]{Boyer:2012aa}
and \cite[Chap.~3]{McLean:2000aa}. For relevant background material 
on mathematical homogenization, we refer to \cite{Donato}. 

Let $\O \subset \R^3$ be a bounded open set with 
Lipschitz boundary. We denote by $C_0^{\infty}(\O)$ the 
(infinitely) smooth functions with compact support in $\O$. 
The space of smooth $Y$-periodic functions is 
denoted by $C^{\infty}_{\mathrm{per}}(Y)$. The closure of this 
space under the norm $\|\nabla(\cdot)\|_{L^2(Y)}$ 
is denoted by $H_{\mathrm{per}}^1(Y)$. We write $H^s$ for the 
$L^2$-based Sobolev spaces $W^{s,2}$ ($s\in (0,1]$). 

We make use of Sobolev spaces on surfaces, as defined 
for example in \cite[p.~34]{Lions:1972aa} 
and \cite[p.~96]{McLean:2000aa}. Specifically, we use the 
(Hilbert) space $H^{1/2}(\G)$, for a two-dimensional 
Lipschitz surface $\G \subset \O$, equipped with the norm 
\begin{equation*}
	\norm{u}^2_{H^{1/2}(\G)}
	=\norm{u}_{L^2(\G)}^2+\abs{u}_{H_0^{1/2}(\G)}^2,
\end{equation*}
where 
$$
\abs{u}_{H_0^{1/2}}^2=\int_{\G}\int_{\G}
\frac{|u(x)-u(x')|^2}{|x-x'|^3}\, dS(x)\, dS(x'),
$$
and $dS$ is the two-dimensional surface measure. We define 
the dual space of $H^{1/2}(\G)$ as $H^{-1/2}(\G):=(H^{1/2}(\G))^*$, equipped 
with the norm of dual spaces 
$$
\norm{u}_{H^{-1/2}(\G)}
:=\underset{\norm{\phi}_{H^{1/2}(\G)}=1}{\sup_{\phi\in H^{1/2}(\G)}}
\left \langle 
u, \phi 
\right\rangle_{H^{-1/2}(\G),H^{1/2}(\G)}.
$$

The following trace inequality holds:
\begin{equation}\label{trace1}
	\norm{u|_{\G}}_{H^{1/2}(\G)} 
	\leq C \norm{u}_{H^1(\O)}, \quad u \in H^1(\O).
\end{equation} 
Any function in $H^{1/2}(\G)$ can be characterized as 
the trace of a function in $H^1(\O)$. 
The trace map has a continuous right inverse 
$\mathcal{I}:H^{1/2}(\G)\rightarrow H^1(\O)$, satisfying
\begin{equation}\label{inverseTrace}
	\norm{\mathcal{I}(u)}_{H^1(\O)}
	\leq C\norm{u}_{H^{1/2}(\G)}, 
	\quad \forall u \in H^{1/2}(\G),
\end{equation}
where the constant $C$ depends only on $\G$. 
We need the Sobolev inequality
\begin{equation}\label{SobolevInequality}
	\norm{u}_{L^4(\G)} \leq C\norm{u}_{H^{1/2}(\G)}.
\end{equation}
Indeed,  $H^{1/2}(\G)$ is continuously embedded in $L^p(\G)$ 
for $p\in [1,4]$. This embedding is compact for $p\in [1,4)$. 
In particular, $H^{1/2}(\G)$ is compactly embedded in $L^2(\G)$.

Let $X$ be a separable Banach space $X$ and $p\in [1,\infty]$, 
We make routinely use of Lebesgue-Bochner spaces such as
$L^p(\Omega;X)$ and $L^p(0,T;X)$. We also use the spaces of 
continuous functions from $\O$ to $X$ and $(0,T)$ to $X$, 
denoted by $C(\O;X)$ and $C(0,T;X)$, respectively, 
and the similar spaces with $C$ replaced by $C^p$ or $C^p_0$.
If $X$ is a Banach space, then $X/\R$ denotes 
the space consisting of classes of functions in $X$ that are 
equal up to an additive constant.

Recall that $H^{1/2}(\G)$ is a Hilbert space 
embedded in a continuous and dense way in $L^2(\G)$. 
The (Lions-Magenes) integration-by-parts formula holds 
for functions $u_1,u_2$ that belong to the Banach space
\begin{align*}
	\mathcal{V}_{\G,T}
	& = \Bigl\{
	u \in L^2(0,T; H^{1/2}(\G))\cap L^4((0,T)\times \G) \, \big| \, 
	\\ & \qquad\qquad 
	\dt u \in L^2(0,T;H^{-1/2}(\G))+L^{4/3}((0,T)\times \G) 
	\Bigr\},
\end{align*}
equipped with the norm
$$
\norm{u}_{\mathcal{V}_{\G,T}}
=\norm{u}_{L^2(0,T;H^{1/2}(\G))\cap L^4((0,T)\times \G)}
+\norm{\dt u}_{L^2(0,T;H^{-1/2}(\G))+L^{4/3}((0,T)\times \G)},
$$
where $\norm{u}_{X_1\cap X_2}=
\max \left(\norm{u}_{X_1},\norm{u}_{X_2} \right)$, 
$\norm{u}_{X_1+X_2}=\inf\limits_{u=u_1+u_2} \left(\norm{u_1}_{X_1}
+\norm{u_2}_{X_2}\right)$. Indeed, for $u_1,u_2 \in \mathcal{V}_{\G,T}$, 
$[0,T]\ni t\mapsto \left ( u_1(t), u_2(t) 
\right)_{L^2(\G)}$ is continuous and 
\begin{equation}\label{eq:int-by-parts-general}
	\begin{split}
		& \int_{t_1}^{t_2} 
		\left \langle 
		\dt u_1, u_2 
		\right\rangle\, dt 
		+\int_{t_1}^{t_2} 
		\left \langle 
		\dt u_2,u_1  
		\right\rangle\, dt
		\\ & \qquad
		= \left ( 
		u_1(t_2), u_2(t_2) 
		\right)_{L^2(\G)}
		-\left ( 
		u_1(t_1), u_2(t_1) 
		\right)_{L^2(\G)},
	\end{split}
\end{equation}
for all $t_1,t_2\in [0,T]$, $t_1<t_2$, where 
$\left\langle \cdot, \cdot \right \rangle$ denotes 
the duality pairing between $H^{-1/2}(\G)+L^{4/3}(\G)$ 
and $H^{1/2}(\G)\cap L^4(\G)$. For a proof of \eqref{eq:int-by-parts-general} 
that can be adapted to our situation, see e.g.~\cite[p.~99]{Boyer:2012aa}. 

Taking $u_1=u_2=u\in \mathcal{V}_{\G,T}$, we obtain the chain rule
\begin{equation}\label{eq:chain-rule}
	\begin{split}
		\int_{t_1}^{t_2} 
		\left \langle 
		\dt u, u 
		\right \rangle \, dt 
		= \frac12 \norm{u(t_2)}_{L^2(\G)}^2 
		-\frac12 \norm{u(u_1)}_{L^2(\G)}^2.
	\end{split}
\end{equation}

Adapting standard arguments (see e.g.~\cite[p.~101]{Boyer:2012aa}), 
the embedding
\begin{equation}\label{cont-embeding}
	\mathcal{V}_{\G,T}\hookrightarrow C(0,T; L^2(\G))
\end{equation}
is continuous. Indeed, this result follows from the continuity of the 
squared norm $t\mapsto \norm{u(t)}_{L^2(\G)}^2$ (see above) and  
the weak continuity of $u$ in $L^2(\G)$. The latter results 
from an easily obtained bound on $u$ in $L^\infty(0,T;L^2(\G))$ 
and the continuity of $u$ in $H^{-1/2}(\G)$, both facts being 
deducible from \eqref{eq:int-by-parts-general}. 

Let us dwell a bit further on the time continuity 
of functions in $\mathcal{V}_{\G,T}$. 
By \eqref{SobolevInequality}, $H^{1/2}\subset L^4(\G)$ 
and so $L^{4/3}(\G)\subset H^{-1/2}(\G)$. Therefore, 
\begin{equation*}
	L^2(0,T;H^{-1/2}(\G))+L^{4/3}((0,T)\times \G) 
	\subset L^{4/3}(0,T;H^{-1/2}(\G)).
\end{equation*}
With $u_1=u\in \mathcal{V}_{\G,T}$ and $u_2=\phi\in H^{1/2}(\G)$ 
in \eqref{eq:int-by-parts-general}, it follows that
\begin{align*}
	\left ( 
	u(t_2)-u(t_1), \phi 
	\right)_{L^2(\G)}
	&
	=\int_{t_1}^{t_2} 
	\left \langle 
	\dt u, \phi 
	\right \rangle \, dt
	\\ & 
	\le \norm{\phi}_{H^{1/2}(\G)}
	\int_{t_1}^{t_2} \norm{\dt u}_{H^{-1/2}(\G)}\, dt
	\\ & \le 
	\norm{\phi}_{H^{1/2}(\G)} 
	\norm{\dt u}_{L^{4/3}(t_1,t_2;H^{-1/2}(\G))}
	(t_2-t_1)^{1/4}.
\end{align*}
Fix a small shift $\Delta_t>0$. Specifying 
$t_1=t\in (0,T-\Delta_t)$, $t_2=t+\Delta_t$, 
and $\phi=u(t+\Delta_t,\cdot)-u(t,\cdot)$ gives 
\begin{align*}
	&\norm{u(t+\Delta_t,\cdot)-u(t,\cdot)}_{L^2(\G)}^2
	\\ & \qquad
	\le \norm{u(t+\Delta_t,\cdot)-u(t,\cdot)}_{H^{1/2}(\G)}
	\norm{\dt u}_{L^{4/3}(0,T;H^{-1/2}(\G))}
	\Delta_t^{1/4}.
\end{align*}
Integrating this inequality over $t\in (0,T-\Delta_t)$, 
accompanied by a few elementary manipulations, results in the 
temporal translation estimate
\begin{equation*}
	\begin{split}
		& \norm{u(\cdot+\Delta_t,\cdot)
		-u(\cdot,\cdot)}_{L^2(0,T-\Delta_t;L^2(\G))}
		\\ & \quad
		\le C_T\norm{u}_{L^2(0,T;H^{1/2}(\G))}^{1/2} 
		\norm{\dt u}_{L^{4/3}(0,T;H^{-1/2}(\G))}^{1/2}
		\Delta_t^{1/8}, 
		\quad u\in \mathcal{V}_{\G,T},
	\end{split}
\end{equation*}
where $C_T=2^{1/2}T^{1/4}$. 
A similar estimate holds for negative $\Delta_t$.

There is a compact embedding of $\mathcal{V}_{\G,T}$ in 
$L^2(0,T;L^2(\G))$. As pointed out above, 
$\mathcal{V}_{\G,T}$ is a subset of 
$\seq{u\in L^2(0,T;H^{1/2}(\G):\dt u \in L^{4/3}(0,T;H^{-1/2}(\G))}$, 
which is compactly embedded in $L^2(0,T;L^2(\G))$ by the Aubin-Lions theorem.

We need a generalization of this 
result due to Simon \cite{Simon:1987vn}. 
Given two Banach spaces $X_1\subset X_0$, with 
$X_1$ compactly embedded in $X_0$, let $\mathcal{K}$ be a collection 
of functions in $L^p(0,T;X_0)$, $p\in [1,\infty]$. 
The work \cite{Simon:1987vn} supplies several results ensuring the 
compactness of $\mathcal{K}$ in $L^p(0,T;X_0)$ (in $C([0,T];X_0)$ 
if $p=\infty$). For example, we can assume that $\mathcal{K}$ 
is bounded in $L^1_{\mathrm{loc}}(0,T;X_1)$ and 
$$
\norm{u(\cdot+\Delta_t)-u}_{L^p(0,T-\Delta_t;X_0)}\to 0 
\quad \text{as $\Delta_t\to 0$, uniformly for $u\in \mathcal{K}$},
$$ 
cf.~\cite[Theorem 3]{Simon:1987vn}. We apply this result 
with $p=2$, $X_1=H^{1/2}(\Ge)$, $X_0=L^2(\Ge)$. Another result involves 
a third Banach space $X_{-1}$ (e.g.~$X_{-1}=H^{-1/2}(\Ge)$), such 
that $X_1\subset X_0\subset X_{-1}$ and $X_1$ is compactly embedded in $X_0$. 
Compactness of $\mathcal{K}$ in $L^p(0,T;X_0)$ follows 
if the set $\mathcal{K}$ is bounded in $L^p(0,T;X_1)$ and, as $\Delta_t\to 0$, 
$\norm{u(\cdot+\Delta_t)-u}_{L^p(0,T-\Delta_t;X_{-1})}\to 0$, 
uniformly for $u\in \mathcal{K}$ \cite[Theorem 5]{Simon:1987vn}.

\subsection{Two-scale convergence} \label{sec:twoScale}
Recall that $\Omega\subset \R^3$ denotes the entire 
(connected, bounded, open) cardiac domain, assumed  
to be of the form \eqref{def:domain}. 
The assumption \eqref{def:domain} simplifies the 
presentation. With mild modifications of the upcoming proofs, the results 
remain valid for general domains with Lipschitz boundary.
Let $Y$ be a reference unit cell in $\R^3$, which 
we fix to be the unit cube $Y := [0,1]^3$. 

Let $Y_i$ and $Y_e$ be the (disjoint, connected, open) 
intra and extracellular spaces within $Y$, separated 
by the cell membrane $\G$: 
$$
\overline{Y_i} \cup  \overline{Y_e}  = Y, 
\quad \G = \partial Y_i \setminus \partial Y.
$$
Denote by $K^{\e}$ the set of $k\in \Z^3$ for which 
$\bigcup_{k\in K^\e} \e\left(k+Y \right)=\overline{\Omega}$. 
We define the intracellular domain $\Omega_i^{\e}$, the extracellular 
domain $\Omega_e^{\e}$, and the cell membrane $\Ge$ as
\begin{equation}\label{domains-escaled}
	\Omega_j^{\e} = \bigcup_{k\in K^{\e}}\e(k+Y_j), 
	\quad j=i,e, \qquad 
	\Ge = \bigcup_{k\in K^\e} \e(k+\G).
\end{equation}
Both sets $\Oi,\Oe$ are connected Lipschitz domains, see Figure 1.1. 
Note however that it is impossible 
to have both $\Oi$ and $\Oe$ connected in a two-dimensional picture.

\begin{figure}
\begin{tikzpicture}[scale=0.17]

\newcommand{\s}{2}

\newcommand{\x}{20*\s}
\newcommand{\y}{0}
\newcommand{\Cx}{\x+5*\s}
\newcommand{\Cy}{\y+5*\s}
\renewcommand{\p}{3*\s}
\newcommand{\wg}{.5*\s}
\renewcommand{\lg}{2*\s}
\newcommand{\rr}{\s*10}

\pgfmathtruncatemacro{\rr}{10*\s};
\pgfmathtruncatemacro{\Dx}{\x+5*\s};
\pgfmathtruncatemacro{\Dy}{\y};
\pgfmathtruncatemacro{\Lx}{\x};
\pgfmathtruncatemacro{\Ly}{\y+5*\s};
\pgfmathtruncatemacro{\Ux}{\x+5*\s};
\pgfmathtruncatemacro{\Uy}{\y +10*\s};
\pgfmathtruncatemacro{\Rx}{\x+10*\s};
\pgfmathtruncatemacro{\Ry}{\y+5*\s};

\fill  [color=blue!30](\x,\y) rectangle (\x+\rr,\y+\rr);

\fill[color=red!30] (\Cx,\Cy) circle (\p);
\draw[thick,color=yellow] (\Cx,\Cy) circle (\p);

\fill [color=red!30] 
	(\Dx-\wg/2,\Dy) rectangle (\Dx+\wg/2 ,\Dy+\lg)
	(\Lx,\Ly-\wg/2) rectangle (\Lx+\lg ,\Ly+\wg/2)
	(\Ux-\wg/2,\Uy) rectangle (\Ux+\wg/2,\Uy-\lg)
	(\Rx,\Ry-\wg/2) rectangle (\Rx-\lg,\Ry+\wg/2);

\draw [thick,color=yellow] 
	(\Dx-\wg/2,\Dy) -- (\Dx-\wg/2 ,\Dy+\lg)
	(\Dx+\wg/2,\Dy) -- (\Dx+\wg/2 ,\Dy+\lg)
	(\Lx,\Ly-\wg/2) -- (\Lx+\lg ,\Ly-\wg/2)
	(\Lx,\Ly+\wg/2) -- (\Lx+\lg ,\Ly+\wg/2)
	(\Ux-\wg/2,\Uy)-- (\Ux-\wg/2,\Uy-\lg)
	(\Ux+\wg/2,\Uy) -- (\Ux+\wg/2,\Uy-\lg)
	(\Rx,\Ry-\wg/2)-- (\Rx-\lg,\Ry-\wg/2)
	(\Rx,\Ry+\wg/2)-- (\Rx-\lg,\Ry+\wg/2);
\draw [line width = 0.5mm] (\x,\y) rectangle (\x+\rr,\y+\rr);  


  \node [black] at (\Cx,\Cy) {\LARGE$Y_i$};
  \node [black] at (\Cx,\Cy+6*\s) {\LARGE$Y$};      
  \node [below right,black] at (55,5) {\LARGE$Y_e$};
  \node   at (\x+7*\s,7*\s) {\LARGE$\Gamma$};  
  \node   at (5*\s,11*\s) {\LARGE$\Omega= \Oi \cup \Oe\cup \Ge$};        
    
\node (A) at (11*\s,6*\s) {};
\node (B) at (19*\s,6*\s) {};
\draw[->, bend left=45, thick] 
       (A) edge[bend left ] node[above] {\Large$y=\frac{x}{\varepsilon}$} (B);

\draw [decorate,decoration={brace,amplitude=3*\s},xshift=5*\s,yshift=0pt,thick]
(10*\s,6*\s) -- (10*\s,4*\s)node [black,midway,xshift=14pt] {\footnotesize
\LARGE$\varepsilon$};


\renewcommand{\s}{2/5}
\renewcommand{\p}{3*\s}
\renewcommand{\wg}{.5*\s}
\renewcommand{\lg}{2*\s}
\renewcommand{\rr}{\s*10}
\foreach \i in {0,...,4}
{
	\foreach \j in {0,...,4}
	{
        \pgfmathtruncatemacro{\x}{\i *10*\s};
        \pgfmathtruncatemacro{\y}{\j*10*\s};

		\pgfmathtruncatemacro{\Cx}{\x+5*\s}
		\pgfmathtruncatemacro{\Cy}{\y+5*\s}
		
		\pgfmathtruncatemacro{\rr}{10*\s};
		\pgfmathtruncatemacro{\Dx}{\x+5*\s};
		\pgfmathtruncatemacro{\Dy}{\y};
		\pgfmathtruncatemacro{\Lx}{\x};
		\pgfmathtruncatemacro{\Ly}{\y+5*\s};
		\pgfmathtruncatemacro{\Ux}{\x+5*\s};
		\pgfmathtruncatemacro{\Uy}{\y +10*\s};
		\pgfmathtruncatemacro{\Rx}{\x+10*\s};
		\pgfmathtruncatemacro{\Ry}{\y+5*\s};
		\fill  [color=blue!30](\x,\y) rectangle (\x+\rr,\y+\rr);
		
		\fill[color=red!30] (\Cx,\Cy) circle (\p);
		\draw[thick,color=yellow] (\Cx,\Cy) circle (\p);

		\fill [color=red!30] 
			(\Dx-\wg/2,\Dy) rectangle (\Dx+\wg/2 ,\Dy+\lg)
			(\Lx,\Ly-\wg/2) rectangle (\Lx+\lg ,\Ly+\wg/2)
			(\Ux-\wg/2,\Uy) rectangle (\Ux+\wg/2,\Uy-\lg)
			(\Rx,\Ry-\wg/2) rectangle (\Rx-\lg,\Ry+\wg/2);
		
		\draw [thick,color=yellow] 
			(\Dx-\wg/2,\Dy) -- (\Dx-\wg/2 ,\Dy+\lg)
			(\Dx+\wg/2,\Dy) -- (\Dx+\wg/2 ,\Dy+\lg)
			(\Lx,\Ly-\wg/2) -- (\Lx+\lg ,\Ly-\wg/2)
			(\Lx,\Ly+\wg/2) -- (\Lx+\lg ,\Ly+\wg/2)
			(\Ux-\wg/2,\Uy)-- (\Ux-\wg/2,\Uy-\lg)
			(\Ux+\wg/2,\Uy) -- (\Ux+\wg/2,\Uy-\lg)
			(\Rx,\Ry-\wg/2)-- (\Rx-\lg,\Ry-\wg/2)
			(\Rx,\Ry+\wg/2)-- (\Rx-\lg,\Ry+\wg/2);
		\draw [line width = 0.1mm] (\x,\y) rectangle (\x+\rr,\y+\rr);  
	}
}
\draw [line width = 0.5mm] (0,0) rectangle (50*\s,50*\s);

\pgfmathtruncatemacro{\x}{4*10*\s};
\pgfmathtruncatemacro{\y}{2*10*\s};
\draw [line width = 0.5mm] (\x,\y) rectangle (\x+\rr,\y+\rr);


\end{tikzpicture}
\caption{The rescaled sets $\Oi$, $\Oe$, $\Ge$ (left) 
and the unit cell $Y$ (right).}
\end{figure}
 
To derive estimates for 
the microscopic model, we employ the following trace inequality 
for $\e$-periodic hypersurfaces:
\begin{equation}\label{trace}
	\e \norm{u|_{\Ge}}^2_{L^2(\Ge)} 
	\leq  C \left( \norm{u}^2_{L^2(\Oj)} 
	+ \e^2 \norm{\nabla u}^2_{L^2(\Oj)} \right),
	\quad u\in H^1(\Oj), \; j=i,e,
\end{equation}
for some constant $C$ independent of $\e$, 
cf.~\cite[Lemma 3]{Hornung:1991aa} 
or \cite[Lemma 4.2]{Marciniak-Czochra:2008aa}.

We need a uniform Poincar\'{e} inequality 
for perforated domains \cite{Cioranescu}.
\begin{lemma}
There exists a constant $C$, independent of $\e>0$, such that
\begin{equation}\label{Poincare}
	\norm{u -\frac{1}{\abs{\Oj}}
	\int_{\Oj} u \,dx }_{L^2\left(\Oj\right)} 
	\leq C \norm{\nabla u}_{L^2\left(\Oj\right)},  
\end{equation}
for all $u \in H^1\left(\Oj\right)$, $j=i,e$.
\end{lemma}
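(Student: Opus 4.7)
The plan is to reduce the uniform Poincar\'e inequality on the perforated, $\e$-dependent domain $\Oj$ to the classical Poincar\'e--Wirtinger inequality on the fixed domain $\O$, by way of a uniform extension operator. This is the standard route for perforated-domain Poincar\'e inequalities, and is the approach carried out in the Cioranescu book cited in the statement.

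First, I would invoke the extension theorem of Cioranescu--Saint Jean Paulin (proved in the book \cite{Cioranescu}): since $Y_j$ is connected and smooth enough, there exists a bounded linear extension operator $P_\e^j: H^1(\Oj) \to H^1(\O)$ such that $(P_\e^j u)|_{\Oj}=u$ and
\begin{equation*}
	\norm{P_\e^j u}_{L^2(\O)} \leq C \norm{u}_{L^2(\Oj)}, \qquad
	\norm{\nabla P_\e^j u}_{L^2(\O)} \leq C \norm{\nabla u}_{L^2(\Oj)},
\end{equation*}
with $C$ independent of $\e$. Setting $\tilde{u}:=P_\e^j u$ and $\tilde{m}:=|\O|^{-1}\int_\O \tilde{u}\dx$, the classical Poincar\'e--Wirtinger inequality on the fixed Lipschitz domain $\O$ then gives
\begin{equation*}
	\norm{\tilde{u}-\tilde{m}}_{L^2(\O)} \leq C_\O \norm{\nabla \tilde{u}}_{L^2(\O)}
	\leq C \norm{\nabla u}_{L^2(\Oj)}.
\end{equation*}

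Next, I would compare this constant $\tilde{m}$ with the intended mean $m:=|\Oj|^{-1}\int_{\Oj} u\dx$. By Jensen's inequality applied on $\Oj$,
\begin{equation*}
	\abs{\Oj}(m-\tilde{m})^2
	\leq \int_{\Oj}(u-\tilde{m})^2\dx
	= \int_{\Oj}(\tilde{u}-\tilde{m})^2\dx
	\leq \norm{\tilde{u}-\tilde{m}}_{L^2(\O)}^2.
\end{equation*}
Because $\Oj$ is made up of $O(\e^{-3})$ translated copies of $\e Y_j$ filling $\O$, the measure $\abs{\Oj}$ is bounded below by a positive constant independent of $\e$ (it converges to $|Y_j|\cdot|\O|>0$). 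Therefore $\abs{m-\tilde{m}}\leq C\norm{\nabla u}_{L^2(\Oj)}$, and a triangle inequality yields
\begin{equation*}
	\norm{u-m}_{L^2(\Oj)}
	\leq \norm{\tilde{u}-\tilde{m}}_{L^2(\O)} + \abs{m-\tilde{m}}\abs{\Oj}^{1/2}
	\leq C\norm{\nabla u}_{L^2(\Oj)},
\end{equation*}
which is \eqref{Poincare}.

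The principal obstacle is Step~1, the construction of the uniform extension operator $P_\e^j$; it is non-trivial and relies crucially on the connectedness of the reference set $Y_j$ (to set up a local Poincar\'e inequality on $Y_j$) together with a reflection/partition-of-unity argument across $\G$ carried out cell by cell, with the bounds scaling correctly under the $\e$-dilation. Since this construction is fully worked out in the reference \cite{Cioranescu}, I would simply quote it rather than reproduce the argument. The rest of the proof, as outlined above, is then a short exercise combining Poincar\'e--Wirtinger on $\O$ with the comparison of the two mean values.
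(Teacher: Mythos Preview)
Your argument is correct. The paper does not supply its own proof of this lemma; it simply states the result, cites \cite{Cioranescu}, and remarks that Lipschitz regularity suffices and that connectedness of $Y_j$ is essential. Your proposal is precisely the standard proof carried out in that reference, so you are in agreement with the paper's (implicit) approach.

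One minor simplification: Step~3 can be shortened by noting that the mean $m$ minimizes $c\mapsto\norm{u-c}_{L^2(\Oj)}$, so directly $\norm{u-m}_{L^2(\Oj)}\le\norm{u-\tilde m}_{L^2(\Oj)}\le\norm{\tilde u-\tilde m}_{L^2(\O)}$, avoiding the comparison of the two means altogether.
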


Estimate \eqref{Poincare} holds under mild regularity assumptions 
on the perforated domains; a Lipschitz boundary is more than 
sufficient (but connectedness is essential). 

Recall that a sequence $\seq{u^{\e}}_{\e>0} \subset L^2((0,T)\times \Omega)$ 
\textit{two-scale converges} to 
$u$ in $L^2((0,T)\times \O ;L^2_{\mathrm{per}}(Y))$ if 
\begin{equation}\label{def:two-scale}
	\int_0^T\int_{\O} u^{\e}(t,x)\p \left(t,x,\frac{x}{\e}\right) \, dx \, dt 
	\overset{\e\to 0}{\to} 
	\int_0^T \int_{\O} \int_Y u(t,x,y) \p(t,x,y) \, dy\, dx\, dt, 
\end{equation}
for all $\p\in C([0,T]\times \overline{\Omega};C_{\mathrm{per}}(Y))$. 
We express this symbolically as
$$
u^{\e}\overset{2}{\rightharpoonup} u. 
$$
By density properties, the convergence 
\eqref{def:two-scale} also holds for test functions 
$\p$ from $L^2((0,T)\times \Omega;C_{\mathrm{per}}(Y))$ 
\cite[p.~176]{Donato}.
 
The two-scale compactness theorem \cite{Allaire,Donato} 
is of fundamental importance.
\begin{theorem}[two-scale compactness]\label{twoL2}
Let $ \{u^{\e}\}_{\e>0}$ be a bounded sequence 
in $L^2((0,T)\times \O)$, $\norm{u^{\e}}_{L^2((0,T)\times \O)}
\leq C$ $\forall \e>0$. Then there exist a subsequence $\e_n\to 0$ and 
a function $u\in L^2( (0,T)\times \O;L^2(Y))$ such that $u^{\e_n}$ 
two-scale converges to $u$ as $n \rightarrow \infty$.
\end{theorem}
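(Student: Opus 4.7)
The plan is to follow the classical Nguetseng--Allaire strategy: realize the two-scale limit via the Riesz representation theorem on the Hilbert space $L^2((0,T)\times\Omega\times Y)$, after extracting a subsequence along which the natural ``pairing'' functionals converge. The statement is purely about weak compactness, so there is no regularity to extract; only a clever identification of the limit functional.

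First, I would take as the space of admissible test functions $D := L^2((0,T)\times\Omega;C_{\mathrm{per}}(Y))$, which is separable. For $\varphi\in D$ write $\varphi_\e(t,x):=\varphi(t,x,x/\e)$. A standard argument (decompose $\Omega$ into $\e$-cubes, exploit uniform continuity of $\varphi(t,x,\cdot)$, and invoke periodicity) would yield both the admissibility bound $\norm{\varphi_\e}_{L^2((0,T)\times\Omega)}\le C\norm{\varphi}_D$ and, more importantly, the sharper mean-value convergence
\begin{equation*}
	\norm{\varphi_\e}_{L^2((0,T)\times\Omega)}
	\;\longrightarrow\; \norm{\varphi}_{L^2((0,T)\times\Omega\times Y)}
	\quad \text{as } \e \to 0.
\end{equation*}

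Second, for each $\e>0$ I would introduce the bounded linear functional $L^\e\colon D\to\R$ defined by
\begin{equation*}
	L^\e(\varphi) := \int_0^T\!\!\int_\Omega u^\e(t,x)\,\varphi_\e(t,x)\,dx\,dt.
\end{equation*}
Cauchy--Schwarz and the uniform bound on $\norm{u^\e}_{L^2}$ give $|L^\e(\varphi)|\le C\norm{\varphi}_D$. Since $D$ is separable, a diagonal extraction on a countable dense subset (with Bolzano--Weierstrass applied to each scalar sequence $\seq{L^\e(\varphi_k)}_\e$) produces a subsequence $\e_n\to 0$ and a linear map $L\colon D\to\R$ such that $L^{\e_n}(\varphi)\to L(\varphi)$ for every $\varphi\in D$.

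Third, and this is where the main identification happens, I would upgrade $L$ to a functional on the Hilbert space $L^2((0,T)\times\Omega\times Y)$. The sharpened convergence from the first step gives
\begin{equation*}
	|L(\varphi)|
	=\lim_n |L^{\e_n}(\varphi)|
	\le \limsup_n \norm{u^{\e_n}}_{L^2}\norm{\varphi_{\e_n}}_{L^2}
	\le C\norm{\varphi}_{L^2((0,T)\times\Omega\times Y)}.
\end{equation*}
Since $D$ is dense in $L^2((0,T)\times\Omega\times Y)\cong L^2((0,T)\times\Omega;L^2(Y))$, this estimate lets $L$ extend uniquely by continuity to the full Hilbert space. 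The Riesz representation theorem then furnishes $u\in L^2((0,T)\times\Omega;L^2(Y))$ with $L(\varphi)=\int_0^T\!\int_\Omega\int_Y u\,\varphi\,dy\,dx\,dt$ for all $\varphi\in D$, which is exactly $u^{\e_n}\overset{2}{\rightharpoonup}u$.

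The main obstacle is the mean-value step $\norm{\varphi_\e}_{L^2}\to\norm{\varphi}_{L^2((0,T)\times\Omega\times Y)}$: without the correct Hilbert-space norm on the right, the functional $L$ cannot be extended off $D$ and the Riesz argument collapses. I would establish it first for tensor products $\psi(t,x)\chi(y)$ by the classical periodic mean-value lemma and then pass to general $\varphi\in D$ by density in $C_{\mathrm{per}}(Y)$ together with the uniform continuity in $y$ guaranteed by the continuous functional calculus on the separable space $D$.
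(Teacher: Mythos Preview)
The paper does not supply its own proof of this theorem; it is stated as a background result and attributed to Allaire and to the textbook of Cioranescu--Donato. Your proposal reproduces the classical Nguetseng--Allaire argument (separable space of admissible test functions, diagonal extraction, mean-value lemma to upgrade the bound to the $L^2((0,T)\times\Omega\times Y)$ norm, Riesz representation), which is exactly the proof one finds in those references. The outline is correct; the only cosmetic remark is that the phrase ``continuous functional calculus'' in the last paragraph is out of place---you simply mean a density/approximation argument, using that simple tensors are dense in $L^2((0,T)\times\Omega;C_{\mathrm{per}}(Y))$ and that the map $\varphi\mapsto \norm{\varphi_\e}_{L^2}$ is uniformly (in $\e$) controlled by the $D$-norm.
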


Consider a sequence $\{u_j^{\e}\}_{\e>0}$ of functions 
defined on the perforated domain $(0,T)\times \Oj$, $j=i,e$. 
We write $\widetilde{u_j^\e}$ for the 
zero-extension of $u_j^\e$ to $(0,T)\times\O$:
\begin{equation*}
	\widetilde{u_j^\e}(t,x) := 
	\begin{cases}
		u_j^\e(t,x) & \text{if} \, (t,x)\in (0,T)\times \Oj, \\
		0    & \text{if} \, (t,x) \in (0,T)\times\left(\O \setminus \Oj\right).
	\end{cases}
\end{equation*} 
By Theorem $\ref{twoL2}$, $\seq{\widetilde{u_j^\e}}_{\e>0}$
has a two-scale convergent subsequence, provided we know
that $\|u_j^{\e}\|_{L^2((0,T)\times \Oj)}\leq C$ $\forall \e>0$. 
However, this is not true in general for the gradient of $\widetilde{u_j^{\e}}$, 
even if $\norm{u_j^\e}_{L^2(0,T;H^1(\Oj))}\leq C$, since 
the extension by zero creates a discontinuity across $\Ge$. 
Instead the following statement holds true for the gradient: 

\begin{lemma}\label{twoH1perf}
Fix $j\in\{i,e\}$ and suppose $u^{\e}=$ $u_j^\e$  
satisfies $\norm{u^{\e}}_{L^2(0,T;H^1(\Oj))} \leq C$ 
$\forall \e>0$. Then there exist a subsequence $\e_n\to 0$ and  
functions $u\in L^2(0,T;H^1(\O))$, 
$u_1\in L^2((0,T)\times \O;H_{\mathrm{per}}^1(Y_j))$ 
such that as $n\to \infty$, 
\begin{equation*}
	\begin{split}
		\widetilde{u^{\e_n}} 
		& \overset{2}{\rightharpoonup}\mathbbm{1}_{Y_j}(y)u(t,x) 
		\; \text{in} \; L^2((0,T)\times \Omega;L_{\mathrm{per}}^2(Y)), \\
		\widetilde{\nabla u^{\e_n} } 
		& \overset{2}{\rightharpoonup}  
		\mathbbm{1}_{Y_j}(y) \big( \nabla_x u(t,x) 
		+ \nabla_y u_1(t,x,y) \big) \; \text{in} \; 
		L^2((0,T)\times\O;L_{\mathrm{per}}^2(Y)).
	\end{split}
\end{equation*}    
Here, $\mathbbm{1}_{Y_j}(y)$ denotes the characteristic function of $Y_j$, 
\begin{equation*}
	\mathbbm{1}_j(y) = 
	\begin{cases}
		1 \quad \text{if} \;\; y\in Y_j, \\
		0 \quad \text{if} \;\; y\notin Y_j.
	\end{cases}
\end{equation*}
\end{lemma}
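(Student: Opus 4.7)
The plan is to combine the basic two-scale compactness theorem with a uniform extension operator for the periodic perforated domain $\Oj$, and then to read off the corrector $u_1$ from the standard two-scale limit identification for uniformly $H^1$-bounded sequences.

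First, I would apply Theorem \ref{twoL2} (componentwise for the gradient) to the $L^2((0,T)\times \O)$-bounded sequences $\{\widetilde{u^{\e}}\}$ and $\{\widetilde{\nabla u^{\e}}\}$. This produces, along a subsequence (not relabelled), two-scale limits $u_0 \in L^2((0,T)\times \O; L^2_{\mathrm{per}}(Y))$ and $\xi \in L^2((0,T)\times \O; L^2_{\mathrm{per}}(Y;\R^3))$. Because $\widetilde{u^{\e}}$ and $\widetilde{\nabla u^{\e}}$ vanish outside $\Oj$ while the admissible class of test functions includes smooth $\p(t,x,y)$ supported where $y \in Y\setminus \overline{Y_j}$, the definition \eqref{def:two-scale} of two-scale convergence forces $u_0 = \mathbbm{1}_{Y_j}(y)\, u_0$ and $\xi = \mathbbm{1}_{Y_j}(y)\,\xi$.

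Next I would invoke the Cioranescu--Saint Jean Paulin extension operator (see \cite{Cioranescu}), which, because $\Oj$ is a connected Lipschitz $\e$-periodic perforation of $\O$, provides a linear map $P_j^{\e}:H^1(\Oj) \to H^1(\O)$ satisfying $P_j^{\e} u^{\e}|_{\Oj} = u^{\e}$ and $\|P_j^{\e} u^{\e}\|_{L^2(0,T;H^1(\O))} \le C\|u^{\e}\|_{L^2(0,T;H^1(\Oj))}$, with $C$ independent of $\e$. Passing to a further subsequence, $P_j^{\e} u^{\e} \rightharpoonup u$ weakly in $L^2(0,T;H^1(\O))$, and the classical two-scale identification for uniformly $H^1$-bounded sequences \cite{Allaire} delivers
\begin{equation*}
	P_j^{\e} u^{\e} \overset{2}{\rightharpoonup} u(t,x),
	\qquad
	\nabla P_j^{\e} u^{\e} \overset{2}{\rightharpoonup} \nabla_x u(t,x) + \nabla_y U_1(t,x,y),
\end{equation*}
for some $U_1 \in L^2((0,T)\times \O; H^1_{\mathrm{per}}(Y))$. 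Since $\widetilde{u^{\e}} = \mathbbm{1}_{Y_j}(x/\e)\, P_j^{\e} u^{\e}$ and $\widetilde{\nabla u^{\e}} = \mathbbm{1}_{Y_j}(x/\e)\, \nabla P_j^{\e} u^{\e}$, and multiplication by the $\e$-scale oscillating periodic function $\mathbbm{1}_{Y_j}(\cdot/\e)$ preserves two-scale convergence (with the obvious product as limit), I conclude
\begin{equation*}
	u_0(t,x,y) = \mathbbm{1}_{Y_j}(y)\, u(t,x),
	\qquad
	\xi(t,x,y) = \mathbbm{1}_{Y_j}(y)\bigl(\nabla_x u(t,x) + \nabla_y U_1(t,x,y)\bigr).
\end{equation*}
Setting $u_1 := U_1|_{Y_j}$ yields an element of $L^2((0,T)\times \O; H^1_{\mathrm{per}}(Y_j))$ with the claimed representation.

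The main obstacle is the extension step: the zero extension $\widetilde{u^{\e}}$ has jumps across $\Ge$, so it does not lie in $H^1(\O)$ and the standard two-scale gradient identification cannot be applied directly. The Cioranescu--Saint Jean Paulin construction circumvents this by producing a uniformly $H^1$-bounded modification that agrees with $u^{\e}$ on $\Oj$; from there the argument is the routine two-scale template, supplemented by the bookkeeping provided by the $\e$-periodic indicator $\mathbbm{1}_{Y_j}(x/\e)$. An alternative would be to test $\widetilde{\nabla u^{\e}}$ against oscillating vector fields $\Psi(t,x,x/\e)$ with $\Div_y \Psi = 0$ in $Y_j$ and $\Psi\cdot\nu = 0$ on $\G$, and then use a Helmholtz-type decomposition on $Y_j$ to recover $\nabla_y u_1$; but routing through the extension keeps the proof elementary and short.
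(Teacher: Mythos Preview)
Your argument is correct. The paper does not actually supply a proof of this lemma; it simply refers the reader to \cite[Theorem~2.9]{Allaire} for the time-independent case and asserts that the time-dependent extension is straightforward. So there is no ``paper's own proof'' to compare against beyond Allaire's.

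That said, your route via the Cioranescu--Saint Jean Paulin extension is one of the two standard proofs, and it is not the one Allaire gives. Allaire's argument works directly on the zero extensions, without constructing a uniform $H^1(\O)$ extension: he first extracts two-scale limits $u_0,\xi$ supported in $Y_j$, then shows $u_0$ is independent of $y$ by testing $\widetilde{u^\e}$ against $\e\varphi(x)\nabla_y\psi(x/\e)$ with $\psi\in C^\infty_{\mathrm{per}}(Y)$, and finally identifies $\xi-\mathbbm{1}_{Y_j}\nabla_x u$ as a $y$-gradient by testing against periodic vector fields $\Psi(x,y)$ with $\Div_y\Psi=0$ in $Y_j$ and $\Psi\cdot\nu=0$ on $\Gamma$ --- exactly the alternative you sketch at the end. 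Your extension-based proof is arguably shorter and conceptually cleaner, at the cost of importing a nontrivial construction (the uniform extension operator, which requires the connectedness and Lipschitz hypotheses on $Y_j$). Allaire's direct approach is more self-contained but requires knowing that the orthogonal complement of such divergence-free, zero-normal-trace fields in $L^2(Y_j;\R^3)$ is precisely the space of $y$-gradients of $H^1_{\mathrm{per}}(Y_j)$ functions.

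One small point worth making explicit: your step ``multiplication by $\mathbbm{1}_{Y_j}(\cdot/\e)$ preserves two-scale convergence'' is true but not literally covered by the definition, since $\mathbbm{1}_{Y_j}(y)\varphi(t,x,y)$ is not an admissible test function (it is discontinuous in $y$). A one-line density argument---approximate $\mathbbm{1}_{Y_j}$ in $L^2(Y)$ by smooth periodic $\chi_\delta$ and use the uniform $L^2$ bound on $\nabla P_j^\e u^\e$---closes this gap; you should state it rather than leave it implicit.
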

For a proof of this lemma in the time independent 
case, see \cite[Theorem 2.9]{Allaire}. The extension to time 
dependent functions is straightforward.

There is an extension of two-scale convergence to 
periodic surfaces \cite{Allaire2}. 
Recall that a periodic surface $\Ge$ is given by
\begin{equation*}
 	\Ge:= \left\{ x\in \O \;\Big|\; \frac{x}{\e} \in k+\G \; 
	\text{for some } k\in \mathbb{Z}^3 \right\},
 \end{equation*}
where $\G \subset Y$ is a surface in the unit cell. 
Since $|\Ge| \sim \e^{-1}$, it is necessary to introduce a 
normalizing factor in the definition of two-scale convergence on surfaces. 

A sequence $\{v^{\e}\}_{\e>0}$ of functions in 
$L^2((0,T)\times \Ge)$ \textit{two-scale converges} 
to $v$ in $L^2((0,T)\times \O; L^2(\Gamma))$, written 
$$
v^{\e}\overset{2-\mathrm{S}}{\rightharpoonup} v,
$$
if, for all $\p \in C([0,T]\times \overline{\O};C_{\mathrm{per}}(\G))$,
\begin{align*}
	&\lim_{\e\to 0}\e \int_0^T\int_{\Ge} 
	v^{\e}(t,x) \p\left(t,x, \frac{x}{\e}  \right) \,dS(x) \, dt  
	\\ & \qquad \quad 
	= \int_0^T \int_{\O} \int_{\G} 
	v(t,x,y)\p(t,x,y) \, dS(y)\, dx\, dt.
\end{align*} 
As with \eqref{def:two-scale}, this convergence continues to hold 
for test functions $\p$ that belong to $L^2((0,T)\times\Omega;C_{\mathrm{per}}(\G))$.

There is a version of Theorem \ref{twoL2} for 
functions on periodic surfaces \cite{Allaire2}. 

\begin{theorem}[two-scale compactness on surfaces]\label{twoS}
Suppose $\{v^{\e}\}_{\e>0}$ is a sequence of functions in 
$L^2((0,T)\times \Gamma_\e)$ satisfying
\begin{equation}\label{two-scale-cond}
	\e \int_0^T\int_{\Ge} \abs{v^{\e}}^2\, dS\, dt \leq C,
\end{equation}
for some function $C$ that is independent of $\e>0$. 
Then there exist a subsequence $\e_n\to 0$ and a 
function $v \in L^2((0,T)\times \O ;L^2(\G))$ 
such that as $n\to \infty$, 
$$
v^{\e_n}\overset{2-\mathrm{S}}{\rightharpoonup} v.
$$
\end{theorem}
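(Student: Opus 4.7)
The approach is to mimic the classical Nguetseng--Allaire compactness scheme, transplanted to the surface setting via the periodic scaling $\e|\Ge|\sim |\G|$. For $\varphi\in \mathcal{E} := C([0,T]\times \overline{\O}; C_{\mathrm{per}}(\G))$ I would introduce the linear functional
\begin{equation*}
	L_\e(\varphi) := \e \int_0^T \int_{\Ge} v^{\e}(t,x)\,
	\varphi\!\left(t,x,\tfrac{x}{\e}\right) \, dS(x) \, dt.
\end{equation*}
Cauchy--Schwarz on $(0,T)\times\Ge$ together with the hypothesis \eqref{two-scale-cond} gives
\begin{equation*}
	\abs{L_\e(\varphi)}
	\le \sqrt{C}\left(\e\int_0^T\int_{\Ge}
	\abs{\varphi(t,x,x/\e)}^2 \, dS\, dt \right)^{1/2},
\end{equation*}
and the crude majoration $\e\int_{\Ge}\abs{\varphi(t,x,x/\e)}^2 dS \le \e|\Ge|\, \norm{\varphi}_{\infty}^2 \le C_1\norm{\varphi}_{\infty}^2$ shows that $\{L_\e\}_{\e>0}$ is uniformly bounded as functionals on $\mathcal{E}$ equipped with its sup norm. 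Since $\mathcal{E}$ is separable, a sequential Banach--Alaoglu argument yields a subsequence $\e_n\downarrow 0$ and a functional $L\in \mathcal{E}^*$ with $L_{\e_n}(\varphi)\to L(\varphi)$ for every $\varphi\in \mathcal{E}$.

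The second step upgrades the $\mathcal{E}$-bound to an $L^2$-bound on the limit. For each fixed $\varphi \in \mathcal{E}$, the periodic surface-measure identity
\begin{equation*}
	\e \int_0^T\int_{\Ge} \abs{\varphi(t,x,x/\e)}^2 dS\, dt
	\;\xrightarrow{\e\to 0}\;
	\int_0^T\int_{\O}\int_{\G} \abs{\varphi(t,x,y)}^2 \, dS(y)\, dx\, dt
\end{equation*}
holds; it is a Riemann-sum consequence of the cell-by-cell change of variables $x=\e(k+y)$, $dS(x)=\e^2 dS(y)$, $k\in K^\e$, and the periodicity of $\varphi$ in $y$. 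Passing $\e=\e_n\to 0$ in the Cauchy--Schwarz inequality above yields
\begin{equation*}
	\abs{L(\varphi)} \le
	\sqrt{C}\, \norm{\varphi}_{L^2((0,T)\times \O; L^2(\G))},
	\qquad \forall \varphi\in \mathcal{E}.
\end{equation*}

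By density of $\mathcal{E}$ in $L^2((0,T)\times \O; L^2(\G))$, $L$ extends uniquely to a bounded linear functional on this Hilbert space, and the Riesz representation theorem produces $v\in L^2((0,T)\times \O; L^2(\G))$ with
\begin{equation*}
	L(\varphi) = \int_0^T \int_\O \int_\G v(t,x,y)\,
	\varphi(t,x,y)\, dS(y)\, dx\, dt.
\end{equation*}
Restricting to $\varphi\in \mathcal{E}$ is exactly the defining identity of the two-scale convergence $v^{\e_n}\overset{2-\mathrm{S}}{\rightharpoonup} v$. The only delicate point, and the only place where the surface geometry enters in an essential way, is the periodic measure identity that converts the $\mathcal{E}$-bound on $L$ into an $L^2$-bound; the rest is soft functional analysis, completely parallel to the volumetric argument behind Theorem \ref{twoL2}.
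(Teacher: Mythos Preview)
The paper does not actually prove this theorem; it is quoted from \cite{Allaire2} (Allaire--Damlamian--Hornung) with the sentence ``There is a version of Theorem \ref{twoL2} for functions on periodic surfaces \cite{Allaire2}.'' Your argument is correct and is essentially the standard proof from that reference: uniform boundedness of the functionals $L_\e$ on the separable sup-norm space $\mathcal{E}$, weak-$*$ extraction, and the periodic surface-measure identity to upgrade the $\mathcal{E}^*$-bound to an $L^2$-bound, followed by Riesz representation. There is nothing to compare against in the paper itself.
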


One can characterize the two-scale limit of traces of 
bounded sequences in $L^2(0,T;H^1(\O^{\e}))$ as 
the trace of the two-scale limit \cite{Allaire2}.   

\begin{lemma}\label{relation trace}
Fix $j\in\{i,e\}$. Suppose $u^{\e}=u_j^\e$  
satisfies $\norm{u^{\e}}_{L^2(0,T;H^1(\Oj))} 
\leq C$ $\forall \e>0$ and, cf.~Lemma \ref{twoH1perf}, 
$\widetilde{u^{\e}} \overset{2}{\rightharpoonup} 
\mathbbm{1}_{Y_j}(y)u(t,x)$ in $L^2((0,T)\times\O;L^2(Y))$. 
Let
$$
g^{\e}:= u^{\e}\big|_{\Ge} \in L^2((0,T)\times \Ge )
$$ 
be the trace of $u^{\e}$ on $\Ge = \partial \Oj \setminus \partial \O$. 
Then, up to a subsequence,
$$
g^{\e}  \overset{2-\mathrm{S}}{\rightharpoonup} 
g := \mathbbm{1}_{\Gamma}(y)u(t,x).
$$
\end{lemma}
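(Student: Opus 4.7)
My approach is to first obtain compactness for the surface two-scale limit, and then identify it by a divergence-theorem trick that transfers the surface integral to a volume integral, to which the given volume two-scale convergence of $\widetilde{u^\e}$ applies. The trace inequality \eqref{trace} gives
\[
\e\norm{u^\e}_{L^2(\Ge)}^2
\le C\Bigl(\norm{u^\e}_{L^2(\Oj)}^2 + \e^2\norm{\nabla u^\e}_{L^2(\Oj)}^2\Bigr),
\]
so the hypothesis $\norm{u^\e}_{L^2(0,T;H^1(\Oj))}\le C$ yields $\e\int_0^T\int_{\Ge}|u^\e|^2\,dS\,dt \le C$. Theorem \ref{twoS} then produces a subsequence and a limit $g\in L^2((0,T)\times\O;L^2(\G))$ with $g^\e\overset{2-\mathrm{S}}{\rightharpoonup} g$.

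The heart of the proof is to show $g(t,x,y)=u(t,x)$. Fix a test function $\varphi\in C_c^\infty((0,T)\times\O;C_{\mathrm{per}}^\infty(Y))$ and construct a smooth, $Y$-periodic vector field $\Psi(t,x,y)$, defined for $y\in Y_j$, satisfying $\Psi(t,x,y)\cdot\nu(y)=\varphi(t,x,y)$ on $\G$, where $\nu$ is the unit normal to $\G$ pointing out of $Y_i$. Such a $\Psi$ may be obtained by extending $\nu$ smoothly into a tubular neighborhood of $\G$ inside $Y_j$, multiplying by a smooth extension of $\varphi$ from $\G$, and cutting off before reaching $\partial Y$ so that $Y$-periodicity is preserved. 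The chain rule then gives
\[
\Div_x\bigl[\Psi(t,x,x/\e)\bigr]
= (\Div_x\Psi)(t,x,x/\e) + \e^{-1}(\Div_y\Psi)(t,x,x/\e),
\]
and the divergence theorem on $\Oj$, together with the fact that $\Psi(\cdot,\cdot,\cdot/\e)$ is compactly supported away from $\partial\O$, produces
\begin{align*}
\e\int_0^T\!\int_{\Ge} u^\e \varphi(t,x,x/\e)\,dS\,dt
&= \e\int_0^T\!\int_{\Oj}\!\Bigl(\nabla u^\e\cdot\Psi(t,x,x/\e) \\
&\qquad\qquad\quad + u^\e(\Div_x\Psi)(t,x,x/\e)\Bigr)\,dx\,dt \\
&\quad + \int_0^T\!\int_{\Oj}u^\e(\Div_y\Psi)(t,x,x/\e)\,dx\,dt.
\end{align*}

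The first two terms on the right-hand side are $O(\e)$ thanks to the uniform $L^2$-bounds on $u^\e$ and $\nabla u^\e$. For the third term, I switch the integration from $\Oj$ to $\O$ via the zero-extension $\widetilde{u^\e}$, and then invoke its volume two-scale convergence to obtain the limit $\int_0^T\!\int_\O\!\int_{Y_j} u(t,x)\Div_y\Psi(t,x,y)\,dy\,dx\,dt$. The divergence theorem on $Y_j$, combined with $Y$-periodicity of $\Psi$ (which cancels the boundary contributions on $\partial Y_j\cap\partial Y$), collapses this to $\int_0^T\!\int_\O u(t,x)\int_\G\varphi(t,x,y)\,dS(y)\,dx\,dt$. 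Matching with the defining relation for the surface two-scale limit $g$ yields
\[
\int_0^T\!\int_\O\!\int_\G \bigl(g(t,x,y)-u(t,x)\bigr)\varphi(t,x,y)\,dS(y)\,dx\,dt = 0
\]
for every admissible $\varphi$, and density of these test functions in $L^2((0,T)\times\O;C_{\mathrm{per}}(\G))$ gives $g=u$ as desired.

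The main technical obstacle is the construction of $\Psi$: it must simultaneously carry the prescribed normal trace $\varphi$ on $\G$, be smooth enough to justify the divergence theorem, and be $Y$-periodic so that boundary contributions on $\partial Y_j\cap\partial Y$ vanish in the final step. The tubular-neighborhood construction handles this, but care is needed with the cutoff so that the extension does not conflict with periodicity. Once $\Psi$ is available, the remaining argument is a routine combination of integration by parts and passage to the limit via the given volume two-scale convergence.
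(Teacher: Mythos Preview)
Your argument is essentially correct and follows the standard route of Allaire--Damlamian--Hornung, which is precisely the reference \cite{Allaire2} that the paper cites for this lemma; the paper itself does not supply a proof. Two small technical points are worth tightening. First, your tubular-neighborhood construction of $\Psi$ tacitly assumes more smoothness of $\Gamma$ than the Lipschitz regularity the paper works with; a safer construction is to solve an auxiliary Neumann problem on $Y_j$ (or simply restrict to a dense class of $\varphi$ for which such a $\Psi$ is available), which yields a $\Psi$ with the required normal trace and periodic behavior on $\partial Y_j\cap\partial Y$ without needing a smooth normal extension. Second, when you pass the volume term $\int_{\Oj} u^\e(\Div_y\Psi)(t,x,x/\e)\,dx$ to the two-scale limit, the test function $\Div_y\Psi$ lives only on $Y_j$; to invoke the convergence of $\widetilde{u^\e}$ you should extend it (arbitrarily, say smoothly) to all of $Y$, and note that the limit only sees its values on $Y_j$ because the two-scale limit of $\widetilde{u^\e}$ carries the factor $\mathbbm{1}_{Y_j}$. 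Also, for $j=e$ the outward normal to $\Omega_e^\e$ on $\Ge$ is $-\nu$, so both divergence-theorem steps pick up a minus sign; these cancel, so your final identity is correct, but the intermediate display as written is only literally valid for $j=i$.
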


\begin{remark}
In view of Lemma \ref{relation trace}, we have 
(in the sense of measures)
$$
\e \, u^{\e}\big|_{\Ge} \,dS\, dt \weakstar \abs{\G}\, u\, dx\, dt, 
\qquad
\widetilde{u^{\e}}\,dx\, dt \weakstar \abs{Y_j}\, u \,dx\, dt.
$$
\end{remark}

\subsection{Unfolding operators} \label{sec:unfolding}
An alternative approach to studying convergence on 
oscillating surfaces $\Ge$ is provided by the 
boundary unfolding operator \cite{Cioranescu}.
For any $x\in \R^3$, we have the decomposition 
$x = \lfloor x\rfloor + \{ x\}$, where 
$\lfloor x \rfloor\in \Z^3$ and $\{x\}\in [0,1]^3$ 
denotes the integer and fractional parts of $x$, respectively. 
For later use, note the following simple properties, which hold 
for any $x,\bar x\in \R^3$, $n\in \Z^3$:
$$
\lfloor x+n\rfloor=\lfloor x \rfloor+n, \quad
\{ x+n\} = \{ x\}, \quad
\lfloor x+\bar x\rfloor\le \lfloor x\rfloor
+\lfloor \bar x\rfloor+(1,1,1).
$$ 
Applying the above decomposition to $x/\e$ gives
$$
x = \e \left( \lfloor x/\e \rfloor +\{ x / \e\}\right), 
$$
where $\lfloor x/\e \rfloor\in \Z^3$, $\{ x / \e\}\in [0,1]^3$.

The \textit{boundary unfolding} operator $\Tau_{\e}^b$ is defined by 
\begin{equation}\label{unfolding}
	\begin{split}
		& \Tau_{\e}^b: L^2(0,T;L^2(\Ge))\to L^2(0,T;L^2(\O\times \G)),\\
		& \Tau_{\e}^b (v)(t,x,y) = 
		v\left(t,\e \left\lfloor \frac{x}{\e}\right\rfloor + \e y\right), 
		\qquad (t,x,y) \in (0,T)\times \O\times \G.
	\end{split}
\end{equation}
The advantage of the unfolding operator is that we can 
formulate questions of convergence in a fixed 
space $L^2(0,T;L^2(\O\times \G))$. All definitions and results 
in this section are formulated in $L^2$ spaces. Everything remains the 
same, however, if we replace $L^2$ by $L^p$ for any $p\in [1,\infty)$. 
We refer to \cite{Cioranescu} for the definition of the boundary 
unfolding operator and proofs of the properties listed next. 

The boundary unfolding operator $\Tau_{\e}^b$ is bounded, 
linear, and satisfies
\begin{equation}\label{Tb-product}
	\Tau_{\e}^b (v_1 v_2) = \Tau_{\e}^b (v_1) \Tau_{\e}^b(v_2), 
	\qquad  v_1,v_2 \in L^2(0,T;L^2(\Ge)).
\end{equation}
For any $Y$-periodic function $\psi\in L^2(\G)$, set 
$\psi_\e(x):=\psi(x/\e)$. Then 
\begin{equation*}
	\Tau_{\e}^b (\psi_\e)(x,y)=\psi(y), 
	\qquad x\in \O, \, y\in \G. 
\end{equation*}
For $v\in L^2(0,T;L^2(\Ge))$, we have the integration formula
\begin{equation}\label{IntegralTransform}
	\e \int_{\G^{\e}} v(t,x) \,dS(x)
	=   \int_{\O} \int_{\G} \Tau_{\e}^b(v)(t,x,y)
	\,dS(y) \, dx, 
\end{equation}
for a.e.~$t\in (0,T)$, thereby converting an 
integral over the oscillating set $\Ge $ to 
an integral over the fixed set $\O \times \G$. 
For $v\in L^2(0,T;L^2(\Ge))$,
\begin{equation}\label{Tb-L2Bound}
	\norm{\Tau_{\e}^b(v)}_{L^2(\O \times \G)} 
	= \e^{1/2} \norm{v}_{L^2(\Ge)},
\end{equation}
for a.e.~$t\in (0,T)$. For any $v\in L^2(0,T;L^2(\Ge))$, 
\begin{equation}\label{Tb-strongconv}
	\Tau_{\e}^b(v) \overset{\e\downarrow 0}{\to} 
	v \quad \text{in $L^2( \O \times \G)$},
\end{equation}
for a.e.~$t\in (0,T)$, and also in $L^2(0,T;L^2(\O \times \G))$.
Suppose $\{v^{\e}\}_{\e>0}$ is a sequence of functions 
in $L^2((0,T)\times \Ge)$ satisfying 
\eqref{two-scale-cond}. Then
\begin{equation}\label{weak-vs-twoscale}
	v^{\e}\overset{2-\mathrm{S}}{\rightharpoonup} v
	\; \Longleftrightarrow \;
	\Tau_{\e}^b(v^\e) \weak v \quad 
	\text{in $L^2((0,T)\times \O\times \G)$}. 
\end{equation}

We need also the \textit{unfolding} operators
linked to the domains $\Oi, \Oe$ \cite{Cioranescu}:
\begin{align*}
	&\Tau^j_{\e}:L^2\left((0,T)\times \Oj\right) 
	\to L^2(0,T;L^2(\O \times Y_j)), \quad j=i,e,
	\\ & \Tau^j_{\e} (u)(t,x,y) = 
	u\left(t,\e \left\lfloor \frac{x}{\e}\right\rfloor + \e y\right), 
	\quad (t,x,y) \in (0,T)\times \O\times Y_j.
\end{align*}
The unfolding operator $\Tau^j_{\e}$ maps functions 
defined on the oscillating set $(0,T)\times \Oj$ into 
functions defined on the fixed domain $(0,T)\times \O \times Y_j$.  
The operator $\Tau_{\e}^j$ is bounded, linear and satisfies
\begin{equation*}
	\Tau_{\e}^j (uv) = \Tau_{\e}^j (u) \Tau_{\e}^j(v), 
	\qquad  u,v \in L^2\left( (0,T)\times \Oj \right).
\end{equation*}
For any $Y$-periodic function $\psi\in L^2(Y_j)$, set 
$\psi_\e(x):=\psi(x/\e)$. Then 
\begin{equation*}
	\Tau_{\e}^j (\psi_\e)(x,y)=\psi(y), 
	\qquad x\in \O, \, y\in Y_j. 
\end{equation*}
For $u \in L^2(0,T;L^2(\Oj))$, we have the integration formula
\begin{equation*}
	 \int_{\Oj} u(t,x) \,dx\, dt 
	 =\int_{\O} \int_{Y_j} \Tau_{\e}^j(u)(t,x,y)\,dy \, dx \, dt,
\end{equation*}
for a.e.~$t\in (0,T)$. The integration formula implies
\begin{equation}\label{LpBound2}
	\norm{\Tau_{\e}^j(u)}_{L^2(\O \times Y_j)} 
	=\norm{u}_{L^2(\Oj)},
\end{equation}
for a.e.~$t\in (0,T)$. Let $u \in L^2(0,T;H^1(\Oj))$. Then 
\begin{equation*}
	\nabla_y \Tau_{\e}^j (u) 
	=\e \Tau_{\e}^j(\nabla u),
	\quad \text{a.e.~in $(0,T)\times \O\times Y_j$},
\end{equation*}
and hence $\Tau_{\e}^j (u)\in L^2(\O;H^1(Y_j))$, for a.e.~$t\in (0,T)$:
\begin{equation}\label{unfoldingH1}
	\norm{\nabla_y \Tau_{\e}^j(u)}_{L^2(\O \times Y_j))} 
	=\e \norm{\nabla u}_{L^2(\Oj)}.
\end{equation}

The unfolding operators $\Tau_{\e}^b$ and $\Tau_{\e}^j$ 
are related in the following sense:
\begin{equation}\label{unfoldingTrace}
	\Tau_{\e}^b ( u|_{\Ge}) 
	= \Tau_{\e}^j(u)|_{\G}, 
	\qquad u \in L^2(0,T;H^1(\Oj)),\quad j=i,e,
\end{equation}
for a.e.~$t\in (0,T)$. Combining \eqref{unfoldingTrace}, 
\eqref{LpBound2}, \eqref{unfoldingH1}, and the trace 
inequality \eqref{trace1} in $H^1(\Oj)$, we obtain
\begin{equation}\label{unfoldingH1/2}
	\begin{split}
		& \norm{\Tau_{\e}^b(u|_{\Ge})}_{L^2(\O;H^{1/2}(\G))}^2 
		\\ & \qquad
		\le  C \left(\norm{\Tau_{\e}^j(u)}_{L^2(\O;L^2(Y_j))}^2
		+ \norm{\nabla_y \Tau_{\e}^j(u)}_{L^2(\O;L^2(Y_j))}^2\right) 
		\\ & \qquad =  C\left(\norm{u}_{L^2(\Oj)}^2
		+\e^2 \norm{\nabla u}_{L^2(\Oj)}^2\right),
	\quad j=i,e,
\end{split}
\end{equation}
for a.e.~$t\in (0,T)$, where the constant 
$C$ is independent of $\e$ and $t$.
Whenever it is convenient, we will
write $\Tau_{\e}^b (u)$ instead 
of $\Tau_{\e}^b(u|_{\Ge})$.

Next, we consider the local average (mean in the cells) operator 
\begin{equation*}
	\mathcal{M}^j_{\e}(u)(t,x) 
	= \int_{Y_j} \Tau_{\e}^{j}(u)(t,x,y)\,dy, 
	\quad (t,x)\in (0,T)\times \Omega,
	\qquad j=i,e,
\end{equation*}
and the piecewise linear interpolation operator \cite[Definition 2.5]{Cioranescu}
\begin{equation}\label{Qdefinition}
	\begin{split}
		& Q_{\e}^{j}:L^2(0,T;H^1(\Oj)) \to L^2(0,T;H^1(\O)), \quad j=i,e,
		\\ & 
		Q_{\e}^{j}(u) \;\; 
		\text{is the $Q_1$-interpolation (in $x$) of $\mathcal{M}^j_{\e}(u)$}. 
	\end{split}
\end{equation}
Given the Lipschitz regularity of $Y_j$, the interpolation operator 
$Q_\e^j$ satisfies the following estimates 
\cite[Propositions 2.7 and 2.8]{Cioranescu}:
\begin{equation}\label{Qestimates}
	\begin{split}
		& \norm{Q_{\e}^j(u)-u}_{L^2((0,T)\times \Oj)} 
		\leq C \e \norm{\nabla u}_{L^2((0,T)\times \Oj)}, 
		\\ & 
		\norm{\nabla Q_{\e}^j(u)}_{L^2((0,T)\times \Oj)} 
		\leq C \norm{\nabla u}_{L^2((0,T)\times \Oj)},
	\end{split}
\end{equation} 
where $C$ is a constant that is independent of $\e$.

\section{Microscopic bidomain model}\label{sec:wellposedness}

In this section we present a relevant notion of (weak) solution 
for the microscopic problem \eqref{micro}, \eqref{eq:ib-cond}, along with 
an accompanying existence theorem. We also 
derive some ``$\e$-independent" a priori estimates, 
which are used later to extract two-scale convergent subsequences.  
 
\subsection{Assumptions on the data} \label{sec:assumptions}
We impose the following set of assumptions on 
the ``membrane" functions $I,H$:

$\bullet$ Generalized FitzHugh-Nagumo model: For $v,w\in \R$,
\begin{equation}\label{GFHN}
	\begin{split}
		& I(v, w) = I_1 (v) + I_2(v)w,
		\quad  H(v, w) = h(v) + c_{H,1}w,\\
		& \text{where } I_1 , I_2, h \in C^1(\R),\, c_{H,1}\in \R, \text{ and} \\ 
		&\abs{I_1(v)} \leq c_{I,1}\left(1+\abs{v}^3\right), 
		\quad I_1(v)v \geq c_I \abs{v}^4-c_{I,2}\abs{v}^2, 
		\\ & 
		I_2(v) = c_{I,3}+ c_{I,4}v, 
		\quad \abs{h(v)}\leq c_{H,2}\left(1+\abs{v}^2\right),
	\end{split}
	\tag{\textbf{GFHN}}
\end{equation}
for some constants $c_I > 0$ and $c_{I,1}, c_{I,2}, 
c_{I,3}, c_{I,4}, c_{H,2}\ge 0$. 

The classical FitzHugh-Nagumo model corresponds to
\begin{equation}\label{FHN}
	I(v,w)=v(v-a)(v-1)+w, \qquad H(v,w)=\epsilon (k v -w),
\end{equation}
where $a\in (0,1)$ and $k,\epsilon >0$ are constants. 

Repeated applications of Cauchy's inequality yields
\begin{equation}\label{IH-conseq}
	vI(v,w)-wH(v,w) 
	\ge \gamma \abs{v}^4-\beta \left(\abs{v}^2+\abs{w}^2\right),
\end{equation}
for some constants $\gamma>0$ and $\beta\ge 0$. 
This inequality will be used to bound the 
transmembrane potential in the $L^4$ norm.

Consider a quadratic matrix $A$, which always
can be written as the sum of its symmetric 
part $\frac12 (A+A^\top)$ and 
its skew-symmetric part $\frac12 (A-A^\top)$. 
Recall that in a quadratic form $z\mapsto Az\cdot z$ the 
skew-symmetric part does not contribute. 
Therefore, letting $\lambda_{\text{min}}$ 
and $\lambda_{\text{max}}$ denote respectively 
the minimum and maximum eigenvalues of the symmetric 
part of $A$, we have 
$$
\lambda_{\text{min}} \abs{z}^2\le  Az\cdot z 
\le \lambda_{\text{max}} \abs{z}^2, \qquad \forall z.
$$
For $\mu>0$, consider the function 
$F^\mu:\R^2\to \R^2$ defined by
$$F^\mu(z)
=
\begin{pmatrix}
\mu I(z)\\
-H(z)	
\end{pmatrix}.$$
Denote by $\lambda^\mu_{\text{min}}(z)$, 
$\lambda^\mu_{\text{max}}(z)$ 
the minimum, maximum eigenvalues of the 
symmetric part of the matrix $\nabla F^\mu(z)$. 
To ensure that weak solutions are unique, we need an additional 
assumption on $I, H$ expressed via $F^\mu$ \cite[p.~479]{Bourgault}: 
$\exists \mu,\lambda>0$ such that  
\begin{equation}\label{uniq-cond1}
	\lambda^\mu_{\text{min}}(z)\ge \lambda, \quad 
	\forall z\in \R^2.
\end{equation}
One can verify that the FitzHugh-Nagumo model \eqref{FHN} 
obeys \eqref{uniq-cond1} (with $\mu=\e k$).

A consequence of \eqref{uniq-cond1} is that 
$\nabla F^\mu(\bar z) z \cdot z \ge \lambda \abs{z}^2$ for 
all $\bar z, z\in \R^2$. Therefore, writing
$$
F^\mu(z_2)-F^\mu(z_1) 
=\int_0^1 \nabla F^\mu(\theta z_2+(1-\theta)z_1)(z_2-z_1)\,d\theta,
$$
it follows that
$$
\left(F^\mu(z_2)-F^\mu(z_1)\right)
\cdot (z_2-z_1)\geq
-\lambda\abs{z_2-z_1}^2.
$$
More explicitly, assumption \eqref{uniq-cond1} implies 
the following ``dissipative structure" on a suitable 
linear combination of $I$ and $H$:
\begin{align*}
	&\mu \left(I(v_2,w_2)-I(v_1,w_2)\right)(v_2-v_1) 
	-\left(H(v_2,w_2)-H(v_1,w_1)\right)(w_2-w_1) 
	\\ & \qquad 
	\geq -\lambda\left(\abs{v_2-v_1}^2+\abs{w_2-w_1}^2\right), 
	\qquad \forall v_1,v_2,w_1,w_2 \in \R.
\end{align*}
This inequality implies the $L^2$ stability (and thus 
uniqueness) of weak solutions.

\begin{remark}
There are many membrane models of cardiac cells \cite{ColliBook,Sundnes}. 
We utilize the FitzHugh-Nagumo model \cite{FitzHugh1955}, 
which is a simplification of the Hodgin-Huxley model 
of voltage-gated ion channels. It is possible to treat other membrane models 
by blending the arguments used herein with those found in 
\cite{Boulakia2008,Bourgault,ColliBook,Sundnes,Veneroni,Veneroni:2009aa}.
\end{remark}

As a natural assumption for homogenization, we assume 
that the $\e$-dependence of the conductivities 
$\sigma_j^\e$ ($j=i,e$), the applied currents 
$s^{\e}_j$ ($j=i,e$), and the initial 
data $v_0^{\e}, w_0^{\e}$ decouples into 
a ``fast" and a ``slow" variable:
\begin{equation}\label{fastSlow}
	\begin{split}
		&\sigma_j^{\e}(x) = \sigma_j\left(x,\frac{x}{\e}\right),
		\quad 
		s^{\e}_j(x) = s_j\left(x,\frac{x}{\e}\right), 
		\\ & 
		v_0^{\e}(x) = v_0\left(x,\frac{x}{\e}\right),
		\quad
		w_0^{\e}(x) = w_0\left(x,\frac{x}{\e}\right),
	\end{split}
\end{equation} 
for some fixed functions $\sigma_j(x,y),s_j(x,y),v_0(x,y),w_0(x,y)$ 
that are $Y$-periodic in the second argument. 
 
The conductivity tensors are assumed to be bounded and continuous,
\begin{equation}\label{eq:matrix1}
	\sigma_j(x,y) \in L^\infty(\O\times Y_j)\cap 
	C\left(\Omega; C_{\mathrm{per}}(Y_j)\right), 
\end{equation}
and satisfy the usual ellipticity 
condition, i.e., there exists $\alpha >0$ such that
\begin{equation}\label{eq:matrix2}
	\eta \cdot \sigma_j(x,y) \eta \geq \alpha |\eta|^2,  \quad 
	\forall \eta \in \R^3, \; \forall (x,y)\in \O\times Y_j,
\end{equation}
for $j=i,e$. Finally, we assume that each $\sigma_j$ is symmetric: 
$\sigma_j^T= \sigma_j$.

The regularity assumption \eqref{eq:matrix1} 
implies that $\sigma_j$ is an admissible test function 
for two-scale convergence \cite{Allaire}, which means that
$$
\lim_{\e \rightarrow 0}\int_{\Oj}  \sigma^{\e}_j(x) \p^{\e}(x) \, dx 
= \int_{\O \times Y_j} \sigma(x,y)\p(x,y) \, dx\, dy, 
\qquad j=i,e,
$$
for every two-scale convergent sequence $\{\p^{\e}\}_{\e>0}$, 
$\p^{\e}\overset{2}{\rightharpoonup} \p$. 
This convergence still holds if the second part of \eqref{eq:matrix1} 
is replaced by $\sigma_j \in L^2\left(\Omega; C_{\mathrm{per}}(Y_j)\right)$.

For the stimulation currents we assume the compatibility condition
\begin{equation}\label{assumptionCompatibility}
	\sum_{j=i,e} \int_{\Oj} s_j^\e\,dx =0,
\end{equation} 
and the boundedness in $L^2$:
\begin{equation}\label{assumptionStimulation}
	\norm{s_j^{\e}}_{L^2((0,T)\times \Oj)}\leq C, 
	\qquad j=i,e,
\end{equation}
which, in view of \eqref{fastSlow}, is 
guaranteed if we take 
\cite[p.~174]{Donato}
\begin{equation}\label{sj-ass}
	s_j \in L^2(\O ; C_{\text{per}}(Y_j)).
\end{equation}
Similarly, we assume that 
\begin{equation}\label{assumptionInitial}
	v_0, w_0 \in L^2(\O ; C_{\text{per}}(\G)).
\end{equation}

Throughout this paper we denote by $C$ a generic constant, not depending 
on the parameter $\e$. The actual value 
of $C$ may change from one line to the next.

\subsection{Weak solutions}
Testing \eqref{micro} against appropriate functions 
$\p_i,\p_e,\p_w$ we obtain the weak formulation 
of the microscopic bidomain model \eqref{micro}, \eqref{eq:ib-cond}, 
cf.~\cite{Colli,Veneroni} for details. 
We note that the terms involving the boundary $\partial \O$ 
vanish due to the Neumann boundary condition \eqref{eq:ib-cond}.

\begin{definition}[weak formulation of microscopic system]\label{weak}
A weak solution to \eqref{micro}, \eqref{eq:ib-cond} is a 
collection $(u_i^{\e},u_e^{\e},v^{\e},w^{\e})$ of functions 
satisfying the following conditions:
\begin{enumerate}[i]
	\item (algebraic relation).
	\begin{equation}\label{ve-def}
		v^{\e}= u_i^{\e}\big|_{\Ge} - u_e^{\e}\big|_{\Ge}
		\quad \text{a.e.~on $(0,T)\times\Ge$}.
	\end{equation}

\item (regularity). 
	\begin{align*}
		& u_j^{\e} \in L^2(0,T;H^1(\Oj)), \quad j=i,e, \\
		& \int_{\Oe} u_e^\e(t,x)\, dx = 0, \quad t\in (0,T), \\
		& v^{\e} \in L^2(0,T; H^{1/2}(\Ge))\cap L^4((0,T)\times \Ge), \\ 
		& \dt v \in L^2(0,T;H^{-1/2}(\Ge))+L^{4/3}((0,T)\times \Ge), \\
		& w^{\e} \in H^1(0,T;L^2(\Ge)).
	\end{align*}
		
\item (initial conditions).
	\begin{equation}\label{eq:init}
		v^{\e}(0)=v_0^{\e}, \quad w^{\e}(0) = w_0^{\e}.
	\end{equation}

\item (differential equations).
	\begin{equation}\label{weak_i}
		\begin{split}
			& \e \int_0^T \left\langle \dt v^{\e},
			\p_i\right\rangle \, dt 
			+  \int_0^T \int_{\Oi}\sigma_i^{\e} \nabla u_i^{\e} 
			\cdot \nabla \p_i\, dx \, dt
			\\ & \qquad\quad  
			+ \e \int_0^T \int_{\Ge} I(v^{\e},w^{\e})\p_i \,dS \, dt 
			= \int_0^T \int_{\Oi} s^{\e}_{i} \p_i \, dx \, dt,
		\end{split}
	\end{equation}
	\begin{equation}\label{weak_e}
		\begin{split}
			& \e \int_0^T \left\langle\dt v^{\e},  
			\p_e\right\rangle \, dt
			- \int_0^T\int_{\Oe}\sigma_e^{\e} \nabla u_e^{\e} 
			\cdot \nabla \p_e \, dx \, dt
			\\ & \qquad\quad  
			+ \e \int_0^T \int_{\Ge} I(v^{\e},w^{\e})\p_e \,dS \, dt  
			= -\int_0^T \int_{\Oe} s^{\e}_{e} \p_e \, dx \, dt,
		\end{split}
	\end{equation}
	\begin{equation}\label{weak_w}
		\int_{\Ge} \dt w^{\e} \p_w\, dx 
		= \int_{\Ge} H(v^{\e},w^{\e}) \p_w \,dS,
	\end{equation}
\end{enumerate}
for all $\p_j\in L^2(0,T;H^1(\Oj))$ with 
$\p_j\in L^4((0,T)\times \Ge)$ ($j=i,e$), and for 
all $\p_w\in L^2(0,T;L^2(\Ge))$. 
\end{definition}

\begin{remark}\label{rem:well-defined}
In \eqref{weak_i}, \eqref{weak_e} we use $\left\langle \cdot, \cdot \right \rangle$ 
to denote the duality pairing between $H^{-1/2}(\Ge)+L^{4/3}(\Ge)$ 
and $H^{1/2}(\Ge)\cap L^4(\Ge)$. For a motivation of the regularity conditions 
in Definition \ref{weak}, see Remark \ref{rem:motivate-regularity} below. 

By the embedding \eqref{cont-embeding}, $v^{\e},w^{\e}\in C\left(0,T;L^2(\Ge) \right)$ 
and therefore the pointwise evaluations $v(0)$, $w(0)$ in \eqref{eq:init} are well defined.  
The time derivative $\partial_t v^\e$ is a distribution belonging to 
$L^2(0,T;H^{-1/2}(\Ge))+L^{4/3}((0,T)\times \Ge)$ 
with initial values $v^\e_{0}$, so that the integration-by-parts 
formula \eqref{eq:int-by-parts-general} holds. Consequently, we may replace 
$$
\e \int_0^T \left\langle \dt v^{\e},
\p_j\right\rangle \, dt, \quad j=i,e,
$$
in \eqref{weak_i} and \eqref{weak_e} by 
\begin{equation}\label{weak-form-ibp}
	-\e \int_{0}^{T} \int_{\Ge} v^\e \, \partial_{t} 
	\p_j \, dS(x)\,dt - \e\int_{\Ge}v^\e_0\, \p_j(0) \, dS(x)\,dt,
\end{equation}
for all test functions $\p_j \in C^\infty_0([0,T)\times \Oj)$, or  for all 
$\p_j\in L^2(0,T;H^1(\Oj))$ such that $\dt \p_j\in L^2((0,T)\times \Ge)$, 
$\p_j\in L^4((0,T)\times \Ge)$, and $\p_j(T)=0$. Later, when passing to 
the limit $\e\to 0$ in \eqref{weak_i} and \eqref{weak_e}, 
we make use of the form \eqref{weak-form-ibp}.
\end{remark}

\begin{remark}\label{weak:welldef}
Consider a weak solution $(u_i^\e,u_e^\e,v^{\e},w^{\e})$ 
according to Definition \ref{weak}. 
Thanks to \eqref{trace1}, the trace of $\p_j \in L^2(0,T;H^1(\Oj))$
belongs to $L^2(0,T;H^{1/2}(\Ge))$. By the 
Sobolev inequality \eqref{SobolevInequality}, the trace of 
$\p_j$ belongs also to $L^2(0,T;L^4(\Ge))$. 
In Definition \ref{weak} we ask additionally that 
the trace of $\p_j$ belongs to $L^4((0,T)\times \Ge)$ to ensure that the 
surface terms in \eqref{weak_i}, \eqref{weak_e} are well-defined

Indeed, $\int_0^T \left\langle \dt v^{\e}, \p_j\right\rangle \, dt$ 
is well-defined for such $\p_j$. 
Moreover,
\begin{align*}
	J:=\abs{\int_0^T\int_{\Ge} I(v^{\e},w^{\e}) \p_j\, dS\, dt}
	& \leq \norm{I(v^{\e},w^{\e})}_{L^{4/3}((0,T)\times \Ge)} 
	\norm{\p_j}_{L^4((0,T)\times \Ge)}.
\end{align*}
For the membrane model \eqref{GFHN}, the 
growth condition on $I$ implies
\begin{equation}\label{I43}
	\abs{I(v,w)}^{\frac43} 
	\le C_I\left(1+\abs{v}^4+ \abs{w}^2\right),
\end{equation}
and therefore $\norm{I(v^{\e},w^{\e})}_{L^{4/3}((0,T)\times \Ge)}^{4/3}<\infty$. 
Consequently, $J<\infty$. 

We actually have a more precise bound. 
As $H^{1/2}(\Ge)\subset L^4(\Ge)$, we have 
$H^{-1/2}(\Ge)\subset L^{4/3}(\Ge)$ and 
$$
\norm{I(v^{\e},w^{\e})}_{H^{-1/2}(\Ge)}^{4/3}
\le \tilde C \norm{I(v^{\e},w^{\e})}_{L^{4/3}(\Ge)}^{4/3}
\le C \left(1+\norm{v^{\e}}_{L^4(\Ge)}^4
+\norm{w^{\e}}_{L^2(\Ge)}^2\right).
$$
Integrating this over $t\in (0,T)$ yields 
$I(v^{\e},w^{\e})\in L^{4/3}(0,T;H^{-1/2}(\Ge))$.

The integral on the right-hand side of \eqref{weak_w} 
can be treated similarly, since $\abs{H(v,w)}^2 
\le C\left(\abs{v}^4+\abs{w}^2\right)$. 
The remaining integrals are trivially well defined.
\end{remark}

\subsection{Existence of solution and a priori estimates}
Existence and uniqueness results for certain classes of 
membrane models have been established in \cite{Colli,Veneroni}.
These works employ the variable $v=u_i-u_e$ to convert \eqref{micro} into 
a non-degenerate ``abstract" parabolic equation. 
The authors in \cite{Colli} then appeal to the theory of  
variational inequalities, whereas in \cite{Veneroni} the 
Schauder fixed point theorem is applied 
to conclude the existence of a solution.

The following theorem can be proved 
by adapting arguments found in \cite{Colli,Veneroni} (see 
also \cite{Grandelius:2017aa}), or those 
utilized in \cite{Bourgault,Karlsen,Andreianov:2010uq} for 
the (macroscopic) bidomain model.

\begin{theorem}[existence of weak solution for microscopic system]\label{thm:micro}
Fix any $\e>0$, and suppose \eqref{GFHN}, \eqref{def:domain},
\eqref{fastSlow}, \eqref{eq:matrix1}, \eqref{eq:matrix2}, 
\eqref{assumptionCompatibility}, \eqref{sj-ass}, and 
\eqref{assumptionInitial} hold. Then the microscopic bidomain 
model \eqref{micro}, \eqref{eq:ib-cond} possesses a weak 
solution $(u_i^{\e},u_e^{\e},v^{\e},w^{\e})$ (in the sense 
of Definition \ref{weak}). This weak solution 
satisfies the a priori estimates collected in 
Lemma \ref{lemma:energy} below.
\end{theorem}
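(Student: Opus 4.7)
The natural strategy is a Faedo--Galerkin approximation that respects the hybrid character of the system: the bulk equations in $\Oi,\Oe$ are elliptic, whereas the time dynamics live on the lower-dimensional membrane $\Ge$. First I would introduce the closed subspace
$$V^{\e}=\seq{(\phi_i,\phi_e)\in H^1(\Oi)\times H^1(\Oe)\, :\, \textstyle\int_{\Oe}\phi_e\dx=0}$$
and select a countable family $\seq{(\psi_k^i,\psi_k^e)}_{k\ge 1}\subset V^{\e}$ whose linear span is dense in $V^{\e}$ and whose transmembrane traces $\psi_k^i|_{\Ge}-\psi_k^e|_{\Ge}$ are dense in $H^{1/2}(\Ge)$. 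Let $V_n$ denote the span of the first $n$ elements. Seeking $(u_i^n(t),u_e^n(t))\in V_n$ and $w^n(t)$ in the span of the corresponding transmembrane traces, and writing $v^n=u_i^n|_{\Ge}-u_e^n|_{\Ge}$, I would impose the Galerkin versions of \eqref{weak_i}--\eqref{weak_w} against test functions from $V_n$.

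By the ellipticity \eqref{eq:matrix2} and the Poincaré inequality \eqref{Poincare}, the bilinear form
$$a\bigl((\phi_i,\phi_e),(\tilde\phi_i,\tilde\phi_e)\bigr)=\int_{\Oi}\sigma_i^{\e}\nabla\phi_i\cdot\nabla\tilde\phi_i\dx-\int_{\Oe}\sigma_e^{\e}\nabla\phi_e\cdot\nabla\tilde\phi_e\dx$$
together with the $L^2(\Ge)$ pairing on transmembrane traces yields a symmetric positive definite Gram matrix on $V_n$. Inverting it converts the Galerkin system into an ODE with locally Lipschitz right-hand side on $[0,T]$, giving short-time existence.

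A priori estimates come from testing \eqref{weak_i} with $\phi_i=u_i^n$, \eqref{weak_e} with $\phi_e=-u_e^n$, and \eqref{weak_w} with $\mu w^n$ (where $\mu$ is the constant from \eqref{uniq-cond1}); summing, invoking the chain rule \eqref{eq:chain-rule}, and applying the structural inequality \eqref{IH-conseq} gives
$$\tfrac{\e}{2}\dt\norm{v^n}_{L^2(\Ge)}^2+\tfrac{\mu}{2}\dt\norm{w^n}_{L^2(\Ge)}^2+\alpha\sum_{j=i,e}\norm{\nabla u_j^n}_{L^2(\Oj)}^2+\e\gamma\norm{v^n}_{L^4(\Ge)}^4$$
controlled by lower-order $L^2$ quantities plus the stimulation terms. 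Grönwall's inequality, the normalization $\int_{\Oe}u_e^n=0$, and the Poincaré inequality \eqref{Poincare} furnish, uniformly in $n$ (with constants that may depend on $\e$), bounds of $u_j^n$ in $L^2(0,T;H^1(\Oj))$, of $v^n$ in $L^\infty(0,T;L^2(\Ge))\cap L^4((0,T)\times\Ge)$, and of $w^n$ in $L^\infty(0,T;L^2(\Ge))$. The trace inequality \eqref{trace1} then gives $v^n$ bounded in $L^2(0,T;H^{1/2}(\Ge))$, and the growth bound \eqref{I43} yields $I(v^n,w^n)\in L^{4/3}(0,T;H^{-1/2}(\Ge))$ and hence $\dt v^n$ bounded in $L^2(0,T;H^{-1/2}(\Ge))+L^{4/3}((0,T)\times\Ge)$. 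Global existence on $[0,T]$ follows.

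For the passage to the limit $n\to\infty$, the crux is compactness for $v^n$, since the cubic nonlinearity $I(v,w)v$ cannot be handled by weak convergence alone. I would invoke the generalization of Aubin--Lions due to Simon \cite{Simon:1987vn} with $X_1=H^{1/2}(\Ge)$, $X_0=L^2(\Ge)$, $X_{-1}=H^{-1/2}(\Ge)+L^{4/3}(\Ge)$ (the triple used in \S\ref{sec:notation}) to extract a subsequence with $v^n\to v^{\e}$ strongly in $L^2((0,T)\times\Ge)$ and a.e.; the ODE representation of $w^n$ then transfers the convergence to $w^n\to w^{\e}$. Combined with the uniform $L^4$ bound on $v^n$ and the polynomial growth of $I,H$, Vitali's convergence theorem lets me pass to the limit in the nonlinear surface integrals, while weak convergence $u_j^n\weak u_j^{\e}$ in $L^2(0,T;H^1(\Oj))$ handles the linear terms. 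The embedding \eqref{cont-embeding} secures the initial conditions \eqref{eq:init}. The main technical obstacle throughout is precisely this combination of strong $L^2$-compactness on the oscillating surface with the $L^4$-growth of $I$; everything else is bookkeeping that follows the template of \cite{Colli,Veneroni,Bourgault}. The a priori estimates collected in Lemma \ref{lemma:energy}, whose explicit $\e$-dependence is the essential novelty needed for homogenization, are obtained by repeating the test-function calculation above at the level of the weak solution and tracking the factors of $\e$.
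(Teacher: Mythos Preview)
The paper does not actually prove this theorem: it states that the result ``can be proved by adapting arguments found in \cite{Colli,Veneroni}'' or those in \cite{Bourgault,Karlsen,Andreianov:2010uq}, and then moves directly to the a~priori estimates in Lemma~\ref{lemma:energy}. Your Faedo--Galerkin outline is consistent with the approach of those references and correctly identifies the key steps (energy estimate via \eqref{IH-conseq}, Simon--Aubin--Lions compactness for $v^n$ on $\Ge$, Vitali for the cubic nonlinearity).

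One point deserves more care. You claim that the bilinear form $a(\cdot,\cdot)$ ``together with the $L^2(\Ge)$ pairing on transmembrane traces yields a symmetric positive definite Gram matrix on $V_n$,'' and that inverting it produces an ODE. But in the Galerkin system the mass matrix multiplying the time derivative involves \emph{only} the $L^2(\Ge)$ pairing of the transmembrane traces $\psi_k^i|_{\Ge}-\psi_k^e|_{\Ge}$; the elliptic form $a$ sits on the other side. This mass matrix is merely positive semidefinite on $V_n$ unless those traces are linearly independent, which your choice of basis does not guarantee. This is precisely the degenerate temporal structure of the microscopic system (time derivatives live only on $\Ge$, not in the bulk), and it is why \cite{Colli,Veneroni} reformulate the problem with $v$ as the primary dynamical unknown, recovering $u_i^\e,u_e^\e$ at each time by solving the decoupled elliptic problems with Neumann data prescribed by $v$. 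Either adopt that reformulation, or choose your Galerkin basis so that it splits into a part with linearly independent transmembrane traces (carrying the dynamics) and a ``static'' complement on which the elliptic form is coercive; the resulting system is then an index-one DAE that reduces to a genuine ODE in the dynamic coordinates. With this adjustment your sketch goes through.
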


\begin{remark}\label{rem:uniq}
The unknowns $u_i^\e,u_e^\e$ are determined 
uniquely up to a constant. We can fix this constant by imposing the 
normalization condition $\int u_e^\e\, dx = 0$, as 
is done in Definition \ref{weak}. Weak solutions are 
unique (and $L^2$ stable) provided we impose the structural 
condition \eqref{uniq-cond1} on the membrane functions $I,H$.
\end{remark}

The next lemma, which is utilized below to derive 
some a priori estimates, is a consequence of the 
uniform Poincar\'{e} inequality \eqref{Poincare} 
and the trace inequality for $\e$-periodic surfaces \eqref{trace}. 
A similar result is used in \cite{Pennacchio2005}.

\begin{lemma}\label{L2-uie-est}
Let $u_j\in H^1(\Oj)$, $j=i,e$, $\int_{\Oe} u_e \,dx =0$, 
and set $v:=u_i\big|_{\Ge}-u_e\big|_{\Ge}$.
There is a positive constants $C$, independent of $\e$, such that
\begin{align*} 
	\norm{u_i}_{L^2(\Oi)}^2 
	\leq C \left(\e\norm{v}_{L^2(\Ge)}^2
	+\sum_{j=i,e}\int_{\Oj} \abs{\nabla u_j}^2\,dx  \right).
\end{align*}
\end{lemma}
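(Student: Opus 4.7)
The plan is to split $u_i$ into its mean and its mean-zero part on $\Oi$, handle the mean-zero part via the uniform Poincar\'e inequality \eqref{Poincare}, and estimate the mean $\bar u_i:=\frac{1}{|\Oi|}\int_{\Oi}u_i\dx$ by exploiting the algebraic relation $v=u_i-u_e$ on $\Ge$ together with the normalization $\int_{\Oe}u_e\dx=0$ and the $\e$-trace inequality \eqref{trace}.

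First I would write $u_i=\bar u_i+(u_i-\bar u_i)$, so that
\begin{equation*}
\norm{u_i}_{L^2(\Oi)}^2\le 2\abs{\Oi}\,\bar u_i^{\,2}+2\norm{u_i-\bar u_i}_{L^2(\Oi)}^2,
\end{equation*}
and the second term is controlled by $C\norm{\nabla u_i}_{L^2(\Oi)}^2$ via \eqref{Poincare}. The main work is therefore to bound $\abs{\Oi}\,\bar u_i^{\,2}$ by the right-hand side of the claimed inequality.

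Next I would integrate $v=u_i-u_e$ over $\Ge$, insert the decompositions $u_i=\bar u_i+(u_i-\bar u_i)$ and $u_e=(u_e-\bar u_e)$ (using $\bar u_e=0$), obtaining the identity
\begin{equation*}
\bar u_i\,\abs{\Ge}=\int_{\Ge}v\dS-\int_{\Ge}(u_i-\bar u_i)\dS+\int_{\Ge}u_e\dS.
\end{equation*}
Squaring, applying Cauchy--Schwarz to each boundary integral, and using $\abs{\Ge}\sim \e^{-1}$ to cancel one factor of $\abs{\Ge}$ with $\abs{\Ge}^2$, I would arrive at
\begin{equation*}
\bar u_i^{\,2}\le C\e\Bigl(\norm{v}_{L^2(\Ge)}^2+\norm{(u_i-\bar u_i)|_{\Ge}}_{L^2(\Ge)}^2+\norm{u_e|_{\Ge}}_{L^2(\Ge)}^2\Bigr).
\end{equation*}
The trace inequality \eqref{trace} applied to $u_i-\bar u_i\in H^1(\Oi)$ and to $u_e\in H^1(\Oe)$, combined once more with \eqref{Poincare} (on $\Oi$ for $u_i-\bar u_i$ and on $\Oe$ for $u_e$, the latter using $\bar u_e=0$), converts the last two terms into $C\bigl(\norm{\nabla u_i}_{L^2(\Oi)}^2+\norm{\nabla u_e}_{L^2(\Oe)}^2\bigr)$.

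The main obstacle, which is purely a bookkeeping one, is tracking the $\e$-dependence: the factor $\e$ in front of $\norm{v}_{L^2(\Ge)}^2$ in the conclusion comes precisely from $\e\cdot\abs{\Ge}=O(1)$, while the absence of any $\e$-factor in front of the gradient terms relies on the $\e^2$ in \eqref{trace} being harmless since $\e$ is bounded. Assembling the two displayed bounds and using $\abs{\Oi}=O(1)$ then yields the stated inequality with a constant independent of $\e$.
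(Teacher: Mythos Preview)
Your proof is correct and follows essentially the same route as the paper: decompose $u_i$ into its mean $\bar u_i$ and zero-mean part, control the latter by the uniform Poincar\'e inequality \eqref{Poincare}, and bound $\bar u_i$ via the relation $v=u_i-u_e$ on $\Ge$ together with $\e\abs{\Ge}\ge c>0$, the $\e$-trace inequality \eqref{trace}, and Poincar\'e applied to $u_i-\bar u_i$ and to $u_e$. The only cosmetic difference is that the paper uses the pointwise inequality $\abs{\bar u_i}^2\le C\bigl(\abs{v}^2+\abs{u_i-\bar u_i}^2+\abs{u_e}^2\bigr)$ and then integrates over $\Ge$, whereas you integrate the identity $v=u_i-u_e$ over $\Ge$ first and then square; both lead to the same estimate.
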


\begin{proof}
First, since $\int u_e =0$, 
\begin{equation}\label{poincare-ue}
	\norm{u_e}_{L^2(\Oe)}^2
	\overset{\eqref{Poincare}}{\le} 
	C_1\int_{\Oe} \abs{\nabla u_e}^2\, dx.
\end{equation}
To estimate the $L^2$ norm of $u_i$, write 
$u_i=\bar{u}_i+\tilde{u}_i$, where 
$\bar{u}_i:=\frac{1}{\abs{\Oi}}\int_{\Oi} u_i\, dx$ 
is constant in $\Oi$ and $\tilde{u}_i:=u_i-\bar{u}_i$ 
has zero mean in $\Oi$. Clearly,
$$
\norm{u_i}_{L^2(\Oj)}^2 
=\norm{\bar{u}_i}_{L^2(\Oi)}^2
+\norm{\tilde{u}_i}_{L^2(\Oi)}^2.
$$
In view of the Poincar\'{e} inequality \eqref{Poincare},
\begin{equation}\label{poincare-tui}
	\norm{\tilde{u}_i}_{L^2(\Oi)}^2
	\le \tilde C_1 \int_{\Oi} \abs{\nabla \tilde u_i}^2\, dx
	=\tilde C_1 \int_{\Oi} \abs{\nabla u_i}^2\, dx.
\end{equation}
Let us bound 
$\norm{\bar{u}_i}_{L^2(\Oi)}^2
=\frac{\abs{\Oi}}{\abs{\Ge}} \norm{\bar{u}_i}_{L^2(\Ge)}^2$. 
Since $\abs{\Ge}=\abs{K^{\e}} \int_{\e\G} \,dS 
= \e^{-3}\e^2 \int_{\G}\,dS$ (recall \eqref{domains-escaled}), 
$\e \left|\Ge\right| \geq c >0$. 
Because of this and $\abs{\Oi}\le \abs{\O}$, 
$$
\norm{\bar{u}_i}_{L^2(\Oi)}^2
\le \bar C_1 \e \norm{\bar{u}_i}_{L^2(\Ge)}^2.
$$
Noting that
$$
\abs{\bar{u}_i}^2
\le \bar C_2\left(\abs{u_i - u_e}^2 +  
\abs{\tilde u_i}^2 + \abs{u_e}^2\right),
$$
we obtain 
\begin{align*}
	\norm{\bar{u}_i}_{L^2(\Oi)}^2
	& \le \bar C_3 
	\left( \e  \norm{v}_{L^2(\Ge)}^2
	+ \e \norm{\tilde u_i}_{L^2(\Ge)}^2
	+ \e \norm{u_e}_{L^2(\Ge)}^2 \right)
	\\ & 
	\overset{\eqref{trace}}{\le} 
	\bar C_3 \e\norm{v}_{L^2(\Ge)}^2
	\\ & \qquad \quad 
	+ \bar C_4 \left(
	\norm{\tilde u_i}_{L^2(\Oi)}^2+
	\e^2 \norm{\nabla \tilde u_i}_{L^2(\Oi)}^2\right)
	\\ & \qquad \quad \quad
	+ \bar C_4 \left(
	\norm{u_e}_{L^2(\Oe)}^2+
	\e^2 \norm{\nabla u_e}_{L^2(\Oe)}^2\right)
	\\ & \le 
	\bar C_3 \e\norm{v}_{L^2(\Ge)}^2
	+ \bar C_5\norm{\nabla u_i}_{L^2(\Oi)}^2
	+ \bar C_5 \norm{\nabla u_e}_{L^2(\Oe)}^2,
\end{align*}
where the final inequality is a result 
of \eqref{poincare-ue} and \eqref{poincare-tui}.
\end{proof}

For the sake of the upcoming homogenization result, 
we now list some precise ($\e$-independent) 
a priori estimates.

\begin{lemma}[basic estimates for microscopic system]\label{lemma:energy}
Referring Theorem \ref{thm:micro}, the weak solution 
$(u_i^{\e},u_e^{\e},v^{\e},w^{\e})$ of 
\eqref{micro}, \eqref{eq:ib-cond} satisfies 
the a priori estimates
\begin{equation}\label{eq:main-est}
	\begin{split}
		& (a)\; \norm{\nabla u_j^\e}_{L^2\left((0,T)\times \Oj\right)} \le C,
		\quad j=i,e,
		\\
		& (b)\; \norm{u_j^\e}_{L^2\left((0,T)\times \Oj\right)} \le C, 
		\quad j=i,e,
		\\ & 
		(c)\; \e^{1/2} \norm{v^\e}_{L^{\infty}(0,T; L^2(\Ge))} \le C,
		\\ & 
		(d)\; \e^{1/4}\norm{v^\e}_{L^{4}((0,T)\times \Ge)}\le C,
		\\ & 
		(e)\; \e^{1/2}\norm{w^\e}_{L^{\infty}(0,T;L^2(\Ge))}\le C, 
		\\ &
		(f)\; \e^{1/2}\dt w^\e \in L^2(0,T;L^2(\Ge)),
	\end{split}
\end{equation}
where $C$ is a positive constant independent of $\e$.
\end{lemma}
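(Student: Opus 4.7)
The plan is to derive the estimates by a coupled energy argument, testing the three weak equations with natural choices and combining them. Specifically, in \eqref{weak_i} I would take $\p_i=u_i^\e$, in \eqref{weak_e} take $\p_e=u_e^\e$, and subtract the second from the first. Since $v^\e=u_i^\e-u_e^\e$ on $\Ge$, the two $\e\langle\dt v^\e,\cdot\rangle$ terms combine into $\e\int_0^t\langle\dt v^\e,v^\e\rangle\,d\tau$, which by the chain rule \eqref{eq:chain-rule} equals $\tfrac{\e}{2}\|v^\e(t)\|_{L^2(\Ge)}^2-\tfrac{\e}{2}\|v_0^\e\|_{L^2(\Ge)}^2$. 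Similarly, the surface integrals involving $I$ combine into $\e\int\int I(v^\e,w^\e)v^\e$. Testing the ODE \eqref{weak_w} with $w^\e$ and multiplying by $\e$ gives $\tfrac{\e}{2}\|w^\e(t)\|_{L^2(\Ge)}^2$ on the left and $\e\int\int H(v^\e,w^\e)w^\e$ on the right. Subtracting this from the previous identity produces precisely the combination $\e\int\int\bigl(v^\e I(v^\e,w^\e)-w^\e H(v^\e,w^\e)\bigr)\,dS\,d\tau$, which by \eqref{IH-conseq} is bounded below by $\gamma\e\int\int|v^\e|^4-\beta\e\int\int(|v^\e|^2+|w^\e|^2)$.

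After using ellipticity \eqref{eq:matrix2} on the diffusion terms, the identity becomes
\begin{align*}
	&\tfrac{\e}{2}\|v^\e(t)\|_{L^2(\Ge)}^2+\tfrac{\e}{2}\|w^\e(t)\|_{L^2(\Ge)}^2+\alpha\sum_{j=i,e}\int_0^t\|\nabla u_j^\e\|_{L^2(\Oj)}^2\,d\tau+\gamma\e\int_0^t\!\!\int_{\Ge}|v^\e|^4\,dS\,d\tau\\
	&\qquad\le \text{(initial data)}+\beta\e\int_0^t\!\!\int_{\Ge}\bigl(|v^\e|^2+|w^\e|^2\bigr)\,dS\,d\tau+\sum_{j=i,e}\int_0^t\!\!\int_{\Oj}s_j^\e u_j^\e\,dx\,d\tau.
\end{align*}
The initial boundary data are controlled $\e$-independently by \eqref{assumptionInitial} and the trace inequality \eqref{trace} (so that $\e\|v_0^\e\|_{L^2(\Ge)}^2$ and $\e\|w_0^\e\|_{L^2(\Ge)}^2$ are bounded).

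The crux of the argument is absorbing the forcing term $\sum_j\int s_j^\e u_j^\e$. Using Cauchy's inequality with a small parameter, I split it as $\tfrac{1}{2\eta}\|s_j^\e\|_{L^2(\Oj)}^2+\tfrac{\eta}{2}\|u_j^\e\|_{L^2(\Oj)}^2$; the first factor is bounded by \eqref{assumptionStimulation}. For the $L^2$ norms of $u_j^\e$ I invoke Lemma \ref{L2-uie-est}, which gives $\|u_i^\e\|_{L^2(\Oi)}^2+\|u_e^\e\|_{L^2(\Oe)}^2\le C\bigl(\e\|v^\e\|_{L^2(\Ge)}^2+\sum_j\|\nabla u_j^\e\|_{L^2(\Oj)}^2\bigr)$. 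Choosing $\eta$ small enough absorbs the $\|\nabla u_j^\e\|_{L^2}^2$ contribution into the ellipticity term on the left, while the $\e\|v^\e\|_{L^2(\Ge)}^2$ contribution joins the $\beta$-term, leaving an integral inequality of Gronwall type in the quantity $\e\|v^\e(t)\|_{L^2(\Ge)}^2+\e\|w^\e(t)\|_{L^2(\Ge)}^2$. Gronwall's lemma then yields (c) and (e) uniformly in $t$; feeding these back gives (a) and (d), and (b) follows by combining (a), (c) with Lemma \ref{L2-uie-est} once more (together with Poincar\'e \eqref{Poincare} for $u_e^\e$).

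For (f), I would test \eqref{weak_w} with $\dt w^\e$ itself to get $\|\dt w^\e\|_{L^2(\Ge)}\le \|H(v^\e,w^\e)\|_{L^2(\Ge)}$. The growth condition on $H$ in \eqref{GFHN} gives $|H(v,w)|^2\le C(1+|v|^4+|w|^2)$, so multiplying by $\e$ and integrating in $t$,
\begin{equation*}
	\e\|\dt w^\e\|_{L^2((0,T)\times\Ge)}^2\le C\Bigl(\e|\Ge|\,T+\e\|v^\e\|_{L^4((0,T)\times\Ge)}^4+\e\|w^\e\|_{L^2((0,T)\times\Ge)}^2\Bigr),
\end{equation*}
and the right-hand side is bounded by (d) and (e), together with the fact that $\e|\Ge|\sim 1$.

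The main obstacle I anticipate is the coupling in the forcing estimate: because the time derivative only enters through the trace $v^\e$ on $\Ge$, the natural energy argument does not directly control $\|u_i^\e\|_{L^2(\Oi)}$, and one must close the loop via Lemma \ref{L2-uie-est} while being careful that the small-parameter absorption of $\|\nabla u_j^\e\|_{L^2}^2$ and $\e\|v^\e\|_{L^2(\Ge)}^2$ is consistent with the Gronwall step. Everything else is bookkeeping with the chain rule \eqref{eq:chain-rule} and the trace inequality \eqref{trace}.
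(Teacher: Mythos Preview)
Your proposal is correct and follows essentially the same route as the paper: test with $(u_i^\e,-u_e^\e,w^\e)$ (your ``subtract \eqref{weak_e} with $\p_e=u_e^\e$'' is equivalent), combine, apply the chain rule \eqref{eq:chain-rule} and \eqref{IH-conseq}, absorb the source via Cauchy's inequality together with Lemma~\ref{L2-uie-est}, and close by Gr\"onwall; part (f) is then read off from the ODE and the growth of $H$. One wording quibble: to land on the combination $vI(v,w)-wH(v,w)$ with the correct sign on $\tfrac{\e}{2}\|w^\e(t)\|^2$, you should \emph{add} the (integrated) $w$-identity rather than subtract it---your displayed inequality is already the right one, so this is only a slip in the narrative.
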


\begin{proof}
We only \textit{outline} a proof of these (mostly standard) estimates. 

Specifying $(\p_i, \p_e,\p_w) = (u_i^{\e},-u_e^{\e},w^{\e})$ 
as test functions in \eqref{weak_i}, \eqref{weak_e}, and \eqref{weak_w}, 
adding the resulting equations, applying the 
chain rule \eqref{eq:chain-rule}, and using \eqref{eq:matrix2}, we obtain
\begin{equation}\label{vL2}
	\begin{split}
		&\frac{\e}{2}\frac{d}{dt} \left( \norm{v^{\e}}_{L^2(\Ge)}^2 
		+\norm{w^{\e}}_{L^2(\Ge)}^2 \right) 
		+\alpha \sum_{j=i,e}  \int_{\Oj} \abs{\nabla u_j^{\e}}^2\, dx, 
		\\ &\qquad \quad 
		+ \e \int_{\Ge} \left(I(v^{\e},w^{\e}) v^{\e}
		-H(v^{\e},w^{\e})w^{\e} \right)\,dS 
		\le \sum_{j=i,e}\int_{\Oj} s_{j}^\e u_j^{\e} \, dx.
	\end{split}
\end{equation}

Thanks to \eqref{IH-conseq},
\begin{equation}\label{ionL2}
	\begin{split}
		& \int_{\Ge} \left(I(v^{\e},w^{\e}) v^{\e} 
		- H(v^{\e},w^{\e})w^{\e} \right)\,dS 
		\\ & \qquad\quad
		\geq \gamma \norm{v^{\e}}_{L^4(\Ge)}^4
		- \beta\left( \norm{v^{\e}}_{L^2(\Ge)}^2
		+\norm{w^{\e}}_{L^2(\Ge)}^2 \right),
	\end{split}
\end{equation}
for some $\e$-independent constants $\gamma>0$ and $\beta\ge 0$.

Using Cauchy's inequality (``with $\delta$") , the 
source term can be bounded as 
\begin{equation}\label{source}
	\begin{split}
		\abs{\sum_{j=i,e}\int_{\Oj} s_{j}^{\e} u_j^{\e} \, dx} 
		& \leq \sum_{j=i,e} 
		\left(\norm{u_j^{\e}}_{L^2(\Oj)} \norm{s_j^{\e}}_{L^2(\Oj)}\right) 
		\\ & 
		\leq \delta \sum_{j=i,e} \norm{u_j^{\e}}_{L^2(\Oj)}^2 
		+ C_{\delta} \sum_{j=i,e}\norm{s_j^{\e}}^2_{L^2(\Oj)},
	\end{split}
\end{equation}
with $\delta>0$ small and $C_\delta$ independent of $\e$. 
Lemma \ref{L2-uie-est} and \eqref{poincare-ue} ensure that
\begin{equation}\label{source-part}
	\sum_{j=i,e} \norm{u_j^{\e}}_{L^2(\Oj)}^2
	\leq C_1\left(\e  \norm{v^{\e}}_{L^2(\Ge)}^2 
	+\sum_{j=i,e} \int_{\Oj} 
	\abs{\nabla u_j^{\e}}^2\, dx\right).
\end{equation}

Insert \eqref{ionL2}, \eqref{source}, \eqref{source-part} 
into \eqref{vL2}, use \eqref{eq:matrix2}, and choose $\delta$ 
(appropriately) small. Integrating the result over the time 
intervall $(0,\tau)$, for $\tau\in (0,T)$, yields
\begin{align*}
	& \e \left( \norm{v_{\e}(\tau)}^2_{L^2(\Ge)} 
	+ \norm{w_{\e}(\tau)}^2_{L^2(\Ge)}\right)
	+ \frac{\alpha}{2} \sum_{j=i,e}\int_0^\tau 
	\norm{\nabla u_j^{\e}}^2_{L^2(\Oj)} \,dt 
	\\ & \quad \quad 
	+ \gamma \e \int_0^\tau \norm{v^{\e}(t)}^4_{L^4(\Omega)} \,dt 
	\leq \e \left( \norm{v_{\e}(0)}^2_{L^2(\Ge)} 
	+ \norm{w_{\e}(0)}^2_{L^2(\Ge)}\right)	
	\\ &  \quad\quad\quad
	+ C_2 \e \int_0^\tau \left( \norm{v_{\e}(t)}^2_{L^2(\Ge)} 
	+ \norm{w_{\e}(t)}^2_{L^2(\Ge)}\right) \, dt  
	+ C_\delta \sum_{j=i,e}\norm{s_j^{\e}}^2_{L^2((0,\tau)\times \Oj)},
\end{align*}
for some positive constant $C_2$ independent of $\e$. 
Applying Gr\"{o}wall's inequality, recalling 
\eqref{assumptionInitial} and \eqref{assumptionStimulation}, we obtain 
estimates (a), (c), (d), (e) in \eqref{eq:main-est}. 
Estimate (b) follows from \eqref{source-part} and (a), (c). 
Finally, note that \eqref{GFHN} implies the bound
$$
\abs{\dt w^\e}^2 \le C 
\left(1+\abs{v^\e}^4 + \abs{v^\e}^2\right).
$$
Estimate (f) follows by integrating this over 
$(0,T)\times \Ge$ and using (d), (e).

\end{proof}

\begin{remark}\label{rem:motivate-regularity}
Let us motivate the regularity requirements in 
Definition \ref{weak} that are not covered 
by Lemma \ref{lemma:energy}. 
First, due to \eqref{ve-def}, \eqref{trace1} 
and \eqref{eq:main-est},
$$
\norm{v^{\e}}_{L^2(0,T;H^{1/2}(\Ge))}
\leq \tilde C_{\e} 
\sum_{j=i,e}\norm{u_j^{\e}}_{L^2(0,T;H^1(\Oj))}
\le C_\e,
$$
where the constants $\tilde C_\e, C_{\e}$ may depend on $\e$ (via 
\eqref{trace1} with $\G$ replaced by $\Ge$). 

Next, we claim that 
$$
\e\norm{\dt v^\e}_{L^2(0,T;H^{-1/2}(\Ge))+L^4((0,T)\times \Ge)}\le C_\e,
$$
for some constant that may depend on $\e$. To see this use a version 
of \eqref{weak_i} or \eqref{weak_e} 
(with time-independent test functions) to write 
$$
\e \left\langle \dt v^\e , \psi\right\rangle_{H^{-1/2}(\Ge),H^{1/2}(\Ge)}
=J_1(\psi) +J_2(\psi)+J_3(\psi),
$$
for a.e.~$t\in (0,T)$ and for any $\psi\in H^{1/2}(\Ge)$ with 
$\norm{\psi}_{H^{1/2}(\Ge)}=1$, where
\begin{align*}
	&\abs{J_1(\psi)}=\abs{\int_{\Oj}\sigma_j^{\e} \nabla u_j^{\e} 
	\cdot \nabla \mathcal{I}_j(\psi)\, dx}, 
	\qquad \abs{J_2(\psi)} = 
	\abs{\int_{\Oj}s^{\e}_j\, \mathcal{I}_j(\psi) \, dx},
	\\ & \abs{J_3(\psi)}
	=\abs{\e \int_{\Ge} I(v^{\e},w^{\e})\psi \,dS},
\end{align*}
and $\mathcal{I}_j(\cdot)$ is the right inverse of the 
trace operator relative to $\Oj$, for $j=i$ or $j=e$. 
Clearly, using the Cauchy-Schwarz inequality 
and \eqref{inverseTrace},
\begin{align*}
	& \abs{J_1(\psi)}\le \tilde C_1 
	\norm{\nabla u_j^{\e}}_{L^2(\Oj)}
	\norm{\psi}_{H^{1/2}(\Ge)},
	\quad
	\abs{J_2(\psi)}\le \tilde C_2 
	\norm{s_j^{\e}}_{L^2(\Oj)}
	\norm{\psi}_{H^{1/2}(\Ge)},
	\\ & \text{and} \quad 
	\abs{J_3(\psi)}\le 
	\e \norm{I(v^{\e},w^{\e})}_{H^{-1/2}(\Ge)}
	\norm{\psi}_{H^{1/2}(\Ge)},
\end{align*}
where the constants $\tilde C_1, \tilde C_2$ may depend on 
$\e$ (via the inverse trace inequality \eqref{inverseTrace} with $\G$ 
replaced by $\Ge$). As a result, for a.e.~$t\in (0,T)$,
$$
\norm{J_1}_{H^{-1/2}(\Ge)}^2
\le \tilde C_1^2\norm{\nabla u_j^{\e}}_{L^2(\Oj)}^2,
\quad
\norm{J_2}_{H^{-1/2}(\Ge)}^2
\le \tilde C_2^2\norm{s_j^{\e}}_{L^2(\Oj)}^2.
$$
Integrating this over $t\in (0,T)$ and using 
\eqref{eq:main-est}-(a), \eqref{assumptionStimulation} 
it follows that
$$
\norm{J_1}_{L^2(0,T;H^{-1/2}(\Ge))}\le C_1,
\qquad 
\norm{J_2}_{L^2(0,T;H^{-1/2}(\Ge))}\le C_2,
$$
for some constants $C_1, C_2$ (that may depend on $\e$).
 
As in Remark \ref{weak:welldef}, 
$$
\norm{I(v^{\e},w^{\e})}_{H^{-1/2}(\Ge)}
\le C_I \left(1+\norm{v^{\e}}_{L^4(\Ge)}^4
+\norm{w^{\e}}_{L^2(\Ge)}^2\right)^{3/4},
$$
and hence, for a.e.~$t\in (0,T)$,
$$
\norm{J_3}_{H^{-1/2}(\Ge)}^{4/3}
\le \tilde C_3 \e^{4/3}\left(1+\norm{v^{\e}}_{L^4(\Ge)}^4
+\norm{w^{\e}}_{L^2(\Ge)}^2\right).
$$
Integrating over $t\in (0,T)$ and using 
estimates \eqref{eq:main-est}-(d,e), we conclude
$$
\norm{J_3}_{L^{4/3}(0,T;H^{-1/2}(\Ge))}\le C_3,
$$
for some constant $C_3$ (independent of $\e$).

\end{remark}

For the upcoming convergence analysis, we need  
a temporal translation estimate for the membrane potential $v^\e$.

\begin{lemma}[temporal translation estimate in $L^2$]\label{lemma:temp-shift}
Referring back to Theorem \ref{thm:micro}, let  
$(u_i^{\e},u_e^{\e},v^{\e},w^{\e})$ be the weak solution of 
\eqref{micro}, \eqref{eq:ib-cond}. Then $v^\e$ 
satisfies the following $L^2$ temporal 
translation estimate for some $\e$-independent 
constant $C>0$, for any sufficiently small $\Delta_t>0$:
\begin{equation}\label{eq:ve-time-trans}
	\e \int_0^{T-\Delta_t}\int_{\Ge} 
	\abs{v^\e(t+\Delta_t,x)-v^\e(t,x)}^2 \,dx \,dt 
	\le C \Delta_t.
\end{equation}
\end{lemma}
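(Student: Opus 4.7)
The idea is to test the two weak formulations \eqref{weak_i}, \eqref{weak_e} on the subinterval $(t,t+\Delta_t)$ with the time-independent test functions
\[
\p_i(x):=u_i^\e(t+\Delta_t,x)-u_i^\e(t,x),\qquad
\p_e(x):=u_e^\e(t+\Delta_t,x)-u_e^\e(t,x),
\]
which belong to $H^1(\Oi)$, $H^1(\Oe)$, respectively, for a.e.~$t\in(0,T-\Delta_t)$ by \eqref{eq:main-est}. Setting $F^\e(t,\cdot):=v^\e(t+\Delta_t,\cdot)-v^\e(t,\cdot)$, the algebraic relation \eqref{ve-def} gives $\p_i|_{\Ge}-\p_e|_{\Ge}=F^\e(t,\cdot)$ on $\Ge$. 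The integration-by-parts identity \eqref{eq:int-by-parts-general}, applied with second argument time-independent, transforms the capacitive left-hand sides of \eqref{weak_i}, \eqref{weak_e} into $\e(F^\e(t),\p_i)_{L^2(\Ge)}$ and $\e(F^\e(t),\p_e)_{L^2(\Ge)}$, respectively. Subtracting the two identities, the left-hand side collapses to $\e\|F^\e(t)\|_{L^2(\Ge)}^2$, while the two ionic contributions combine into the single term $-\e\int_t^{t+\Delta_t}\!\int_{\Ge}I(v^\e,w^\e)\,F^\e(t)\,dSds$. After integration in $t$ over $(0,T-\Delta_t)$, the proof reduces to bounding each of the five right-hand terms (two diffusion, two source, one ionic) by $C\Delta_t$.

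For a typical diffusion term, say $\int_0^{T-\Delta_t}\!\int_t^{t+\Delta_t}\!\int_{\Oi}\sigma_i^\e\nabla u_i^\e\cdot\nabla\p_i\,dxdsdt$, one combines the $L^\infty$ bound on $\sigma_i^\e$, Cauchy--Schwarz successively in $x$, $s$, and $t$, the triangle inequality $\|\nabla\p_i(t,\cdot)\|_{L^2(\Oi)}\le\|\nabla u_i^\e(t+\Delta_t)\|_{L^2(\Oi)}+\|\nabla u_i^\e(t)\|_{L^2(\Oi)}$, and the Fubini-type estimate $\int_0^{T-\Delta_t}\!\int_t^{t+\Delta_t}\|\nabla u_i^\e(s)\|^2_{L^2(\Oi)}\,dsdt\le\Delta_t\|\nabla u_i^\e\|^2_{L^2((0,T)\times\Oi)}$. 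Together with \eqref{eq:main-est}(a) this yields a bound of the form $C\Delta_t^{1/2}\cdot(\Delta_t C)^{1/2}\cdot C^{1/2}=C\Delta_t$. The two source terms are handled in the same way, substituting \eqref{eq:main-est}(b) and \eqref{assumptionStimulation} for the roles of $\nabla u_j^\e$ and $\p_j$.

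The principal obstacle is the ionic term
\[
\e\int_0^{T-\Delta_t}\int_t^{t+\Delta_t}\!\int_{\Ge}I(v^\e,w^\e)(s,x)\,F^\e(t,x)\,dSdsdt,
\]
where crucially $F^\e(t,\cdot)$ is independent of the inner variable $s$. Applying H\"older with exponents $(4/3,4)$ on $\Ge$, Jensen's inequality in $s$ (contributing a factor $\Delta_t^{1/3}$), and Fubini in $(s,t)$ (contributing another factor $\Delta_t$), the task reduces to controlling $\|I(v^\e,w^\e)\|^{4/3}_{L^{4/3}((0,T)\times\Ge)}$ and $\|F^\e\|^4_{L^4((0,T-\Delta_t)\times\Ge)}$. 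The growth estimate \eqref{I43}, together with $|\Ge|\sim\e^{-1}$ and parts (d), (e) of \eqref{eq:main-est}, gives the former $\le C/\e$; estimate (d) alone gives the latter $\le C/\e$. Assembling these,
\[
\e\int_0^{T-\Delta_t}\left|\int_t^{t+\Delta_t}\!\int_{\Ge}I(v^\e,w^\e)F^\e\,dSds\right|dt\le \e\cdot(C\Delta_t^{4/3}\e^{-1})^{3/4}(C\e^{-1})^{1/4}=C\Delta_t,
\]
matching the order of the other contributions. Combining all five bounds yields \eqref{eq:ve-time-trans}.
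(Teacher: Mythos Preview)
Your proof is correct. The paper takes a closely related but differently organized route: instead of testing \eqref{weak_i}--\eqref{weak_e} on the short interval $(t,t+\Delta_t)$ with the time-independent differences $\p_j=u_j^\e(t+\Delta_t)-u_j^\e(t)$, it subtracts the \emph{translated} weak equations from the original ones on the full interval $(0,T-\Delta_t)$ and tests with the time-averaged functions $\p_j(t)=-\int_t^{t+\Delta_t}u_j^\e(s)\,ds$ (and similarly $\p_w$). Since $\partial_t\p_j=-(u_j^\e(t+\Delta_t)-u_j^\e(t))$, the capacitive term again collapses to $\e\|F^\e\|_{L^2(\Ge)}^2$; the remaining terms are then estimated via Young's convolution inequality~\eqref{convolution} in place of your Fubini/Jensen combination, yielding the same $C\Delta_t$ bound.

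The two arrangements are essentially dual---you put the ``difference'' in the test function and the ``averaging'' in the time interval, while the paper does the reverse---and the term-by-term estimates match up. Your organization has two minor advantages: you avoid the endpoint terms $J_1,J_2$ at $t=0$ and $t=T-\Delta_t$ that the paper must handle separately (arising from the integration by parts in time with the $t$-dependent $\p_j$), and you do not need to involve the $w^\e$-equation \eqref{weak_w} at all, whereas the paper also carries along a $\p_w$ term $J_5$ that is bounded separately but plays no structural role. On the other hand, the paper's choice of $\p_j$ is the classical ``Steklov-averaged'' test function, which some readers may find more familiar.
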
 

\begin{proof} 
The translated functions $(u_i^{\e},u_e^{\e},v^{\e},w^{\e})(t+\Delta_t)$
constitute a weak solution of \eqref{micro} on $(0,T-\Delta_t)$ with initial 
data $(v^\e,w^\e)(\Delta_t)$ and stimulation 
currents $(s^\e_i,s^\e_e)(t+\Delta_t)$. 
We subtract the equations (linked to $\Oi$ and $\Oe$) 
for the original weak solution from the equations satisfied by 
the translated one, and add the resulting equations. The result is
\vspace{-4.0em}
\begin{equation*}\label{eq-translation_1}
	\begin{split}
		&-\e \int_0^{T-\Delta_t} \int_{\Ge} 
		\left(v^{\e}(t+\Delta_t)-v^{\e}(t) \right)
		\dt (\p_i-\p_e) \, dS\, dt
		\\ & \quad \quad
		+\e \int_{\Ge} \left(v^{\e}(T)-v^{\e}(T-\Delta_t) \right) (\p_i(T-\Delta_t)-\p_e(T-\Delta_t)) \,dS		
		\\ & \quad \quad
		-\e \int_{\Ge} \left(v^{\e}(\Delta_t)-v^{\e}_0 \right) (\p_i(0)-\p_e(0)) \,dS
		\\ & \quad\quad 
		+ \sum_{j=i,e} \int_0^{T-\Delta_t} \int_{\Oj} 
		\sigma_j^{\e} \nabla \left(u_j^{\e}(t+\Delta_t)-u_j^{\e}(t)\right) 
		\cdot \nabla \p_j \, dx\, dt
		\\ & \quad\quad  
		+\e\int_0^{T-\Delta_t} \int_{\Ge} 
		\left( I(v^{\e}(t+\Delta_t),w^{\e}(t+\Delta_t)
		- I(v^{\e}(t),w^{\e}(t)\right) (\p_i-\p_e) \,dS\, dt
		\\ & \quad\quad
		-\e \int_0^{T-\Delta_t}\int_{\Ge} 
		\left(H(v^{\e}(t+\Delta_t),w^{\e}(t+\Delta_t))
		-H(v^{\e}(t),w^{\e}(t)\right) \p_w \,dS\, dt 
		\\ & \quad \quad\qquad			
		=\sum_{j=i,e}\int_0^{T-\Delta_t} \int_{\Oj} 
		\left(s_j^{\e}(t+\Delta_t)-s^{\e}(t) \right) \p_j \, dx\, dt.
	\end{split}
\end{equation*}
Specifying the test functions $\p_i, \p_e,\p_w$ as
$$
\p_j(t)= -\int_t^{t+\Delta_t} u_j^\e(s) \,ds,
\quad j=i,e,
\qquad
\p_w(t)= -\int_t^{t+\Delta_t} w_j^\e(s) \,ds,
$$
we obtain			
\begin{equation*}
	\begin{split}
		& \e \int_0^{T-\Delta_t} 
		\int_{\Ge} \abs{v^{\e}(t+\Delta_t)-v^{\e}(t)}^2\, dS \, dt 
		\\ & \quad\quad 
		+ \e \int_{\Ge} \left(v^{\e}(T)-v^{\e}(T-\Delta_t) \right)  
		\left(-\int_{T-\Delta_t}^T v(s)\,ds \right)\, dS 
		\\ & \quad\quad 
		- \e \int_{\Ge} \left(v^{\e}(\Delta_t)-v^{\e}_0 \right)  
		\left(-\int_0^{\Delta_t} v(s)\,ds \right)\, dS 
		\\ & \quad \quad
		+\sum_{j=i,e}\int_0^{T-\Delta_t} \int_{\Oj}
		\sigma_j^{\e} \nabla \left(u_j^{\e}(t+\Delta_t)-u_j^{\e}(t)\right) 
		\\& \quad \qquad \qquad\qquad\qquad\qquad\quad
		\cdot \nabla \left(-\int_{t}^{t+\Delta_t}u_j^{\e}(s)\,ds\right) \, dx\, dt
		\\ & \quad\quad  
		+ \e \int_0^{T-\Delta_t}\int_{\Ge}
		\left( I(v^{\e}(t+\Delta_t),w^{\e}(t+\Delta_t))
		- I(v^{\e}(t),w^{\e}(t)) \right)
		\\ & \quad \qquad \qquad\qquad\qquad\qquad\quad
		 \times \left(-\int_{t}^{t+\Delta_t}v^{\e}(s)\,ds\right) \,dS \, dt
		\\ & \quad\quad
		+ \e \int_0^{T-\Delta_t}\int_{\Ge}
		\left( H(v^{\e}(t+\Delta_t),w^{\e}(t+\Delta_t))
		- H(v^{\e}(t),w^{\e}(t))\right)
		\\ & \quad \qquad \qquad\qquad\qquad\qquad\quad
		\times \left(\int_{t}^{t+\Delta_t}w^{\e}(s)\,ds\right) \,dS \, dt 
		\\ & \qquad \quad \qquad
		= \sum_{j=i,e}\int_0^{T-\Delta_t} \int_{\Oj} 
		s_j^\e \left(-\int_{t}^{t+\Delta_t}u_j^{\e}(s)\,ds\right) \, dx \, dt.
	\end{split}
\end{equation*} 	
Let us write this equation as 
$$
\e \int_0^{T-\Delta_t} \int_{\Ge}
\abs{v^{\e}(t+\Delta_t)-v^{\e}(t)}^2\, dS\, dt
+J_1+J_2+J_3+J_4+J_5=J_6. 
$$

By the Cauchy-Schwarz and Minkowski integral inequalities,
\begin{equation*}
	\begin{split}
		&\abs{J_1}
		\leq \e\norm{v^{\e}(T)-v^{\e}(T-\Delta_t)}_{L^2(\Ge)}
		\norm{\int_{T-\Delta_t}^T v^{\e}(s)\,ds}_{L^2(\Ge)}
		\\ & \qquad
		\leq 2 \e \norm{v^{\e}}_{L^\infty(0,T;L^2(\Ge))} 
		\int_{T-\Delta_t}^T\norm{v^{\e}}_{L^2(\Ge)}\,ds	
		\\ & \qquad
		\leq 2 \e \norm{v^{\e}}_{L^{\infty}(0,T;L^2(\Ge))}^2\Delta_t
		\overset{\eqref{eq:main-est}-(c)}{\le} C_1 \Delta_t.
	\end{split}	 	
\end{equation*}
Similarly, $\abs{J_2}\leq C_2\Delta_t$.

We need the following facts 
involving a real-valued function $f\in L^p$ ($p>1$):
\begin{equation} \label{convolution}
	\begin{split}
		& \int_t^{t+\Delta_t} f(s)\,ds 
		= \left(\mathbbm{1}_{[0,\Delta_t]} \star f\right)(t), 	
		\\ & 
		\norm{\mathbbm{1}_{[0,\Delta_t]} \star f}_{L^p} 
		\leq \norm{\mathbbm{1}_{[0,\Delta_t]}}_{L^1}
		\norm{f}_{L^p}=\Delta_t \norm{f}_{L^p},
	\end{split}
\end{equation}
where the second line follows from Young's convolution inequality. 

By \eqref{eq:matrix1} and the Cauchy-Schwarz inequality, 
\begin{align*}	
	& \abs{J_3} \le \sum_{j=i,e}
	2 \norm{\sigma_j^\e}_{L^\infty}
	\norm{\nabla u_j^{\e}}_{L^2((0,T)\times \Oj)} 
	\norm{\int_{t}^{t+\Delta_t} 
	\nabla u_j^{\e}(s)\,ds}_{L^2((0,T-\Delta_t)\times \Oj)}.
\end{align*}
As a result of Minkowski integral inequality,
\begin{align*}
	& \norm{\int_{t}^{t+\Delta_t} \nabla u_j^{\e}(s)
	\,ds}_{L^2((0,T-\Delta_t)\times \Oj)}^2
	\\ & \qquad = \int_0^{T-\Delta_t}\int_{\Oj} 
	\abs{\int_t^{t+\Delta_t}\nabla u_j^\e(s)
	\, ds}^2\, dx \,dt
	\\ & \qquad 
	\le \int_0^{T-\Delta_t} \left(\int_t^{t+\Delta_t} 
	\left(\int_{\Oj} \abs{\nabla u_j^\e(s)}^2 
	\,dx\right)^{1/2} \, ds\right)^2 \,dt
	\\ & \qquad 
	=  \int_0^{T-\Delta_t}\left(\int_t^{t+\Delta_t} 
	\norm{\nabla u_j^\e(s)}_{L^2(\Oj)} \, ds\right)^2 \,dt 
	\\ & \qquad
	\overset{\eqref{convolution}}{\le}
	\Delta_t^2 \int_0^{T-\Delta_t} 
	\norm{\nabla u_j^\e}_{L^2(\Oj)}^2 \, dt
	\le \Delta_t^2 \norm{\nabla u_j^\e}_{L^2((0,T)\times \Oj)}^2.
\end{align*}
Therefore,
$$
\abs{J_3}\le \sum_{j=i,e}
2 \norm{\sigma_j^\e}_{L^\infty}
\norm{\nabla u_j^\e(t)}_{L^2((0,T)\times \Oj)}^2\Delta_t
\overset{\eqref{eq:main-est}-(a)}{\le} 
C_3\Delta_t.
$$

An application of H\"{o}lder's inequality yields 
\begin{equation*}
	\begin{split}
		\abs{J_4} &
		\leq 2 \e \norm{I^{\e}}_{L^{4/3}((0,T)\times \Ge)}
		\norm{\int_{t}^{t+\Delta_t}v^{\e}(s) \,ds}_{L^4((0,T-\Delta_t)\times \Ge)}
		\\ & 
		\overset{\eqref{I43}}{\le} C_4 
		\e \left(\norm{v^{\e}}^4_{L^4((0,T)\times \Ge)} 
		+ \norm{w^{\e}}_{L^2((0,T)\times \Ge)}^2\right)^{3/4} 
		\Delta_t \norm{v^{\e}}_{L^4((0,T)\times \Ge)}
		\\&
		\le C_5 \Delta_t \e \left( \norm{v^{\e}}_{L^4((0,T)\times \Ge)}^4 
		+ \norm{w^{\e}}_{L^2((0,T)\times \Ge)}^2 \right)
		\overset{\eqref{eq:main-est}-(d,e)}{\le} C_6 \Delta_t,
	\end{split}	
\end{equation*}
where we have repeated the argument for $J_1$ 
involving \eqref{convolution}, with the implication
that $\norm{\int_{t}^{t+\Delta_t}v^{\e}(s)\,ds}_{L^4_{t,x}}$ 
is bounded in terms of $\Delta_t \norm{v^{\e}}_{L^4_{t,x}}$. 
Moreover, we have used the basic inequality 
$ab\le \frac34 a^{4/3} + \frac14 b^4$ for positive numbers $a,b$.

Similarly,
\begin{equation*}
	\begin{split}
		\abs{J_5} 
		& \overset{\eqref{GFHN}}{\le} 
		C_7\Delta_t \e \left( \norm{v^{\e}}_{L^4((0,T)\times \Ge)}^4
		+\norm{w^{\e}}_{L^2((0,T)\times \Ge)}^2\right)^{1/2}
		\norm{v^\e}_{L^2((0,T)\times \Ge)}
		\\ & \overset{\eqref{eq:main-est}-(d,e)}{\le} C_8 \Delta_t,
	\end{split}
\end{equation*}
and
\begin{equation*}
	\abs{J_6}\leq 2 \Delta_t 
	\sum_{j=i,e}\norm{s_j^{\e}}_{L^2((0,T)\times \Oj)}
	\norm{u_j^{\e}}_{L^2((0,T)\times \Oj)}
	\overset{\eqref{assumptionStimulation}, \eqref{eq:main-est}-(b)}{\le}
	C_9 \Delta_t.
\end{equation*}

Summarizing our findings, we conclude that \eqref{eq:ve-time-trans} holds.
\end{proof}

\begin{remark}
Due to the degenerate structure of the 
microscopic system \eqref{micro}, temporal 
estimates are not available for 
the intra- and extracellular potentials $u_i^\e, u_e^\e$. 
\end{remark}

\section{The homogenization result} \label{sec:convergence}
This section contains the main result of the paper. 
We start by recalling the weak formulation of the 
macroscopic bidomain system \eqref{macro}, which 
is augmented with the following initial and boundary conditions:
\begin{equation}\label{eq:ib-cond2}
	\begin{split}
		& v|_{t=0}=v_0 \;\;  \text{in $\O$}, \quad
		w|_{t=0}=w_0, \;\;  \text{in $\O$}.
		\\ & n \cdot\sigma  \nabla u_j  = 0  
		\;\;  \text{on $(0,T)\times \partial \O$, $j=i,e$.}
	\end{split}
\end{equation}

\begin{definition}[weak formulation of bidomain system]\label{weakMacro}
A weak solution to \eqref{macro}, \eqref{eq:ib-cond2} is a 
collection $(u_i,u_e,v,w)$ of functions 
satisfying the following conditions:
\begin{enumerate}[i]
	\item (algebraic relation). 
	$$
	v= u_i - u_e \quad \text{a.e.~in $(0,T)\times\O$}. 
	$$

	\item (regularity).
	\begin{align*}
		&u_j \in L^2(0,T;H^1(\O)), \quad j=i,e, \\
		& \int_{\O} u_e^\e(t,x)\, dx = 0, \quad t\in (0,T), \\
		&v \in L^2(0,T; H^1(\O)) \cap L^4((0,T)\times \O), \\
		& \dt v \in L^2(0,T; H^{-1}(\O)) + L^{4/3}((0,T)\times \O), \\
		& w \in H^1(0,T;L^2(\O)).
	\end{align*}

	\item (initial conditions).
 	\begin{align*}
		v(0)=v_0, \quad
		w(0) = w_0.
	\end{align*} 

	\item (differential equations).
	\begin{equation*}
		\begin{split}
			&|\G|\int_0^T \left\langle \dt v,  \p_i 
			\right\rangle  \,dt
			+ \int_0^T \int_{\O} M_i \nabla u_i \cdot \nabla \p_i\, dx \, dt
			\\ & \qquad\qquad
			+  |\G| \int_0^T \int_{\O} I(v,w)\p_i \, dx \, dt 
			= |Y_i|\int_0^T\int_{\O} s_{i} \p_i \, dx \, dt,
		\end{split}
	\end{equation*}
	
	\begin{equation*}
		\begin{split}
			& |\G| \int_0^T \left\langle \dt v,\p_e
			\right\rangle  \,dt
			- \int_0^T \int_{\O} M_e \nabla u_e\cdot \nabla \p_e \, dx \, dt
			\\ & \qquad\qquad
			+ |\G|\int_0^T \int_{\O} I(v,w)\p_e \, dx \, dt
			= -|Y_e|\int_0^T \int_{\O} s_{e} \p_e \, dx \, dt,
		\end{split}
	\end{equation*}
	
	\begin{equation}
		\int_0^T \int_{\O} \dt w \p_w \, dx \, dt 
		=  \int_0^T \int_{\O} H(v,w) \p_w \, dx \, dt,
	\end{equation}
\end{enumerate}
for all test functions $\p_i,\p_e\in L^2(0,T; H^1(\O))\cap L^4((0,T)\times \O)$, 
$\p_w\in L^2(0,T;L^2(\O))$. We denote by $\left\langle \cdot, \cdot \right\rangle$ 
the duality pairing between $L^2(0,T; H^{-1}(\O)) + L^{4/3}((0,T)\times \O)$ 
and $L^2(0,T; H^1(\O))\cap L^4((0,T)\times \O)$.
\end{definition} 

The macroscopic bidomain system is well studied for a variety of cellular 
models \cite{Andreianov:2010uq,Karlsen,Boulakia2008,Bourgault,Colli,Veneroni:2009aa}. 
For the following result, see \cite{Andreianov:2010uq,Karlsen,Boulakia2008}.

\begin{theorem}[well-posedness of bidomain system]
Suppose \eqref{def:domain} holds, $I$ and 
$H$ satisfy the conditions in \eqref{GFHN} 
and \eqref{uniq-cond1} (for uniqueness), $M_i(x)$ and $M_e(x)$ 
are bounded, positive definite matrices, 
$s_i,s_e\in L^2((0,T)\times \O)$, and $v_0,w_0 \in L^2(\O)$. 
Then there exists a unique weak solution 
to the bidomain system \eqref{macro}, \eqref{eq:ib-cond2}.
\end{theorem}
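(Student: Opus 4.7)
The plan is to establish existence by a Faedo-Galerkin approximation scheme, handling the degeneracy of the system by first reformulating \eqref{macro} in terms of the single unknown $v$ (together with the gating variable $w$), and then to obtain uniqueness from the dissipative structure built into \eqref{uniq-cond1}.

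First, I would eliminate $u_i,u_e$ from the elliptic part. Subtracting the second equation in \eqref{macro} from the first yields the purely elliptic constraint
\begin{equation*}
	-\Div(M_i\nabla u_i)-\Div(M_e\nabla u_e)=|Y_i|s_i+|Y_e|s_e,
\end{equation*}
with Neumann boundary conditions and the compatibility condition \eqref{assumptionCompatibility} on the sources. Writing $u_i=v+u_e$ and using the normalization $\int_\O u_e\,dx=0$, this becomes an elliptic equation for $u_e$ in $H^1(\O)/\R$ with right-hand side that is affine in $v$. Lax-Milgram (using the uniform ellipticity of $M_i+M_e$) provides a bounded linear solution map $v\mapsto (u_i(v),u_e(v))$ from $L^2(\O)$ to $H^1(\O)/\R\times H^1(\O)/\R$, satisfying $\norm{\nabla u_j(v)}_{L^2(\O)}\le C(\norm{v}_{L^2(\O)}+\norm{s_i}_{L^2(\O)}+\norm{s_e}_{L^2(\O)})$. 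Adding half of each equation in \eqref{macro} then recasts the whole system as an abstract degenerate parabolic problem for $(v,w)$:
\begin{equation*}
	|\G|\,\dt v+Av+|\G|\,I(v,w)=S(v),
	\qquad \dt w=H(v,w),
\end{equation*}
where $Av:=-\tfrac12\Div(M_i\nabla u_i(v))+\tfrac12\Div(M_e\nabla u_e(v))$ is an elliptic-type operator in $v$ and $S$ collects the source terms.

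Next, I would set up a Galerkin scheme on a smooth basis $\{e_k\}\subset H^1(\O)$ of eigenfunctions of $-\Delta$ with Neumann condition, producing approximate $v^N,w^N$ solving ODE systems on $[0,T]$ (local existence via Cauchy-Lipschitz, extended globally once the a priori estimates below are established). Testing the $v^N$-equation against $v^N$ and the $w^N$-equation against $w^N$, using the $M_j$-ellipticity together with the dissipation bound \eqref{IH-conseq} and Cauchy's inequality with $\delta$ on the source term, one obtains (uniformly in $N$)
\begin{equation*}
	v^N\in L^\infty(0,T;L^2(\O))\cap L^2(0,T;H^1(\O))\cap L^4((0,T)\times\O),
	\qquad w^N\in L^\infty(0,T;L^2(\O)),
\end{equation*}
together with $\nabla u_j(v^N)\in L^2$. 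The GFHN growth \eqref{GFHN} gives $I(v^N,w^N)\in L^{4/3}((0,T)\times\O)$ and $H(v^N,w^N)\in L^2((0,T)\times\O)$, whence $\dt v^N$ is bounded in $L^2(0,T;H^{-1}(\O))+L^{4/3}((0,T)\times\O)$ and $\dt w^N$ is bounded in $L^2((0,T)\times\O)$.

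The main technical obstacle is passing to the limit in the cubic nonlinearity $I(v,w)$. To resolve this, I would apply the Aubin-Lions-Simon lemma with $H^1(\O)\hookrightarrow\hookrightarrow L^2(\O)\hookrightarrow H^{-1}(\O)+L^{4/3}(\O)$ to extract a subsequence $v^N\to v$ strongly in $L^2((0,T)\times \O)$, hence a.e., and by the uniform $L^4$ bound (Vitali's theorem) strongly in $L^q((0,T)\times\O)$ for every $q<4$. For $w^N$, the uniform $H^1$-in-time bound yields strong convergence in $C(0,T;L^2(\O))$. The continuity of $I,H$ then gives $I(v^N,w^N)\to I(v,w)$ in $L^{4/3-}$, sufficient to pass to the limit against test functions in $L^2(0,T;H^1(\O))\cap L^4$. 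The linear elliptic terms pass to the limit by weak convergence, and the initial conditions are recovered from the continuous embedding analogous to \eqref{cont-embeding} on $\O$.

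Finally, for uniqueness take two solutions, subtract the weak formulations, and test the difference equations against $(\mu(u_i-\tilde u_i), -\mu(u_e-\tilde u_e), -(w-\tilde w))$ with $\mu$ from \eqref{uniq-cond1}. Integrating over $(0,t)$, using the chain rule \eqref{eq:chain-rule} on $v-\tilde v$ and the inequality
\begin{equation*}
	\mu\bigl(I(v,w)-I(\tilde v,\tilde w)\bigr)(v-\tilde v)-\bigl(H(v,w)-H(\tilde v,\tilde w)\bigr)(w-\tilde w)\ge -\lambda\bigl(|v-\tilde v|^2+|w-\tilde w|^2\bigr)
\end{equation*}
derived in Section \ref{sec:assumptions}, together with ellipticity of $M_i,M_e$, produces a Gronwall-type inequality on $\norm{v-\tilde v}_{L^2(\O)}^2+\norm{w-\tilde w}_{L^2(\O)}^2$, which yields $L^2$ stability and hence uniqueness. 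The normalization $\int u_e\,dx=0$ then fixes $u_i,u_e$ individually.
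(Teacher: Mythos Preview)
The paper does not supply its own proof of this theorem; it simply cites \cite{Andreianov:2010uq,Karlsen,Boulakia2008}. Your Faedo--Galerkin outline, built on the elliptic reformulation $v\mapsto (u_i(v),u_e(v))$, the energy identity driven by \eqref{IH-conseq}, Aubin--Lions compactness for $v^N$, and the Gronwall-based uniqueness from \eqref{uniq-cond1}, is precisely the strategy implemented in those references (see in particular \cite{Bourgault} and \cite{Karlsen}), so the approach is correct and matches the literature the paper defers to.

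One small correction: the claim that the uniform $H^1(0,T;L^2(\O))$ bound on $w^N$ yields strong convergence in $C(0,T;L^2(\O))$ is not justified, since there is no spatial compactness for $w^N$. This does not harm the argument, however, because in the \eqref{GFHN} model $w$ enters $I$ and $H$ only linearly; weak convergence $w^N\rightharpoonup w$ in $L^2((0,T)\times\O)$ combined with the strong convergence of $v^N$ (hence of $I_1(v^N)$ and $I_2(v^N)$ in $L^q$, $q<4/3$ and $q<4$ respectively) already suffices to pass to the limit in the nonlinear terms.
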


We are now in a position to state the 
main result, which should be compared to 
Theorem 1.3 in \cite{Pennacchio2005}.

\begin{theorem}[convergence to the bidomain system]\label{theorem:homo}
Suppose conditions \eqref{GFHN}, \eqref{def:domain}, \eqref{uniq-cond1}, 
\eqref{fastSlow}, \eqref{eq:matrix1}, \eqref{eq:matrix2}, 
\eqref{assumptionCompatibility}, \eqref{sj-ass}, and \eqref{assumptionInitial} hold. 
Let $\e$ take values in a sequence tending to zero (e.g.~$\e^{-1}\in \N$). 
Then the sequence $\seq{u_i^{\e},u_e^{\e},v^{\e},w^{\e}}_{\e>0}$ 
of weak solutions to the microscopic system \eqref{micro}, \eqref{eq:ib-cond} 
two-scale converges (in the sense of \eqref{convergence} below)
to the weak solution $(u_i,u_e,v,w)$ of the 
macroscopic bidomain system \eqref{macro}, \eqref{eq:ib-cond2}. 
Moreover, $\seq{v^{\e}}_{\e>0}$ converges strongly in the sense that
\begin{equation}\label{strongConvergence}
	\e^{1/2} \norm{v^{\e}-v}_{L^2\left( \Ge \right)} \to 0, 
	\quad \text{as $\e \rightarrow 0$.}
\end{equation} 
\end{theorem}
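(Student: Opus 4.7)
The plan is to combine the $\e$-uniform a priori bounds of Lemma \ref{lemma:energy} with two-scale compactness (Theorem \ref{twoL2}, Lemma \ref{twoH1perf}, Theorem \ref{twoS}) to extract convergent subsequences, upgrade the surface convergence of $v^\e$ to strong convergence via the boundary unfolding operator, and pass to the limit in the weak formulation \eqref{weak_i}--\eqref{weak_w}. From bounds \eqref{eq:main-est}(a)--(b) and Lemma \ref{twoH1perf}, one extracts $u_i,u_e \in L^2(0,T;H^1(\O))$ and correctors $u_j^{(1)} \in L^2((0,T)\times\O;H^1_{\mathrm{per}}(Y_j))$ with $\widetilde{u_j^\e} \overset{2}{\rightharpoonup} \mathbbm{1}_{Y_j}(y)u_j$ and $\widetilde{\nabla u_j^\e} \overset{2}{\rightharpoonup} \mathbbm{1}_{Y_j}(y)(\nabla_x u_j+\nabla_y u_j^{(1)})$; the normalization $\int_{\Oe} u_e^\e\,dx=0$ passes to $\int_\O u_e\,dx=0$. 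By Theorem \ref{twoS} and bounds \eqref{eq:main-est}(c)--(e), $v^\e$ and $w^\e$ admit surface two-scale limits, and Lemma \ref{relation trace} identifies the limit of $v^\e$ as $v(t,x):=u_i(t,x)-u_e(t,x)$, which is independent of $y$.

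The decisive step is to upgrade the two-scale convergence of $v^\e$ to strong convergence on the surface, needed for passing to the limit in the nonlinear ionic term $I(v^\e,w^\e)$. To this end I apply the boundary unfolding operator $\Tau_\e^b$. Inequality \eqref{unfoldingH1/2} applied to $u_i^\e$ and $u_e^\e$ yields a uniform bound on $\Tau_\e^b(v^\e)$ in $L^2(0,T;L^2(\O;H^{1/2}(\G)))$, while Lemma \ref{lemma:temp-shift} combined with the integration formula \eqref{IntegralTransform} furnishes the temporal translation estimate
\[
	\norm{\Tau_\e^b(v^\e)(\cdot+\Delta_t,\cdot,\cdot)-\Tau_\e^b(v^\e)}^2_{L^2((0,T-\Delta_t)\times\O\times\G)}\le C\Delta_t.
\]
Simon's compactness \cite[Theorem 5]{Simon:1987vn}, with $X_1=L^2(\O;H^{1/2}(\G))$ compactly embedded in $X_0=L^2(\O;L^2(\G))$, then gives strong convergence $\Tau_\e^b(v^\e)\to v$ in $L^2((0,T)\times\O\times\G)$; combined with \eqref{Tb-strongconv} and \eqref{Tb-L2Bound} this produces \eqref{strongConvergence}. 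For the gating variable, the linearity of $H$ in $w$ from \eqref{GFHN} allows solving the ODE pointwise on $\Ge$ by Duhamel's formula in terms of $h(v^\e)$; unfolding this representation transfers strong convergence from $\Tau_\e^b(v^\e)$ to $\Tau_\e^b(w^\e)$.

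With all convergences in hand, I pass to the limit in \eqref{weak_i}--\eqref{weak_w} using oscillating test functions $\p_j(t,x)=\phi(t,x)+\e\phi^{(1)}(t,x,x/\e)$ with $\phi\in C_c^\infty([0,T)\times\overline{\O})$ and $\phi^{(1)}$ smooth and $Y$-periodic in $y$. The elliptic terms converge to integrals involving $\sigma_j(\nabla_x u_j+\nabla_y u_j^{(1)})$; the choice $\phi=0$ isolates the cell problem \eqref{chi} characterizing $u_j^{(1)}$, and substituting back reduces $\int_{Y_j}\sigma_j(\nabla_x u_j+\nabla_y u_j^{(1)})\,dy$ to $M_j\nabla_x u_j$ as in \eqref{Mj}. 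The time-derivative and source terms, treated respectively via the integrated-by-parts form \eqref{weak-form-ibp} with the surface two-scale convergence of $v^\e$ and via the volume two-scale convergence of $s_j^\e$, supply the geometric factors $|\G|$ and $|Y_j|$. The nonlinear boundary term is rewritten via \eqref{IntegralTransform} and \eqref{Tb-product} as $\int_0^T\!\!\int_\O\!\!\int_\G I(\Tau_\e^b v^\e,\Tau_\e^b w^\e)\,\Tau_\e^b(\p_j)\,dS(y)\,dx\,dt$ and passed to the limit by Vitali's theorem using strong convergence of the unfolded sequences, \eqref{Tb-strongconv}, and the growth bound \eqref{I43}, yielding $|\G|\int_0^T\!\!\int_\O I(v,w)\phi\,dx\,dt$. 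Uniqueness of the macroscopic solution under \eqref{uniq-cond1} promotes subsequential convergence to convergence of the whole sequence. The principal obstacle is precisely this nonlinear oscillating boundary term, inaccessible to two-scale convergence alone; the interplay of boundary unfolding, the uniform translation estimate, and Simon compactness is what makes the argument flexible enough to cover the generalized FitzHugh-Nagumo nonlinearity.
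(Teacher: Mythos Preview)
Your overall strategy matches the paper's, but there is a genuine gap in the strong-compactness step for $\Tau_\e^b(v^\e)$. You invoke Simon's theorem with $X_1=L^2(\O;H^{1/2}(\G))$ and $X_0=L^2(\O;L^2(\G))$, claiming $X_1$ is compactly embedded in $X_0$. This embedding is \emph{not} compact: compactness of $H^{1/2}(\G)\hookrightarrow L^2(\G)$ does not lift to the Bochner space $L^2(\O;\cdot)$. (Take $f_n(x,y)=\psi_n(x)\phi(y)$ with $\psi_n$ orthonormal in $L^2(\O)$ and fixed $\phi\in H^{1/2}(\G)$; this sequence is bounded in $X_1$ but has no $X_0$-convergent subsequence.) Your bounds control regularity in $t$ (via the translation estimate) and in $y$ (via $H^{1/2}(\G)$), but give no compactness in the $x$-variable, and $\Tau_\e^b(v^\e)$ is piecewise constant in $x$, so it does not sit in any Sobolev space over $\O$.

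This is exactly the obstacle the paper isolates and overcomes. The paper applies the multi-dimensional Simon-type criterion of Gahn and Neuss-Radu (Theorem \ref{compactSurface0}) with $x\in\O$ playing the role of ``time'' and $B=L^2((0,T)\times\G)$. Condition (i) (precompactness of local $x$-averages in $B$) follows from your two ingredients, since averaging in $x$ removes that variable and one can then apply the ordinary Simon lemma in $t$ with $H^{1/2}(\G)\subset\subset L^2(\G)$ (Lemma \ref{lem:ver-i}). The substantive new work is condition (ii), an $x$-translation estimate for $\Tau_\e^b(v^\e)$ (Lemma \ref{lem:ver-ii}); this is obtained by decomposing the $\e$-cells, reducing to a shift of $v^\e$ on $\Ge$ by an integer multiple of $\e$, applying the trace inequality \eqref{trace}, and controlling the resulting volume translations via the interpolation operator $Q_\e^j$ of \eqref{Qdefinition}--\eqref{Qestimates}. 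Without this spatial-translation argument, strong $L^2((0,T)\times\O\times\G)$ compactness of $\Tau_\e^b(v^\e)$ is not established, and the passage to the limit in the nonlinear term is unjustified. (Your Duhamel argument for $w^\e$ is unnecessary---linearity in $w$ means weak convergence of $\Tau_\e^b(w^\e)$ suffices---but it is not incorrect.)
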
  

\begin{remark}
The strong convergence \eqref{strongConvergence} is a 
corrector-type result. In the current setting it allows us to pass 
to the limit in the nonlinear ionic terms. By employing standard 
techniques \cite{Donato} one can also show that the energies 
$$
\seq{\int_0^T\int_{\Oj} \sigma_j^{\e} \nabla u_i^{\e}
\cdot  \nabla u_j^{\e} \, dx \, dt}_{\e>0}
$$
converge to the homogenized energy  
$$
\int_0^T\int_{\O} M_j \nabla u_i
\cdot  \nabla u_j \, dx \, dt,
$$
where $M_j$ is defined in \eqref{Mj}, $j=i,e$.
\end{remark}

The rest of this section is devoted to the proof 
of Theorem \ref{theorem:homo}. Homogenization of
the linear terms in \eqref{micro} is handled with 
standard techniques, cf.~Subsection \ref{subsec:two-scale}.

Passing to the limit in the nonlinear terms $I,H$ 
is more challenging. Although the intracellular/extracellular functions 
$u_i^{\e}$ (defined on $\O_i^\e$) and $u_e^{\e}$ (defined on $\O_e^\e$) 
do not converge strongly, some kind of strong compactness 
is expected for the $\e$-scaled version of the transmembrane potential 
$v^{\e}= u_i^{\e}\big|_{\Ge}-u_e^{\e}\big|_{\Ge}$ (defined on $\Ge$), since 
we control both the temporal \eqref{eq:ve-time-trans} and 
spatial (fractional) derivatives \eqref{eq:main-est}.

Wild oscillations of the underlying domain 
do however pose difficulties. For this reason, we use the 
boundary unfolding operator $\Tau_{\e}^b$, cf.~\eqref{unfolding}, 
to transform the problem of convergence on the oscillating set 
$\Ge$ to the fixed set $\O \times \G$.  Roughly speaking, \eqref{eq:main-est} 
is used to conclude that $\Tau_{\e}^b(v^\e)$ is uniformly 
bounded in $L^2_tL^2_xH^{1/2}_y$. In addition, in view 
of \eqref{eq:ve-time-trans}, $\Tau_{\e}^b(v^\e)$ possesses an $\e$-uniform 
temporal translation estimate with respect to the $L^2_{t,x,y}$ norm.
As a result, the Simon compactness result (cf.~Subsection  \ref{sec:notation}) 
implies that $\seq{(t,y)\mapsto \Tau_{\e}^b(v^\e)(t,x,y)}_{\e>0}$ is 
precompact in $L^2((0,T)\times \G)$, for fixed $x$. 
Next we demonstrate that $\Tau_{\e}^b(v^\e)$ is equicontinuous in $x$ (with values
in $L^2((0,T)\times \G)$). Applying the Simon-type compactness criterion 
found in \cite{Neuss} (cf.~Theorem \ref{compactSurface0} below), it follows that 
$\seq{\Tau_{\e}^b(v^{\e})}_{\e>0}$ converges along a subsequence. 
Owing to the uniqueness of solutions to the bidomain system \eqref{weakMacro}, 
the entire sequence $\seq{\Tau_{\e}^b(v^{\e})}_{\e>0}$ 
converges (not just a subsequence). We refer to Subsection \ref{subsec:strongconv} 
for details. For inspirational works deriving macroscopic 
models by combining two-scale and unfolding techniques, we refer to 
\cite{Neuss,Gahn:2016aa,Graf:2014aa,Marciniak-Czochra:2008aa,Neuss-Radu:2007aa}.

\subsection{Extracting two-scale limits}\label{subsec:two-scale}
Recall that $\tilde{\cdot}$ denotes the extension to $\Omega$ by 
zero, and that $\mathbbm{1}_{Y_j}$ is the indicator function of $Y_j$ ($j=i,e$). 
Using the a priori estimates provided by Lemma \ref{lemma:energy}, we 
can apply Lemma \ref{twoH1perf}, Theorem \ref{twoS}, 
and Lemma \ref{relation trace} to extract 
subsequences (not relabelled) such that
\begin{equation}\label{convergence}
	\begin{split}
		&\widetilde{u_j^{\e}} 
		\overset{2}{\rightharpoonup} 
		\mathbbm{1}_{Y_j}(y)u_j(t,x), \quad j=i,e
		\\ &
		\widetilde{\nabla u_j^{\e}} \overset{2}{\rightharpoonup}
		\mathbbm{1}_{Y_j}(y)\left( \nabla_x u_j(t,x) + \nabla_y u_j^1(t,x,y) \right),
		\quad j=i,e,
		\\ &
		v^{\e}\overset{2-\mathrm{S}}{\rightharpoonup}  v = u_i-u_e, 
		\\ &
		w^{\e}\overset{2-\mathrm{S}}{\rightharpoonup} w, 
		\quad 
		\dt w^{\e} \overset{2-\mathrm{S}}{\rightharpoonup} \dt w, 
		\\ & 
		I(v^{\e},w^{\e}) \overset{2-\mathrm{S}}{\rightharpoonup}  \overline{I}, 
		\quad
		H(v^{\e},w^{\e}) \overset{2-\mathrm{S}}{\rightharpoonup} \overline{H},
	\end{split}
\end{equation}
for some limits $u_i,u_e \in L^2(0,T,H^1(\O))$, 
$u_i^1,u_e^1 \in L^2((0,T)\times \O ;H_{\mathrm{per}}^1(Y))$, and 
$w \in L^2((0,T)\times\O \times \G )$. Here we identify 
$v=u_i-u_e$ as an element in $L^2(0,T,H^1(\O))$. It is easily 
verified that the two-scale limit 
$u_e$ satisfies $\int_{\O} u_e \, dx=0$. 

Nonlinear functions are not continuous with respect to weak 
convergence, which prevents us from immediately 
making the identifications 
$$
\overline{I} = I(v,w), \qquad 
\overline{H} = H(v,w).
$$  

Using the two-scale convergences in \eqref{convergence}, 
Remark \ref{rem:well-defined}, the choice of test function 
\begin{align*}
	& \p(t,x)+\e\p_1\left(t,x,\frac{x}{\e}\right), 
	\quad \text{with}
	\\
	& \p \in C^{\infty}_0((0,T)\times \O),
	\; 
	\p_1 \in C^\infty_0((0,T)\times\O;C^\infty_{\mathrm{per}}(Y)),
\end{align*}
in \eqref{weak_i}, \eqref{weak_e}, and \eqref{weak_w}, standard 
manipulations \cite{Donato,Allaire2} will reveal 
that the two-scale limit $(u_i,u_e,v,w)$ satisfies 
the following equations:
\begin{equation}\label{u_iLim}
	\begin{split}
		&|\Gamma|\int_0^T \int_{\O} 
		\left\langle \dt v, \p \right\rangle \, dx \, dt 
		+\lim_{\e\rightarrow 0}\e \int_0^T 
		\int_{\G^{\e}} I(v^{\e},w^{\e}) \p \,dS \,dt
		\\ & \qquad\qquad\qquad
		+\int_0^T\int_{\O} M_i \nabla u_i\cdot  \nabla \p \, dx \, dt 
		=|Y_i|\int_0^T\int_{\O} s_i \p \, dx \, dt,
	\end{split}
\end{equation}
\begin{equation}\label{u_eLim}
	\begin{split}
		& |\Gamma|\int_0^T \int_{\O} 
		\left\langle \dt v, \p \right\rangle \, dx \, dt 
		+\lim_{\e\rightarrow 0} \e \int_0^T \int_{\G^{\e}} I(v^{\e},w^{\e}) 
		\p \,dS \,dt
		\\ & \qquad\qquad\qquad 
		+\int_0^T\int_{\O} M_e \nabla u_e\cdot  \nabla \p \, dx \, dt 
		= |Y_e|\int_0^T\int_{\O} s_e \p\, dx \, dt,
	\end{split}
\end{equation}  
and 
\begin{equation}\label{wLim}
	|\G|\int_0^T \int_{\O} \dt w \p \, dx \, dt 
	=  \lim_{\e\rightarrow 0} \e \int_0^T 
	\int_{\G^{\e}} H(v^{\e},w^{\e}) \p \,dS \,dt,
\end{equation}
for all $\p \in C^{\infty}_0((0,T)\times \O)$. 

In \eqref{u_iLim} and \eqref{u_eLim}, $M_i$ 
and $M_e$ are the homogenized conductivity tensors $\eqref{Mj}$. 
Let us briefly recall how one arrives at the 
homogenized conductivities. Setting $\p\equiv 0$ and considering 
$$
\Phi(t,x,y) := \sigma_j(x,y)\nabla_y \p_1(t,x,y)
$$ 
as a test function for two-scale convergence, we have 
by \eqref{convergence} that
\begin{align*}
	&\lim_{\e\to 0} \int_0^T  \int_{\Oj} 
	\Phi\left(t,x,\frac{x}{\e}\right) \cdot
	\nabla u_j^{\e} \, dx\,dt 
	\\ & \qquad 
	=\int_0^T \int_{\O} \int_{Y_j} \sigma_j 
	\left[ \nabla_x u_j + \nabla_y u^1_j \right] 
	\cdot  \nabla_y \p_1 \,dx\, dy\, dt. 
\end{align*}
Note that the oscillating $\p_1$ term is suppressed in the limit of 
the weak formulation \eqref{weak} by the $\e$-factor, except 
in the term where a gradient hits the test function. 
Thus, the two-scale limit $(u_j,u_j^1)$ 
satisfies the equation
$$
\int_0^T \int_{\O}\int_{Y_j} 
\sigma_j \left[ \nabla_x u_j + \nabla_y u^1_j \right] 
\cdot  \nabla_y \p_1 \,dx \, dy\,dt =0, 
$$
for all $\p_1 \in C_0^{\infty}((0,T)\times\O;C_{\mathrm{per}}^{\infty}(Y))$.
This equation is satisfied by $u_j^1 = \chi_j \cdot \nabla_x u_j$, where 
$\chi_j$ is the first order corrector \eqref{chi}. Hence, for any 
$y$-independent function $\p_1(t,x,y)\equiv 
\psi(t,x) \in C_0^{\infty}((0,T)\times \O)$,
\begin{align*}
	&0=\int_0^T \int_{\O}\int_{Y_j} 
	\sigma_j \left[ \nabla_x u_j + \nabla_y u^1_j \right] 
	\cdot \nabla_x \psi \,dx \, dy\,dt
	\\ & \qquad = \int_0^T\int_{\O}  
	\left(\int_{Y_j} \sigma_j  + \nabla_y \chi_j \, dy \right) 
	\nabla_x u_j \cdot \nabla_x \psi \,dx\,dy\, dt,
\end{align*}
so \eqref{Mj} is indeed the homogenized conductivity tensor.
  
Up to the convergences of the nonlinear terms, it is now clear 
that \eqref{u_iLim}, \eqref{u_eLim}, and \eqref{wLim} 
constitute the weak formulation of 
the bidomain system  \eqref{macro}, \eqref{eq:ib-cond2}  
(in the sense of Definition \ref{weakMacro}).  

\subsection{The nonlinear terms and strong convergence}\label{subsec:strongconv}
To finalize the proof of Theorem \ref{theorem:homo}, 
it remains to identify the limits
\begin{equation}\label{limitIntegrals}
	\begin{split}
		\lim_{\e\rightarrow 0} 
		\e \int_0^T  \int_{\G^{\e}} I(v^{\e},w^{\e}) \p \,dS \,dt
		& = |\G| \int_0^T \int_{\O}  I(v,w) \p \, dx \, dt, \\
		 \lim_{\e\rightarrow 0} 
		 \e \int_0^T  \int_{\G^{\e}} H(v^{\e},w^{\e}) \p \,dS \,dt
		 & =  |\G| \int_0^T  \int_{\O}  H(v,w) \p \, dx \, dt,
	\end{split}
\end{equation}
for all $\p \in C^{\infty}_0((0,T)\times\O)$.  
We note that the unfolding operator \eqref{IntegralTransform} allows 
us to transform the oscillating surface integral
$$
\e \int_0^T \int_{\Ge} I(v^{\e},w^{\e})\p_i \,dS \, dt
$$
into 
$$
\int_0^T  \int_{\Omega}\int_{\G} 
I\left(\Tau_{\e}^b(v^{\e}),\Tau_{\e}^b(w^{\e})\right) 
\Tau_{\e}^b(\p_i) \,dS(y) \, dx\, dt,
$$
coming from the integration formula \eqref{IntegralTransform} 
for $\Tau_{\e}^b$. Additionally, we have here used \eqref{Tb-product} 
and the fact $\Tau_{\e}^b \left(I(v^{\e},w^{\e})\right)
=I\left(\Tau_{\e}^b(v^{\e}),\Tau_{\e}^b(w^{\e})\right)$. 
The smoothness of $\p$ implies that $\Tau_{\e}^b(\p)\to \p$ 
in $L^2((0,T)\times\O \times \G )$ as $\e\to 0$, 
cf.~\eqref{Tb-strongconv}, so to identify the 
limits \eqref{limitIntegrals} it suffices to show
$$
I\left(\Tau_{\e}^b(v^{\e}),\Tau_{\e}^b(w^{\e})\right) \to I(v,w), 
\quad  \text{weakly in $L^2((0,T)\times\O \times \G)$},
$$
where $v,w$ are identified in \eqref{convergence}. 
Besides, since $w^{\e}$ appears linearly in 
$I$ and $H$, cf.~\eqref{GFHN}, the weak convergence 
of $\seq{\Tau_{\e}^b(w^{\e})}_{\e>0}$ in 
$L^2((0,T)\times\O\times\G)$ is enough to pass to the 
limit in \eqref{limitIntegrals}, if we establish 
strong convergence of $\seq{\Tau_{\e}^b(v^{\e})}_{\e>0}$.

As a step towards verifying the required strong convergence, we 
need to show that $\seq{(t,y)\mapsto \Tau_{\e}^b (v^\e)}_{\e>0}$ 
is strongly precompact in $L^2_{t,y}$, for fixed $x\in \O$. 
As a result of Lemma \ref{lemma:energy}, $\Tau_{\e}^b (v)$ is 
bounded in $L^2_tL^2_xH^{1/2}_y$, uniformly in $\e$. 
However, according to Lemma \ref{lemma:energy}, 
the time derivative $\dt v^\e$ is merely of order 
$1/\e$ in the $L^2_tH^{-1/2}_x$ norm. Therefore, we cannot expect 
$\dt \Tau_{\e}^b(v^\e)$ (assuming that this object is meaningful) 
to be bounded in $L^2_tL^2_xH^{-1/2}_y$, 
uniformly in $\e$. As a consequence, strong $L^2_{t,y}$ compactness of 
$\seq{\Tau_{\e}^b (v^\e)}_{\e>0}$ is not deducible 
from the classical Aubin-Lions theorem. Instead of attempting 
to control (in a negative space) the whole derivative $\dt \Tau_{\e}^b(v^\e)$, we 
will make use of a temporal translation estimate with respect to 
the $L^2$ norm (cf.~lemma below). The $L^2_{t,y}$ compactness will then 
be a consequence of the Simon lemma (cf.~Subsection \ref{sec:notation}).

The rest of this section is devoted to the detailed proof 
that $\seq{\Tau_{\e}^b(v^{\e})}_{\e>0}$ is strongly 
precompact in the fixed space $L^2((0,T)\times \O\times\G)$. 
We begin with
\begin{lemma}\label{lem:Tb-temporal-spatial}
There exists a constant $C$, independent of $\e$, such that 
\begin{equation}\label{eq:TbH12}
	\norm{\Tau_{\e}^b (v^{\e})}_{L^2(0,T;L^2(\O;H^{1/2}(\G))}\le C
\end{equation}
and 
\begin{equation}\label{eq:Tb-temporal}
	\norm{\Tau_{\e}^b(v^\e)(\cdot+\Delta_t,\cdot,\cdot)
	-\Tau_{\e}^b(v^\e)(\cdot,\cdot,\cdot)}_{L^2(0,T-\Delta_t;L^2(\O\times \G))}
	\le C \Delta_t^{\frac12},
\end{equation}
for sufficiently small temporal shifts $\Delta_t>0$.
\end{lemma}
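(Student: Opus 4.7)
Both estimates follow by combining the structural identities for the boundary unfolding operator with the a priori bounds already established for the microscopic system. The underlying idea is that $\Tau_\e^b$ converts the scaled oscillating integrals on $\Ge$ into ordinary integrals on the fixed product domain $\O\times\G$, after which the $\e$-uniform estimates of Lemma \ref{lemma:energy} and Lemma \ref{lemma:temp-shift} apply directly.

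For the $H^{1/2}$-type bound \eqref{eq:TbH12}, the plan is to exploit the algebraic relation \eqref{ve-def}, so that $\Tau_{\e}^b(v^\e)=\Tau_{\e}^b(u_i^\e|_{\Ge})-\Tau_{\e}^b(u_e^\e|_{\Ge})$, and then apply the key pointwise-in-$t$ estimate \eqref{unfoldingH1/2} to each of $u_i^\e,u_e^\e$. This yields, for a.e.~$t\in(0,T)$,
\[
\norm{\Tau_{\e}^b(v^\e)}_{L^2(\O;H^{1/2}(\G))}^2
\le C\sum_{j=i,e}\left(\norm{u_j^\e}_{L^2(\Oj)}^2
+\e^2\norm{\nabla u_j^\e}_{L^2(\Oj)}^2\right).
\]
Integrating in time and invoking the $\e$-independent bounds \eqref{eq:main-est}-(a,b) (noting that the $\e^2$ prefactor makes the gradient contribution harmless) gives \eqref{eq:TbH12}.

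For the temporal translation estimate \eqref{eq:Tb-temporal}, the plan is to combine Lemma \ref{lemma:temp-shift} with the product property \eqref{Tb-product} and the integration formula \eqref{IntegralTransform}. Since time translation clearly commutes with the spatial operator $\Tau_{\e}^b$, setting $\Delta^{\e}(t,x):=v^\e(t+\Delta_t,x)-v^\e(t,x)$ one has $\Tau_{\e}^b(\Delta^\e)=\Tau_{\e}^b(v^\e)(\cdot+\Delta_t,\cdot,\cdot)-\Tau_{\e}^b(v^\e)$ and, by \eqref{Tb-product}, $\Tau_{\e}^b(|\Delta^\e|^2)=|\Tau_{\e}^b(\Delta^\e)|^2$. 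Applying \eqref{IntegralTransform} with $v=|\Delta^\e|^2$ then yields, for a.e.~$t$,
\[
\int_{\O}\int_{\G}\bigl|\Tau_{\e}^b(v^\e)(t+\Delta_t,x,y)-\Tau_{\e}^b(v^\e)(t,x,y)\bigr|^2\,dS(y)\,dx
=\e\int_{\Ge}\abs{\Delta^\e(t,x)}^2\,dS(x).
\]
Integrating over $t\in(0,T-\Delta_t)$ and inserting the bound \eqref{eq:ve-time-trans} gives the right-hand side $\le C\Delta_t$, which upon taking the square root is \eqref{eq:Tb-temporal}.

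The main (quite mild) obstacle is merely bookkeeping: ensuring that the product identity \eqref{Tb-product} and the integration formula \eqref{IntegralTransform} are applicable on sets of positive measure in time, so that the pointwise-in-$t$ identities can be integrated. Both facts are provided (for a.e.~$t$) in Section \ref{sec:unfolding}, so no new analytic machinery is required beyond what has already been recorded.
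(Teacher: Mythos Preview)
Your proof is correct and follows essentially the same route as the paper: both parts use the algebraic relation \eqref{ve-def} together with \eqref{unfoldingH1/2} and \eqref{eq:main-est}-(a,b) for \eqref{eq:TbH12}, and the unfolding norm identity combined with \eqref{eq:ve-time-trans} for \eqref{eq:Tb-temporal}. The only cosmetic difference is that for the second estimate the paper invokes \eqref{Tb-L2Bound} directly, whereas you re-derive that identity from \eqref{Tb-product} and \eqref{IntegralTransform}; the content is identical.
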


\begin{proof}
By \eqref{ve-def} and the linearity of $\Tau_\e^b(\cdot)$, we have 
$\Tau_{\e}^b (v^{\e})=\Tau_{\e}^b(u_i^{\e}|_{\Ge}) 
- \Tau_{\e}^b(u_e^{\e}|_{\Ge})$. Thus, \eqref{eq:TbH12} 
follows by integrating \eqref{unfoldingH1/2} 
over $(0,T)$ and using estimates (a), (b) in \eqref{eq:main-est}. 
Regarding \eqref{eq:Tb-temporal}, we use the linearity of 
$\Tau_{\e}^b$, \eqref{Tb-L2Bound}, and \eqref{eq:ve-time-trans} to obtain
\begin{align*}
	& \int_0^{T-\Delta_t} \int_\O \int_\G
	\abs{\Tau_{\e}^b(v^\e)(t+\Delta_t,x,y)
	-\Tau_{\e}^b(v^\e)(\cdot,\cdot,\cdot)}^2\, dS(y) \,dx\,dt
	\\ & \quad 
	= \e \int_0^{T-\Delta_t}\int_{\Ge} 
	\abs{v^\e(t+\Delta_t,x)-v^\e(t,x)}^2 \,dS(x) \,dt
	\le C \Delta_t.
\end{align*}
\end{proof}

Let us think of $\Tau_{\e}^b(v^\e)$ as a function 
of $x\in \O$, with values in $L^2(0,T;L^2(\G))$. 
Fixing $x$, in view of Lemma \ref{lem:Tb-temporal-spatial} 
and Simon's compactness criterion, the sequence 
$\seq{(t,y)\mapsto \Tau_{\e}^b(v^{\e})(t,x,y)}_{\e>0}$ is 
precompact in $L^2((0,T)\times \G)$. The $x$-variable is more difficult. 
As a matter of fact, since $\Tau_{\e}^b(v^{\e})$ is piecewise 
constant as a function of $x$ and thus does not belong to any Sobolev 
space, strong compactness in $x$ is not immediately clear. 
We address this issue by deriving a translation 
estimate in the $x$-variable. To be more precise, we make use of a convenient 
Simon-type compactness criterion ($x$ playing the role of time) established in 
\cite[Corollary 2.5]{Neuss} (see also \cite[Section 5]{Amann:2000aa}), 
which is recalled next.  

For $Q\subset \R^n$ and $\xi\in \R^n$ ($n\ge 1$), we 
set $Q_\xi:=Q \cap (Q-\xi):=\seq{x\in Q: x+\xi\in Q}$, 
$\Sigma:=\seq{-1,1}^n$ ($\abs{\Sigma}=2^n$), and 
$\xi_\sigma:=(\xi_1 \sigma_1,\ldots,\xi_n\sigma_n)\in \R^n$ 
for $\sigma\in \Sigma$. If $Q=(a,b)$ ($:=\Pi_{\ell=1}^n (a_\ell,b_\ell)$ for 
$a,b\in \R^n$, $a<b$) is an open rectangle, then 
\begin{equation}\label{Qxi-decomp}
	Q=\bigcup_{\sigma\in \Sigma} Q_{\xi_\sigma}, \qquad 
	\text{for any $\xi \in \R^n$ such that $Q_\xi \neq \emptyset$}.
\end{equation}
Let $B$ be Banach a space. 
For $f:Q\to B$ and $\Delta\in \R^n$, we define the 
translation operator $\tau_\Delta:(Q-\Delta)\to B$ by
$$
\tau_\Delta f(x)=f(x+\Delta).
$$
The following theorem, due to Gahn and Neuss-Radu \cite{Neuss}, 
is a multi-dimensional generalization of Simon's main 
result \cite[Theorem 1]{Simon:1987vn}.

\begin{theorem}[\cite{Neuss}]\label{compactSurface0}
Let $\mathcal{F} \subset L^p(Q;B)$ for some 
Banach space $B$, open rectangle $Q=(a,b)\subset \R^n$, and 
$p\in [1,\infty)$. Then $\mathcal{F}$ is 
precompact in $L^p(Q;B)$ if and only if

\begin{enumerate}[i.]
	\item $\seq{\int_A f \,dx \, \big | \, f \in \mathcal{F}}$ 
	is precompact in $B$, for every open rectangle $A\subset Q$;

	\item for each $z\in \R^n$ with $0<z<b-a$,
	\begin{equation}\label{shift-var-domain}
		\sup_{f \in \mathcal{F}} 
		\norm{\tau_z f - f }_{L^p(Q_z;B)}
		\to 0, \qquad \text{as $z \to 0$.}
	\end{equation}
\end{enumerate}
\end{theorem}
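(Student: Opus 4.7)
The plan is to prove both directions of this Kolmogorov--Riesz--Fr\'{e}chet type compactness criterion for Bochner $L^p$-functions with values in the Banach space $B$.

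\textbf{Necessity.} Suppose $\mathcal{F}$ is precompact in $L^p(Q;B)$. For any open rectangle $A\subset Q$, the integration map $f\mapsto \int_A f\,dx$ is a bounded linear operator from $L^p(Q;B)$ into $B$ (with norm $\le \abs{A}^{1-1/p}$), so it sends the precompact set $\mathcal{F}$ to a precompact subset of $B$, proving (i). For (ii), given $\eta>0$ I cover $\mathcal{F}$ by finitely many $L^p$-balls of radius $\eta/3$ around representatives $f_1,\dots,f_N$, approximate each $f_k$ by $g_k\in C_c(Q;B)$ in $L^p(Q;B)$-norm (using density of compactly supported continuous Bochner functions), and exploit uniform continuity of each $g_k$ to make $\norm{\tau_z g_k - g_k}_{L^p(Q_z;B)}$ small once $\abs{z}$ is small. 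The triangle inequality applied uniformly over $f\in\mathcal{F}$ then yields (ii).

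\textbf{Sufficiency.} This is the main work. For small $h>0$, pick an open sub-rectangle $Q^h \subset Q$ whose sidelengths are integer multiples of $h$ and whose complement in $Q$ has measure $O(h)$; partition $Q^h$ into disjoint $h$-cubes $\seq{A_k^h}_k$, and introduce the piecewise-constant averaging operator
\[
(R_h f)(x) = \frac{1}{\abs{A_k^h}}\int_{A_k^h} f(y)\,dy, \qquad x \in A_k^h,
\]
extended by zero on $Q\setminus Q^h$. The argument proceeds in two steps. \emph{Step 1:} show $R_h f\to f$ in $L^p(Q;B)$ \emph{uniformly} for $f\in\mathcal{F}$ as $h\to 0$. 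Inside each cube, Jensen's inequality gives $\norm{(R_h f - f)(x)}_B^p \le \frac{1}{\abs{A_k^h}}\int_{A_k^h}\norm{f(y)-f(x)}_B^p\,dy$; summing over $k$, applying Fubini, and changing variables $z=y-x$ bounds the total $L^p(Q^h;B)$-error by $2^{n/p}\sup_{\abs{z}_\infty\le h}\norm{\tau_z f - f}_{L^p(Q_z;B)}$, which tends to $0$ uniformly in $f\in\mathcal{F}$ by (ii). \emph{Step 2:} for fixed $h$ the family $\seq{R_h f : f\in\mathcal{F}}$ is precompact in $L^p(Q;B)$, since each $R_h f$ is determined by finitely many averages $\frac{1}{\abs{A_k^h}}\int_{A_k^h} f\,dy$, each lying in a precompact subset of $B$ by (i); the linear embedding of the (finite) product of these precompact sets into $L^p(Q;B)$ preserves precompactness. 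Combining the two steps yields total boundedness of $\mathcal{F}$ in the complete space $L^p(Q;B)$, whence its precompactness.

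\textbf{Main obstacle.} The genuine technical difficulty is the boundary geometry of $Q$: the cubes $A_k^h$ do not tile the open rectangle $Q=(a,b)$ exactly, and translates $\tau_z f$ are only defined on $Q_z\subsetneq Q$. I intend to handle the boundary layer $Q\setminus Q^h$ via the decomposition \eqref{Qxi-decomp}, which covers $Q$ by the $2^n$ shifted sub-rectangles $Q_{\xi_\sigma}$; the $L^p$-error of the piecewise-constant approximation then reduces to a finite sum of translation norms on these sub-rectangles, each made small uniformly in $f\in\mathcal{F}$ by (ii). A secondary point is that one must know $\mathcal{F}$ is bounded in $L^p(Q;B)$ in order to absorb the contribution from the thin boundary strip; this is not assumed directly, but it follows from Step 2 applied at a fixed coarse value of $h$, because (i) forces the averages, and therefore $R_h f$, to be uniformly bounded in $L^p(Q;B)$, after which Step 1 transfers the bound to $f$ itself.
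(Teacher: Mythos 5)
A preliminary remark: the paper does not prove Theorem \ref{compactSurface0} at all \EMDASH\ it is quoted from \cite{Neuss} \EMDASH\ so your proposal can only be assessed on its own merits, not against an in-paper argument. Your necessity direction is fine, and so is the interior part of the sufficiency argument: Step 2 is correct, and the Jensen--Fubini estimate of $\norm{R_h f - f}_{L^p(Q^h;B)}$ by $2^{n/p}\sup_{\abs{z}_\infty\le h}\norm{\tau_z f - f}_{L^p(Q_z;B)}$ is correct, modulo the routine reduction of mixed-sign shifts to the componentwise-positive shifts of condition (ii), which you reasonably delegate to \eqref{Qxi-decomp}. The genuine gap sits exactly where you locate the ``main obstacle'', and your proposed resolution does not close it. First, it is circular: to ``transfer'' the uniform bound from $R_h f$ to $f$ you invoke Step 1 on all of $Q$, but on $Q\setminus Q^h$ one has $R_h f=0$, so $\norm{f-R_h f}_{L^p(Q;B)}$ already contains the quantity $\norm{f}_{L^p(Q\setminus Q^h;B)}$ that you are trying to control; what the argument actually yields is only $\sup_{f\in\mathcal{F}}\norm{f}_{L^p(Q^h;B)}<\infty$ for each fixed $h$, i.e.\ boundedness on interior sub-rectangles. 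Second, even if global boundedness were known it would not suffice: a bounded family can carry a fixed amount of $L^p$-mass in every boundary strip (e.g.\ $f_k=k^{1/p}\mathbbm{1}_{(0,1/k)}$ in $L^p(0,1)$), so boundedness cannot ``absorb'' the strip. What Step 1 silently assumes, and what must be proved from (i)--(ii), is the uniform vanishing $\sup_{f\in\mathcal{F}}\norm{f}_{L^p(Q\setminus Q^h;B)}\to 0$ as $h\to 0$; such families as the example above are indeed excluded by (ii), but only through an argument you have not supplied.

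This missing lemma can be obtained, but by a different mechanism and with the quantifiers in the right order: given $\eta>0$, first use (ii) to fix a macroscopic shift scale $z_0$ such that $\sup_{f\in\mathcal{F}}\norm{\tau_z f-f}_{L^p(Q_z;B)}<\eta$ for all componentwise shifts $0<z\le 2z_0$; then, for $x$ in the boundary strip of width $h\ll z_0$, write $\norm{f(x)}_B\le\norm{f(x+z)-f(x)}_B+\norm{f(x+z)}_B$ and average over inward shifts $z$ of size comparable to $z_0$. The first contribution is uniformly $<\eta$; the second, after Fubini, is of order $(h/z_0)^{1/p}\norm{f}_{L^p(Q';B)}$ with $Q'$ a fixed interior rectangle, and it tends to $0$ as $h\to 0$ precisely because of the interior boundedness your circular step does legitimately provide. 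Only with this additional boundary-layer estimate does your two-step scheme give total boundedness of $\mathcal{F}$ in $L^p(Q;B)$. As it stands, the proposal proves precompactness in $L^p(Q';B)$ for every rectangle $Q'$ compactly contained in $Q$, but not in $L^p(Q;B)$, which is the content of the theorem.
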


Recall \eqref{Qxi-decomp}, this time specifying $Q=(a,b)\subset \R^n$, 
and $\xi=(b-a)/2\in \R^n$. Condition \eqref{shift-var-domain} 
in Theorem \ref{compactSurface0} is equivalent to \cite{Neuss}
\begin{equation}\label{shift-fixed-domain}
	\sup_{f \in \mathcal{F}} 
	\norm{\tau_{z_\sigma} f - f }_{L^p(Q_{\xi_\sigma};B)}
	\overset{z\to 0}{\to} 0, \qquad \text{$z\in \R^n$, $z\ge 0$, 
	$\forall \sigma\in \Sigma$.}		
\end{equation}
The difference between \eqref{shift-var-domain} 
and \eqref{shift-fixed-domain} is the 
fixed domain that is utilized in the latter 
(it does not depend on the shift $z$). 
We make use of \eqref{shift-fixed-domain} in the proof 
of Lemma \ref{lem:ver-ii} below.

We now verify that the sequence 
$\seq{\Tau_{\e}^b(v^{\e})}_{\e>0}$ of 
unfolded membrane potentials satisfies the assumptions 
of Theorem \ref{compactSurface0}, with 
$B= L^2((0,T)\times \G)$, $Q = \O$, $p=2$.

\begin{lemma}[verification of $i$]\label{lem:ver-i}
Given an arbitrary open rectangle $A\subset \O$, define 
the function $v^{\e}_A(t,y)$ by
$$
v^{\e}_A(t,y) 
=\int_A \Tau_{\e}^b(v^\e)(t,x,y)\,dx, 
\qquad (t,x)\in (0,T)\times \G.
$$
Then the sequence $\seq{v^\e_A}_{\e>0}$ is 
precompact in $L^2((0,T)\times \G)$.  
\end{lemma}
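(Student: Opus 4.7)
The plan is to apply Simon's compactness lemma (as stated after \eqref{cont-embeding} in the paper, namely \cite[Theorem 3]{Simon:1987vn}) with $p=2$, $X_1 = H^{1/2}(\G)$, and $X_0 = L^2(\G)$. Recall that $H^{1/2}(\G)\hookrightarrow L^2(\G)$ compactly (see the discussion after \eqref{SobolevInequality}). Thus it suffices to verify that $\{v^\e_A\}_{\e>0}$ is bounded in $L^2(0,T;H^{1/2}(\G))$ and satisfies a uniform $L^2_t L^2_y$ temporal translation estimate.

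For the boundedness, view $v^\e_A(t,\cdot) = \int_A \Tau_\e^b(v^\e)(t,x,\cdot)\,dx$ as a Bochner integral with values in $H^{1/2}(\G)$. By Minkowski's integral inequality followed by Cauchy-Schwarz in $x\in A$,
\begin{equation*}
\norm{v^\e_A(t,\cdot)}_{H^{1/2}(\G)}^2
\le \abs{A} \int_A \norm{\Tau_\e^b(v^\e)(t,x,\cdot)}_{H^{1/2}(\G)}^2\, dx.
\end{equation*}
Integrating over $t\in (0,T)$ and invoking \eqref{eq:TbH12} yields
$\norm{v^\e_A}_{L^2(0,T;H^{1/2}(\G))} \le C \abs{A}^{1/2}$, uniformly in $\e$.

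For the translation estimate, the same Minkowski argument applied with the $L^2(\G)$ norm gives, for any sufficiently small $\Delta_t>0$,
\begin{equation*}
\norm{v^\e_A(\cdot+\Delta_t,\cdot)-v^\e_A(\cdot,\cdot)}_{L^2(0,T-\Delta_t;L^2(\G))}^2
\le \abs{A}\, \norm{\Tau_\e^b(v^\e)(\cdot+\Delta_t,\cdot,\cdot)
-\Tau_\e^b(v^\e)(\cdot,\cdot,\cdot)}_{L^2(0,T-\Delta_t;L^2(\O\times \G))}^2,
\end{equation*}
which by \eqref{eq:Tb-temporal} is bounded by $C \abs{A}\,\Delta_t$, uniformly in $\e$. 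In particular, this tends to $0$ as $\Delta_t\to 0$ uniformly in $\e$. Simon's compactness lemma then yields that $\{v^\e_A\}_{\e>0}$ is precompact in $L^2(0,T;L^2(\G)) = L^2((0,T)\times \G)$, which is the desired conclusion. The only ingredients are Minkowski's integral inequality and the estimates already established in Lemma \ref{lem:Tb-temporal-spatial}; there is no genuine obstacle, since the averaging over $A$ preserves both the $H^{1/2}_y$-regularity and the $L^2_{t,y}$-equicontinuity in time inherited from $\Tau_\e^b(v^\e)$.
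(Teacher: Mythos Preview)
Your proof is correct and follows essentially the same approach as the paper's own proof: both establish the $L^2(0,T;H^{1/2}(\Gamma))$ bound and the temporal $L^2$ translation estimate for $v^\e_A$ by pulling the norm inside the $x$-integral over $A$, invoke the estimates \eqref{eq:TbH12} and \eqref{eq:Tb-temporal} from Lemma~\ref{lem:Tb-temporal-spatial}, and then appeal to Simon's compactness criterion. The only cosmetic difference is that the paper phrases the key inequality as an application of Jensen's inequality, whereas you state it as Minkowski's integral inequality followed by Cauchy--Schwarz; these yield the same bound.
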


\begin{proof}
In view of Jensen's inequality, it follows that
\begin{align*}
	\norm{v_A^{\e}}_{L^2(0,T;H^{1/2}(\G))}^2 
	& = \int_0^T
	\norm{\int_A \Tau_{\e}^b(v^{\e})(t,x,\cdot) \,dx}_{H^{1/2}(\G)}^2 \, dt 
	\\ & \leq \abs{A}\int_0^T\int_A
	\norm{\Tau_{\e}^b (v^{\e})(t,x,\cdot)}^2_{H^{1/2}(\G)} \,dx\, dt 
	\\ & \le \abs{A}\norm{\Tau_{\e}^b (v^\e)}_{L^2(0,T;L^2(\O;H^{1/2}(\Ge))}^2
	\overset{\eqref{eq:TbH12}}{\le} C.
\end{align*}
Let $\Delta_t>0$ be a small temporal shift. 
Again using Jensen's inequality,
\begin{align*}
	& \norm{v_A^{\e}(\cdot+\Delta_t,\cdot)
	-v_A^{\e}(\cdot,\cdot}_{L^2(0,T-\Delta_T;L^2(\G))}^2
	\\ & \quad
	\leq \abs{A}\int_0^{T-\Delta_t}\int_A 
	\norm{\Tau_{\e}^b (v^{\e})(t+\Delta_t,x,\cdot)-
	\Tau_{\e}^b (v^{\e})(t,x,\cdot)}^2_{L^2(\G)} \,dx\, dt 
	\\ & \quad
	\le \abs{A}\norm{\Tau_{\e}^b (v^{\e})(\cdot+\Delta_t,\cdot,\cdot)-
	\Tau_{\e}^b (v^{\e})(\cdot,\cdot,\cdot)}^2_{L^2(0,T-\Delta_t,L^2(\O\times\G))}
	\overset{\eqref{eq:Tb-temporal}}{\le} C \Delta_t.
\end{align*}

Summarizing, there exists an $\e$-independent constant $C$ such that
$$
\norm{v_A^{\e}}_{L^2(0,T;H^{1/2}(\G))}\le C, \quad
\norm{v_A^{\e}(\cdot+\Delta_t,\cdot)
-v_A^{\e}(\cdot,\cdot}_{L^2(0,T-\Delta_T;L^2(\G))}
\le C \Delta_t^{1/2}.
$$
The lemma follows from these estimates and Simon's compactness criterion.
\end{proof}

In the next lemma we verify \eqref{shift-fixed-domain} with 
$\xi=(1/2,1/2,1/2)\in \R^3$, which is equivalent to 
condition $ii$ in Theorem \ref{compactSurface0}. 

\begin{lemma}[verification of $ii$]\label{lem:ver-ii}
Given any $\delta>0$, there exists $h>0$ such that 
for any $\Delta_x\in \R^3$ with $\abs{\Delta_x}<h$ 
and for all $\e\in (0,1]$,
\begin{equation}\label{eq:Tb-xtranslate}
	\norm{\Tau_{\e}^b(v^\e)(\cdot,\cdot+(\Delta_x)_\sigma,\cdot)
	-\Tau_{\e}^b(v^\e)(\cdot,\cdot,\cdot)}_{L^2(\O_{\xi_\sigma};
	L^2((0,T)\times \G))} < \delta, 
	\quad \forall \sigma\in \Sigma.
\end{equation}
\end{lemma}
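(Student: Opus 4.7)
The overall plan is to reduce the spatial translation estimate for $\Tau_{\e}^b(v^{\e})$ on the oscillating surface to a bulk estimate for $\Tau_{\e}^j(u_j^{\e})$, $j=i,e$, and then handle the piecewise-constant-in-$x$ behaviour of the unfolded functions by smoothing via the interpolation operator $Q_{\e}^j$ from \eqref{Qdefinition}. From $v^{\e} = u_i^{\e}|_{\Ge} - u_e^{\e}|_{\Ge}$ and the identity \eqref{unfoldingTrace}, one has $\Tau_{\e}^b(v^{\e}) = \Tau_{\e}^i(u_i^{\e})|_{\G} - \Tau_{\e}^e(u_e^{\e})|_{\G}$, so by the triangle inequality it suffices to bound the spatial translations of $\Tau_{\e}^j(u_j^{\e})|_{\G}$, $j=i,e$, in $L^2((0,T)\times \O_{\xi_\sigma}\times\G)$. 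Applying the trace inequality $\norm{w|_{\G}}_{L^2(\G)}^2 \le C(\norm{w}_{L^2(Y_j)}^2 + \norm{\nabla_y w}_{L^2(Y_j)}^2)$ pointwise in $(t,x)$ reduces this further. The $\nabla_y$ contribution is immediately $O(\e)$: since $\nabla_y \Tau_{\e}^j = \e\Tau_{\e}^j(\nabla\cdot)$ and $\norm{\Tau_{\e}^j(\nabla u_j^{\e})}_{L^2(\O\times Y_j)} = \norm{\nabla u_j^{\e}}_{L^2(\Oj)}$ by \eqref{LpBound2}, that piece of the translation is bounded by $2\e\norm{\nabla u_j^{\e}}_{L^2((0,T)\times\Oj)} \le C\e$ in view of \eqref{eq:main-est}(a).

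The core task is therefore the bulk $L^2$-translation estimate
\begin{equation*}
\norm{\Tau_{\e}^j(u_j^{\e})(\cdot,\cdot+(\Delta_x)_\sigma,\cdot) - \Tau_{\e}^j(u_j^{\e})}_{L^2((0,T)\times\O_{\xi_\sigma}\times Y_j)} \le C\bigl(\e + \abs{\Delta_x}\bigr).
\end{equation*}
To derive it, I decompose
\begin{equation*}
\Tau_{\e}^j(u_j^{\e}) = Q_{\e}^j(u_j^{\e}) + \bigl(\Tau_{\e}^j(u_j^{\e}) - Q_{\e}^j(u_j^{\e})\bigr).
\end{equation*}
Since $Q_{\e}^j(u_j^{\e}) \in L^2(0,T;H^1(\O))$ with $\norm{\nabla Q_{\e}^j(u_j^{\e})}_{L^2} \le C$ by \eqref{Qestimates} and \eqref{eq:main-est}(a), the classical $H^1$-translation inequality yields a contribution bounded by $C\abs{\Delta_x}$. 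For the second summand I invoke the standard unfolding commutator estimate from \cite{Cioranescu},
\begin{equation*}
\norm{\Tau_{\e}^j(u_j^{\e}) - Q_{\e}^j(u_j^{\e})}_{L^2((0,T)\times\O\times Y_j)} \le C\e\norm{\nabla u_j^{\e}}_{L^2((0,T)\times\Oj)} \le C\e,
\end{equation*}
which effectively absorbs the jumps of $\Tau_{\e}^j(u_j^{\e})$ across cell boundaries. Combining the two bounds gives the claimed $C(\e+\abs{\Delta_x})$ estimate.

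Uniformity in $\e\in(0,1]$ is then obtained by a separation of scales. Given $\delta>0$, first pick $h>0$ such that $2Ch<\delta/2$, which handles all $\e$ with $\e<h$. Since $\e$ ranges over a sequence converging to $0$, only finitely many values $\e_1,\dots,\e_N$ satisfy $\e_k\ge h$, and for each such $\e_k$ the single function $\Tau_{\e_k}^b(v^{\e_k}) \in L^2((0,T)\times\O\times\G)$ admits its own spatial translation continuity radius $h_k$ by density of smooth functions in $L^2$. Replacing $h$ by $\min\{h,h_1,\dots,h_N\}$ produces the value required in \eqref{eq:Tb-xtranslate}. The principal obstacle is the commutator estimate $\Tau_{\e}^j(u_j^{\e})-Q_{\e}^j(u_j^{\e})=O(\e)$ in $L^2$: this is not explicitly recorded earlier in the paper, but it is a well-known by-product of the $Q_1$-interpolation construction of $Q_{\e}^j$ from the cell averages $\mathcal{M}_{\e}^j$, and can alternatively be derived via a uniform $H^1$-extension operator from $\Oj$ to $\O$ combined with the elementary pointwise bound $\abs{u(\e\lfloor x/\e\rfloor+\e y)-u(x)}\le C\e\abs{\nabla u}$ for smooth $u$.
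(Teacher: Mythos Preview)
Your argument is correct and reaches the same endpoint $J \le C(\e + \abs{\Delta_x})$ followed by the finitely-many-large-$\e$ cleanup, but the route differs from the paper's. The paper unpacks the definition of $\Tau_\e^b$ and, via a combinatorial decomposition of each cell $\e Y^k$ into sub-cells $\e Y^{k,m}_\sigma$ indexed by where $\lfloor (x+(\Delta_x)_\sigma)/\e\rfloor$ lands, converts the $x$-translation of $\Tau_\e^b(v^\e)$ back into a shift of $v^\e$ on the physical surface $\Ge$ by an integer multiple of $\e$; it then applies the $\e$-scaled trace inequality \eqref{trace} to pass to $(\Oj)_{\xi_\sigma}$ and uses $Q_\e^j$ on $\Oj$ via \eqref{Qestimates} to control the bulk translation there. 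You instead remain entirely on the unfolded space: the fixed-domain trace on $Y_j$ replaces \eqref{trace}, and the $x$-translation of $\Tau_\e^j(u_j^\e)$ is handled by splitting off the $y$-independent smooth part $Q_\e^j(u_j^\e)$ and bounding the remainder by the commutator estimate $\norm{\Tau_\e^j(u_j^\e)-Q_\e^j(u_j^\e)}_{L^2(\O\times Y_j)}\le C\e\norm{\nabla u_j^\e}_{L^2(\Oj)}$. Your path is shorter and avoids the cell-decomposition bookkeeping, at the price of importing one unfolding estimate not explicitly stated in the paper (though easily derived from \eqref{Qestimates}, \eqref{LpBound2}, \eqref{unfoldingH1} and Poincar\'e on $Y_j$, or as you note via an $H^1$ extension); the paper's version is more self-contained in that it only invokes \eqref{Qestimates} as written.
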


\begin{proof}
We wish to estimate the quantity 
\begin{align*}
	& J(\Delta_x;\e)
	:= \norm{\Tau_{\e}^b(v^\e)(\cdot,\cdot+(\Delta_x)_\sigma,\cdot)
	-\Tau_{\e}^b(v^\e)(\cdot,\cdot,\cdot)}_{L^2(\O_{\xi_\sigma};
	L^2((0,T)\times \G))}^2
	\\ & \quad = \int_0^T\!\!\int_{\O_{\xi_\sigma}}\int_\G
	\abs{v^\e\left(t,\e\left\lfloor \frac{x+(\Delta_x)_\sigma}{\e}
	\right\rfloor +\e y\right)
	-v^\e\left(t,\e\left\lfloor \frac{x}{\e}
	\right\rfloor +\e y\right)}^2\, dS(y)\, dx\, dt.
\end{align*}
Recall that $\e$ takes values in a sequence $\subset (0,1]$ 
tending to zero. Fix any $\e_0>0$. Since the translation 
operation is continuous in $L^2$, there exists $h_0=h_0(\e_0)>0$ 
such that $J(\Delta_x;\e)<\delta$ for any $\abs{\Delta_x}<h_0$, 
for all $\e\in [\e_0,1]$. The rest of the proof is devoted 
to arguing that this holds also for $\e\in (0,\e_0)$, 
thereby proving \eqref{eq:Tb-xtranslate}.
 
Choose $K^\e_{\xi_\sigma}\subset \Z^3$ such that
$\O_{\xi_\sigma}
=\text{interior}\left(\bigcup_{k\in K^\e_{\xi_\sigma}} 
\overline{\e Y^k}\right), \qquad Y^k := k+Y$. 
Then $J(\Delta_x,\e)$ becomes
$$
\sum_{k\in K^\e_{\xi_\sigma}} 
\int_0^T\!\!\int_{\e Y^k}\int_\G
\abs{v^\e\left(t,\e\left\lfloor \frac{x+(\Delta_x)_\sigma}{\e}
\right\rfloor +\e y\right)
-v^\e\left(t,\e\left\lfloor \frac{x}{\e}
	\right\rfloor +\e y\right)}^2	\, dS(y)\, dx\, dt.
$$
If $x\in \e Y^k$, then $\left\lfloor \frac{x}{\e}\right\rfloor=k$, 
but we have no useful information about 
$\left\lfloor \frac{x+(\Delta_x)_\sigma}{\e}\right\rfloor$.
To address this issue, we make use of a favorable 
decomposition of the cells $\e Y^k$ proposed 
in \cite{Neuss-Radu:2007aa} (and also utilized 
in e.g.~\cite{Neuss,Gahn:2016aa}). 

We decompose each cell $\e Y^k$ as
$$
\e Y^k=\bigcup_{m\in \seq{0,1}^3}\e Y^{k,m}_\sigma,
\quad
\e Y^{k,m}_\sigma := \seq{x\in \e Y^k:
\e\left\lfloor \frac{x+\e\seq{\frac{(\Delta_x)_\sigma}{\e}}}{\e}\right\rfloor
=\e (k+m_\sigma)},
$$
for $k\in K^\e_{\xi_\sigma}$ and $\sigma\in\Sigma$. 
Regarding the translation, for $x\in \e Y^{k,m}_\sigma$, we write
$(\Delta_x)_\sigma=\e\left\lfloor \frac{(\Delta_x)_\sigma}{\e}\right\rfloor
+\e \seq{\frac{(\Delta_x)_\sigma}{\e}}$, and note that
\begin{align*}
	\e\left\lfloor \frac{x+(\Delta_x)_\sigma}{\e}\right\rfloor
	& =\e\left\lfloor \frac{x+\e \seq{\frac{(\Delta_x)_\sigma}{\e}}}{\e}
	+\left\lfloor \frac{(\Delta_x)_\sigma}{\e}\right\rfloor\right\rfloor
	\\ &
	=\e\left\lfloor \frac{x+\e \seq{\frac{(\Delta_x)_\sigma}{\e}}}{\e}\right\rfloor
	+\e\left\lfloor \frac{(\Delta_x)_\sigma}{\e}\right\rfloor
	\\ & 
	= \e (k+m_\sigma)
	+\e\left\lfloor \frac{(\Delta_x)_\sigma}{\e}\right\rfloor.
\end{align*}
As a result of this,
\begin{align*}
	J & =\sum_{k\in K^\e_{\xi_\sigma}}
	\sum_{m\in \seq{0,1}^3} 
	\int_0^T \int_{\e Y^{k,m}_\sigma}\int_\G
	\\ & \qquad\qquad 
	\times \abs{v^\e\left(t,\e k+\e m_\sigma
	+\e\left\lfloor \frac{(\Delta_x)_\sigma}{\e}\right\rfloor +\e y\right)
	-v^\e(t,\e k +\e y)}^2 \, dS(y)\, dx\, dt
	\\ & 
	\overset{\left(\overset{x:=\e k + \e y}{dS(x)=\e^2 dS(y)}\right)}{\le}
	\e \sum_{k\in K^\e_{\xi_\sigma}}
	\sum_{m\in \seq{0,1}^3}\int_0^T\int_{\e(k+\G)}
	\\ & \qquad\qquad\qquad\qquad\qquad 
	\times \abs{v^\e\left(t,x+\e\left(m_\sigma
	+\left\lfloor \frac{(\Delta_x)_\sigma}{\e}\right\rfloor\right)\right)
	-v^\e(t,x)}^2 \, dS(x) \, dt,
\end{align*}
where we have also used $\int_{\e Y^{k,m}_\sigma}\, dx 
\le \int_{\e Y^k} \,dx=\e^3$ to 
arrive at the final line. Since 
$\sum_{k\in K^\e_{\xi_\sigma}}\int_{\e(k+\G)}
=\int_{(\Ge)_{\xi_\sigma}}$, we conclude that
$$
J\le \e \sum_{m\in \seq{0,1}^3}\int_0^T\int_{(\Ge)_{\xi_\sigma}}
\abs{v^\e(t,x+z)-v^\e(t,x}^2 \, dS(x) \, dt,
$$
where the shift $z=z(\Delta_x,\e,m)$ is $\e\left(m_\sigma
+\left\lfloor \frac{(\Delta_x)_\sigma}{\e}\right\rfloor\right)$, i.e., 
$z$ is an integer-multiple of $\e$. Note that $x+z\in (\Ge)_{\xi_\sigma}$ whenever  
$\Delta_x$ and $\e$ are sufficiently small. 
Recalling the definition \eqref{ve-def} of $v^\e$, 
the trace inequality \eqref{trace} implies
\begin{align*}
	&\e \int_0^T\int_{(\Ge)_{\xi_\sigma}}
	\abs{v^\e(t,x+z)-v^\e(t,x}^2 \, dS(x) \, dt 
	\\ & \qquad 
	\leq C \sum_{j=i,e} 
	\int_0^T \int_{(\Oj)_{\xi_\sigma}}
	\abs{u_j^\e(t,x+z) -u_j^\e(t,x)}^2 \,dx \, dt 
	\\ & \qquad \qquad 
	+ C \e^2 \sum_{j=i,e} 
	\int_0^T\int_{(\Oj){\xi_\sigma}}
	\abs{\nabla u_j^\e(t,x+z)-\nabla u_j^\e(t,x)}^2 \,dx\, dt,
\end{align*}
where the last term is bounded by a constant times $\e^2$ 
because of \eqref{eq:main-est}-(a). 

It remains to estimate the term on the second line, 
which will be done utilizing the well-known characterization of 
Sobolev spaces by means of translation (difference) operators. 
Recalling the standard proof of this characterization, a problem 
that arises (due to the geometry of $\Oj$) is that parts of the 
line segment between $x$ and $z$ may leave $\Oj$. 
To avoid this problem we make use of the interpolation 
operators \eqref{Qdefinition} to obtain functions 
$Q_\e^j(u_j^\e)$ defined on the whole of $\Omega$. 	

Using the triangle inequality and 
recalling the estimates \eqref{Qestimates}, we obtain
\begin{align*}
	&\int_0^T \int_{(\Oj)_{\xi_\sigma}} 
	\abs{u_j^\e(t,x+z) -u_j^\e(t,x)}^2 \,dx \, dt 
	\\ & \quad \leq \int_0^T \int_{(\Oj)_{\xi_\sigma}}
	\abs{u_j^\e(t,x+z)-Q_\e^j(u_j^\e)(t,x+z)}^2 \,dx \, dt 
	\\ & \quad\qquad\qquad
	+\int_0^T \int_{(\Oj)_{\xi_\sigma}}
	\abs{Q_\e^j(u_j^\e)(t,x+z)-Q_\e^j(u_j^\e)(t,x)}^2 \,dx \, dt 
	\\ & \quad\qquad\quad\qquad
	+\int_0^T \int_{(\Oj)_{\xi_\sigma}}
	\abs{Q_\e^j(u_j^\e)(t,x)-u_j^\e(t,x)}^2 \,dx\, dt 
	\\ & \quad
	\leq C_1 \e \int_0^T \int_{(\Oj)_{\xi_\sigma}}\abs{\nabla u_j^\e}^2 \,dx \, dt  
	+ C_2 \abs{z}\int_0^T 
	\int_{(\Oj)_{\xi_\sigma}}\abs{\nabla Q_\e^j(u_j^\e)}^2 \,dx \, dt 
	\\ & \quad 
	\leq C_3\bigl( \e + \abs{z}\bigr)
	\norm{\nabla u_j^\e}^2_{L^2(0,T;L^2(\Oj))}
	\overset{\eqref{eq:main-est}-(a)}{\le} 
	C_4 \bigl( \e + \abs{z}\bigr).
\end{align*}

Hence, 
$$
J \le C_5 \e + C_6 \abs{\Delta_x},
$$
where the constants $C_5,C_6$ are independent of $\Delta_x,\e$.
We select the $\e_0$ introduced earlier sufficiently small, such 
that the first term on the right-hand side is $<\delta^2/2$ for 
all $\e<\e_0$. We pick $h_1>0$ such that the second term is 
$<\delta^2/2$ for all $\abs{\Delta_x}<h_1$ (for any $\e\in (0,1]$). 
Specifying $h:=\min(h_0,h_1)$, the 
claim \eqref{eq:Tb-xtranslate} now follows.
\end{proof}

\subsection{Concluding the proof of Theorem \ref{theorem:homo}}\label{sec:concl}
Summarizing, we know that 
$$
\Tau_{\e}^b(w^{\e})\overset{\e\downarrow 0}{\weak} w 
\quad \text{in $L^2((0,T)\times \O\times\G)$}, 
$$
because $w^{\e}\overset{2-\mathrm{S}}{\rightharpoonup} w$, 
cf.~\eqref{convergence} and \eqref{weak-vs-twoscale}. 
Besides, $w^{\e}$ appears linearly in $I$ and $H$, cf.~\eqref{GFHN}. 
We have shown that $\seq{\Tau_{\e}^b(v^{\e})}_{\e>0}$ 
is strongly precompact in $L^2((0,T)\times \O\times\G)$. 
It then follows that $\Tau_{\e}^b(v^{\e})\to v$ 
in $L^2((0,T)\times \O\times\G)$ and a.e.~in $(0,T)\times \O\times\G$, 
along a subsequence as $\e\to 0$ (not relabelled), where $v$ is 
the two-scale limit of $\seq{v^{\e}}_{\e>0}$ 
identified in \eqref{convergence}. By way of estimate (d) in 
\eqref{eq:main-est} and (the $L^p$ version of) \eqref{Tb-L2Bound}, 
$$
\norm{\Tau_{\e}^b (v^{\e})}_{L^4((0,T)\times\O\times \G)}
= \e^{1/4}\norm{v^{\e}}_{L^4((0,T)\times\Ge)}\le C,
$$
where $C$ is independent of $\e$. In view of this estimate and 
the Vitali convergence theorem, we conclude 
the validity of \eqref{limitIntegrals}. This finishes the 
proof of Theorem \ref{theorem:homo}.  

\bibliographystyle{abbrv}

\end{document}